\documentclass[a4paper, 12pt, oneside, notitlepage]{amsart}
\usepackage[margin=3cm]{geometry}
\usepackage{amsmath,amssymb,amsthm,graphicx,mathrsfs,bbm,url}
\usepackage{amsthm}
\usepackage{wrapfig}
\usepackage{enumitem}
\usepackage{mathtools}
\usepackage[utf8]{inputenc}
 \usepackage[T1]{fontenc}
\usepackage[usenames,dvipsnames]{color}
\usepackage[colorlinks=true,linkcolor=Red,citecolor=Green]{hyperref}
\usepackage[super]{nth}
\usepackage[open, openlevel=2, depth=3, atend]{bookmark}
\hypersetup{pdfstartview=XYZ}
\usepackage[font=footnotesize]{caption}
\usepackage{a4wide}
\usepackage{tikz-cd}

\usepackage{epstopdf}
\usepackage{hyperref}

\theoremstyle{plain}
\newtheorem{theorem}{Theorem}[section]
\newtheorem*{theorem*}{Theorem}

\newtheorem{lemma}[theorem]{Lemma}
\newtheorem{proposition}[theorem]{Proposition}
\newtheorem{corollary}[theorem]{Corollary}

\newtheorem{theoremABC}{Theorem}

\newtheorem*{question}{Question}
\theoremstyle{definition}
\newtheorem{definition}[theorem]{Definition}

\theoremstyle{remark}
\newtheorem{remark}[theorem]{Remark}

\numberwithin{equation}{section}

\title
[Stability and uniqueness of minimal disks in non-constant curvature]
{Stability and uniqueness of minimal disks in non-constant curvature}

\author[Alvarez]{S\'ebastien Alvarez}
\address{CMAT, Facultad de Ciencias, Universidad de la Rep\'ublica, \& IRL-IFUMI (CNRS)\\
	Igu\'a 4225 esq. Mataojo. Montevideo, Uruguay.\\}
\email{salvarez@cmat.edu.uy}

\author[Lefeuvre]{Thibault Lefeuvre}
\address{Université Paris-Saclay, CNRS, Laboratoire de mathématiques d’Orsay, 91405, Orsay, France.}
\email{thibault.lefeuvre1@universite-paris-saclay.fr}

\author[Lowe]{Ben Lowe}
\address{University of Chicago, Department of Mathematics, Chicago IL 60637, USA}
\email{loweb24@gmail.com}

\author[Smith]{Graham A. Smith}
\address{Departamento de Matemática, PUC-Rio, Rua Marquês de São Vicente, 225, Rio de Janeiro, 22451-900, RJ, Brazil.}
\email{grahamandrewsmith@gmail.com}

\catcode`\@=11
\def\eqalign#1{\null\,\vcenter{\openup1\jot \m@th %
\ialign{\strut\hfil$\displaystyle{##}\quad$&$\displaystyle{{}##}$\hfil\crcr#1\crcr}}\,}
\def\triplealign#1{\null\,\vcenter{\openup1\jot \m@th %
\ialign{\strut\hfil$\displaystyle{##}\quad$&$\displaystyle{{}##}$\hfil&$\displaystyle{{}##}$\hfil\crcr#1\crcr}}\,}
\def\multiline#1{\null\,\vcenter{\openup1\jot \m@th %
\ialign{\strut$\displaystyle{##}$\hfil&$\displaystyle{{}##}$\hfil\crcr#1\crcr}}\,}
\catcode`\@=12

\newcommand{\closedthreeball}{\overline{\mathbb{B}}\vphantom{\mathbb{B}}^3}
\newcommand{\C}{\mathbb{C}}

\newcommand{\Spec}{\text{{\rm Spec}}}
\newcommand{\Id}{\text{{\rm Id}}}
\newcommand{\sect}{\text{{\rm sect}}}

\newcommand{\tr}{\text{{\rm tr}}}

\newcommand{\opD}{{\text{{\rm D}}}}

\newcommand{\opSupp}{\text{{\rm Supp}}}
\newcommand{\opLength}{\text{{\rm Length}}}

\newcommand{\opR}{{\text{{\rm R}}}}
\newcommand{\opT}{{\text{{\rm T}}}}
\newcommand{\opTr}{\text{{\rm Tr}}}

\newcommand{\overlineint}{{\text{{\rm I}}}\overline{\text{{\rm nt}}}}
\newcommand{\overlineR}{\overline{\text{{\rm R}}}}
\newcommand{\overlineRic}{\text{{\rm R}}\overline{\text{{\rm\i c}}}}

\newcommand{\opU}{{\text{{\rm U}}}}

\newcommand{\opArea}{{\text{{\rm Area}}}}

\newcommand{\sphere}{{\mathbb{S}}}

\newcommand{\opMI}{{\mathrm{MI}}}

\newcommand{\opDeg}{{\mathrm{Deg}}}

\newcommand{\dd}{\mathrm{d}}

\newif\ifshowcomments
\showcommentstrue
\def\gs#1{\ifshowcomments{\textcolor{red}{GS: #1}}\fi}
\def\sa#1{\ifshowcomments{\textcolor{blue}{SA: #1}}\fi}

\def\tl#1{\ifshowcomments{\textcolor{teal}{TL: #1}}\fi}
\def\m#1{\ifshowcomments{\textcolor{Mahogany}{ #1}}\fi}
\begin{document}


\begin{abstract}
Nitsche proved that every smooth Jordan curve in $\mathbb{R}^3$ of total
curvature at most $4\pi$ bounds a unique minimal disk, which is moreover
strictly stable. We prove an analogue of this result for Riemannian
$3$-balls with mean convex boundary, under an explicit pinching condition on
the negative sectional curvature, together with a bound on the covariant
derivative of the Ricci tensor. In this setting, every smooth Jordan curve
in the boundary sphere of total curvature at most $4\pi$ bounds a unique embedded
minimal disk which is strictly stable.
\end{abstract}

\maketitle

\section{Introduction}

\subsection{Context}
In this paper we investigate uniqueness and stability of immersed minimal disks with prescribed boundary in Riemannian $3$-manifolds. A fundamental result in this direction was established by Nitsche \cite{Nitsche1973}, who proved that \emph{every smooth Jordan curve in $\mathbb{R}^3$ with total curvature at most $4\pi$ bounds a unique immersed minimal disk, which is moreover strictly stable.} This disk was later shown to be embedded by \cite{EkholmWhiteWienholtz2002}.

Here \emph{strict stability} means that the minimal disk $D$ is a strict local minimizer of the area functional among normal variations fixing $\partial D$. Equivalently, this amounts to the positivity of the first \emph{Dirichlet eigenvalue} of its \emph{Jacobi operator}, which measures the second variation of area with respect to normal variations and which we will introduce presently.

Barbosa–-do Carmo \cite{BarbosaDoCarmo1980} extended the stability part of Nitsche's theorem to space forms of constant sectional curvature, and uniqueness was later obtained in that setting by Li--Jost in \cite{Li-Jost}. Note that, without additional geometric conditions on the boundary curve, the problem of uniqueness is specific to the negatively curved setting. Indeed, in $\mathbb{S}^3$, for example, the equatorial sphere has vanishing total curvature, yet bounds a continuum of minimal disks.

Barbosa–do Carmo also indicated that their method should extend to general Riemannian ambient manifolds, but noted that the explicit curvature estimate it required was ``far from being completely settled'' outside the constant curvature case \cite[\S1.7]{BarbosaDoCarmo1980}. We establish this estimate assuming pinched negative curvature, with an explicit pinching threshold. Furthermore, under these hypotheses, we also obtain uniqueness within the class of embedded minimal disks.

\subsection{Main results} Before stating our results, we introduce some notation. Let $\overline g$ be a Riemannian metric on the closed unit ball $\closedthreeball$, let $\overline\nabla$ denote its \emph{Levi-Civita connection}, let $\overline{\sect}$ denote its
\emph{sectional curvature}, and let $\overlineRic$ denote its \emph{Ricci tensor}. We denote by $\|\overline\nabla\overlineRic\|$ the \emph{Hilbert--Schmidt norm} of $\overline{\nabla}\overlineRic$ (see \S\ref{ss.curvatures}). Finally, we define the \emph{total curvature} of a smooth closed curve $c$ in $\closedthreeball$ by
\begin{equation*}
K_c:=\int_c|\boldsymbol\kappa_c|\,\dd c\ ,
\end{equation*}
where $\boldsymbol\kappa_c=\overline\nabla_TT$ denotes its curvature vector, and $\dd c$ denotes its arc-length measure.

\begin{theoremABC}
\label{thm:MainResultA}
Let $$\alpha_\star:=\frac{4\sqrt{17}-8}{13}=0.653263269\ldots.$$
Suppose that $(\closedthreeball,\overline{g})$ has mean convex boundary and that, for some $\kappa>0$,
\begin{equation}
\label{eqn:CurvatureConditions}
-(1+\alpha_\star)\kappa^2\leq\overline{\sect}\leq-\kappa^2\qquad\text{and}\qquad\|\overline{\nabla}\overlineRic\| \leq \alpha_\star\kappa^3\ .
\end{equation}
If $c$ is a smooth, simple, closed curve in $\sphere^2$ such that
\begin{equation*}
K_c \leq 4\pi\ ,
\end{equation*}
then $c$ bounds a unique, embedded minimal disk $D$ in $(\closedthreeball,\overline{g})$. Furthermore, this disk is strictly stable.
\end{theoremABC}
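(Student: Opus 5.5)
The proof follows the Barbosa--do Carmo strategy. Stability comes from a bound on the area of the Gauss-type image of the minimal disk, and uniqueness from a degree/continuity argument.

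\textbf{Step 1: existence and embeddedness.} Since $\partial\closedthreeball$ is mean convex, the Douglas--Radó/Morrey solution of Plateau's problem applied to $c$ gives an area-minimizing disk $D_0$. By the Meeks--Yau embeddedness theorem for mean convex domains, $D_0$ is embedded.

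\textbf{Step 2: strict stability.} Let $D$ be any immersed minimal disk bounded by $c$. Its Jacobi operator is
\begin{equation*}
J=-\Delta-(|A|^2+\overlineRic(N,N)).
\end{equation*}
I would construct a positive function on $D$ and a conformal-type map into a model sphere, playing the role of the Gauss map in $\mathbb{R}^3$. The aim is a pointwise inequality
\begin{equation*}
|A|^2+\overlineRic(N,N)\leq \lambda\,(\text{Jacobian of this map}),
\end{equation*}
so that by the Barbosa--do Carmo isoperimetric argument strict stability follows once the image has area $<4\pi$.

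The map would come from the second fundamental form together with the curvature terms, for instance via the Hopf differential and $\log$ of a suitably weighted $|A|^2+c\kappa^2$. Gauss's equation gives $K_D=\overline{\sect}(TD)-|A|^2/2$. A Simons-type identity for $\Delta\log(|A|^2+\beta\kappa^2)$ then produces error terms. Terms in $\overline\nabla\overlineRic$ are controlled by the hypothesis $\|\overline\nabla\overlineRic\|\leq\alpha_\star\kappa^3$, and the curvature variation by the pinching $-(1+\alpha_\star)\kappa^2\leq\overline\sect\leq-\kappa^2$. Optimizing $\beta$ and closing a quadratic inequality should yield exactly the threshold $\alpha_\star=(4\sqrt{17}-8)/13$, which is the root of $13\alpha^2+16\alpha-16=0$. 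This computation is the main obstacle. I would first try to establish that $e^{-u}(|A|^2+\beta\kappa^2)$ is a subsolution, i.e.\ that the metric $(|A|^2+\beta\kappa^2)g_D$ has curvature at most $1$. This reduces the spectral comparison to a sphere of area bounded by total curvature.

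\textbf{Step 3: area bound via total curvature.} Using Gauss--Bonnet on $D$,
\begin{equation*}
\int_D(-K_D)+\int_c\kappa_g=2\pi,
\end{equation*}
together with $\kappa_g\geq-|\boldsymbol\kappa_c|$, the total curvature of the comparison metric is bounded by $2\pi+K_c-\int_D\overline\sect\ldots$. Negativity of $\overline\sect$ is used to absorb the interior terms, giving an area $\leq 4\pi$ for the comparison metric, with strict inequality unless the rigid case occurs, which negative curvature excludes. The first Dirichlet eigenvalue is then positive by the Barbosa--do Carmo/Bol--Fiala comparison.

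\textbf{Step 4: uniqueness.} Suppose $D_1\neq D_0$ were another embedded minimal disk with boundary $c$. Every minimal disk bounded by curves of total curvature $\leq4\pi$ is strictly stable, hence nondegenerate. I would deform $c$ through curves $c_t$ in $\sphere^2$ with $K_{c_t}\leq4\pi$ to a small round circle. For example, rescale toward a point using conformal/radial maps of $\sphere^2$ that do not increase total curvature in the limit, or interpolate in the model and use the smallness of curvature at small scales.

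Nondegeneracy plus compactness then gives that the space of embedded minimal disks spanning $c_t$ forms a finite covering of $[0,1]$. Compactness holds because the minimal disks have area bounds by monotonicity and curvature estimates from stability, and embeddedness is preserved by the maximum principle together with mean convexity. For a tiny circle near a boundary point, the metric is almost Euclidean, so the spanning disk is unique by Nitsche. Hence the covering has one sheet, and $D$ is unique.
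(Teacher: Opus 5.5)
\textbf{Both halves of your proposal have genuine gaps.}

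\textbf{Strict stability.} Your architecture is right. After scaling to $\kappa=1$ you want a conformal metric $h=\phi g$ on $D$ with curvature $\leq1$, area bounded via Gauss--Bonnet, and then the Barbosa--do Carmo comparison. But the computation you call ``the main obstacle'' is the proof, and two ideas are missing.

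First, take $\phi=1+\frac12\|A\|^2$. The formula $\sigma_h=\phi^{-1}(\sigma-\frac12\Delta\log\phi)$ contains the term $+|\dd\|A\|^2|^2/(8\phi^3)$. The ordinary Kato inequality only reduces the gradient terms to $\frac{\|\nabla A\|^2}{2\phi^3}\bigl(\frac12\|A\|^2-1\bigr)$, which is positive wherever $\|A\|^2>2$ and is not controlled. You need a refined Kato inequality (Lemma \ref{lem.refined_Kato}). By Codazzi, $\nabla A$ is symmetric up to the mixed Ricci covector $\overlineRic(\nu,\cdot)|_{TD}$. This gives $|\dd\|A\|^2|^2\leq2(1+r)\|A\|^2\|\nabla A\|^2+4(1+r^{-1})\|A\|^2\sum_i|\overlineRic_{\nu i}|^2$, and the choice $r=2/\|A\|^2$ cancels the gradient terms exactly.

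Second, the leftover mixed Ricci term must be bounded by the pinching. In dimension $3$ every bivector is decomposable, so the curvature operator has spectrum in $[-(1+\alpha),-1]$. This yields $\sum_i|\overlineRic_{\nu i}|^2\leq-(\bar\sigma+1)(\bar\sigma+1+\alpha)$ (Proposition \ref{prop_bound_ricci}).

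Only with both ingredients does one reach $\phi^2(\sigma_h-1)\leq F_\alpha(u,t)$ and the condition $16-16\alpha-13\alpha^2\geq0$. Asserting that optimizing $\beta$ ``should'' produce $\alpha_\star$ is reverse-engineering the answer.

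Your Step 3 would also break the comparison:
\begin{itemize}
\item Gauss--Bonnet for a disk reads $\int_DK_D+\int_ck_g=2\pi$. What you need is the upper bound $k_g\leq|\boldsymbol\kappa_c|$, which gives $\int_D(-K_D)\leq K_c-2\pi\leq2\pi$.
\item The weight must satisfy $\phi\leq-K_D=|\bar\sigma|+\frac12\|A\|^2$ pointwise; this is where $\bar\sigma\leq-1$ enters. It gives $\opArea_h(D)\leq2\pi$.
\item The model is then the hemisphere (curvature $1$, area $2\pi$, $\lambda_0=2$), which is exactly what $J=(2-\overlineRic_{\nu\nu})-\phi(\Delta^h+2)$ requires. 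Pairing curvature $\leq1$ with area up to $4\pi$, as you do, compares with essentially the whole sphere and gives an eigenvalue bound near $0$.
\item Strictness comes from the zeroth-order term $2-\overlineRic_{\nu\nu}\geq4$, not from excluding a rigid case.
\end{itemize}

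\textbf{Uniqueness.} Your covering argument over $[0,1]$ needs a path $c_t$ from $c$ to a tiny circle with $K_{c_t}\leq4\pi$ for \emph{every} $t$, so that all spanning disks stay nondegenerate. Nothing provides such a path. $K_{c_t}$ is measured in $\bar g$ and involves the normal curvature of $\sphere^2$ in $(\closedthreeball,\bar g)$. It is controlled neither by M\"obius or radial contractions of $\sphere^2$ nor by interpolation. Once the path leaves $\{K\leq4\pi\}$, disks can degenerate and appear or disappear in pairs, so an unsigned count is not conserved.

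The paper needs no such path. The boundary map $\partial:\mathcal{D}\to\mathcal{C}$, from embedded minimal disks to \emph{all} simple closed curves in $\sphere^2$, is proper. Upper area bounds come from the Gauss--Bonnet estimate (monotonicity only gives lower bounds), and compactness up to the boundary comes from White's compactness theorem. The map therefore has White's degree $\sum_{D\in\partial^{-1}(c)}(-1)^{\opMI(D)}$, which is independent of the regular value. By Theorem \ref{thm:Nitsche}, your $c$ is a regular value with all signs $+1$, so $\#\partial^{-1}(c)=\opDeg(\partial)=1$.

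Your base case also needs more than ``almost Euclidean, hence Nitsche'', because the near-Euclidean picture holds only in a small ball. You must first show that every minimal disk bounded by $c_{p,r}$ lies in $B_{2r}(p)$. The paper does this with varifold compactness, White's maximum principle at the mean convex boundary, and convexity of $d(p,\cdot)$. It then obtains local uniqueness from an implicit-function and foliation argument, not from Nitsche's theorem.
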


\begin{remark}
Under the mean convex boundary assumption, existence of a stable, embedded minimal disk is a consequence of Meeks-Yau theorem \cite{MeeksYau1982}. The novelty of Theorem \ref{thm:MainResultA} is thus uniqueness and \emph{strict} stability.
\end{remark}
\begin{remark}
Our hypotheses are not sharp. The quantity $\alpha_\star$ is merely the threshold at which our techniques break down. This is also reflected in the non-strict inequalities used. Indeed, \emph{strict} stability, and hence also uniqueness, are guaranteed since the operator $J$ has spectrum \emph{strictly} greater than that of $J'$ in the proof of Theorem \ref{thm:Nitsche} in Section \ref{ss:ProofOfNitsche}, below.
\end{remark}

Theorem \ref{thm:MainResultA} is a consequence of the following generalization of Nitsche's theorem.
\begin{theoremABC}
\label{thm:Nitsche}
Under the same hypotheses as in Theorem \ref{thm:MainResultA}, every immersed minimal disk $D$ in $(\closedthreeball,\overline{g})$ bounded by $c$ is strictly stable.
\end{theoremABC}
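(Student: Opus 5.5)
We follow the Barbosa--do Carmo strategy. For an immersed minimal disk $D$ with unit normal $N$ and second fundamental form $A$, the Jacobi operator is
\[
J=-\Delta_D-|A|^2-\overlineRic(N,N).
\]
Strict stability means that the first Dirichlet eigenvalue $\lambda_1(J)$ is positive.

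The comparison is with a model operator built from the Gauss map in constant curvature. In hyperbolic space, and more generally in space forms, one associates to $D$ a map into $\sphere^2$ whose pull-back of the round area form has density of the form $|A|^2/2+(\text{ambient correction})$. By Gauss--Bonnet and the boundary geometry, the total area of its image is controlled by the total curvature $K_c$ of the boundary curve. The hypothesis $K_c\leq4\pi$ then bounds this area by $4\pi$, so the image covers at most a hemisphere's worth of area in the sense of Barbosa--do Carmo. Their eigenvalue comparison (via symmetrization) then shows that $\lambda_1$ of $-\Delta_D-2\cdot(\text{density})$ is nonnegative.

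Concretely, we would construct an auxiliary operator $J'=-\Delta_D-V'$, where $V'\geq0$ is the density of the pull-back of the area form under a suitable generalized Gauss map $G:D\to\sphere^2$. For the ambient metric $\overline g$ we cannot use a genuine Gauss map. Instead we would use a ``hyperbolic'' Gauss map: either the conformal metric $\hat g=(|A|^2/2-\overlineRic(N,N)/2-\ldots)g_D$, or directly the metric $\frac12(|A|^2+2K_D-\ldots)$. We would show that this metric has Gaussian curvature at most $1$ wherever it is nondegenerate; this is the key curvature estimate flagged by Barbosa--do Carmo.

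Computing the curvature of such a conformal metric $e^{2u}g_D$ requires $\Delta_D\log|A|$. This follows from Simons' identity for minimal surfaces, whose error terms involve $\overline\nabla\overlineRic$ and the anisotropy of $\overline{\sect}$. The conditions
\[
-(1+\alpha_\star)\kappa^2\leq\overline{\sect}\leq-\kappa^2
\qquad\text{and}\qquad
\|\overline\nabla\overlineRic\|\leq\alpha_\star\kappa^3
\]
are exactly what is needed to absorb these errors. One uses Cauchy--Schwarz with weights optimized in a quadratic inequality, whose discriminant condition gives the constant $\alpha_\star$ (the root of $13\alpha^2+16\alpha-16=0$).

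Having curvature at most $1$ and total area at most $4\pi$ (from Gauss--Bonnet together with the Gauss equation $K_D=\overline{\sect}(TD)-|A|^2/2$ and $K_c\leq4\pi$), we apply the Barbosa--do Carmo / Bol--Fiala isoperimetric comparison. This gives $\lambda_1(-\Delta-2K_{\hat g}\text{-type potential})\geq\lambda_1$ of a spherical cap of area at most $2\pi$, which is $\geq0$. Thus $\lambda_1(J')\geq0$.

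Finally, we show pointwise $V'<|A|^2+\overlineRic(N,N)$, i.e. $J>J'$ as operators. Since $\overlineRic(N,N)\leq-2\kappa^2<0$, some negative curvature must be spent. The point is that $J$ absorbs $|A|^2$ while $J'$ uses a density involving only $|A|^2$ minus a strictly positive ambient term of size $\kappa^2$, and the margin is strict. This gives $\lambda_1(J)>\lambda_1(J')\geq0$, proving strict stability.

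The main obstacle is the curvature estimate: proving that the conformal Gauss-type metric has curvature at most $1$ in variable curvature. This requires controlling the Codazzi/Simons error terms $\overline\nabla\overline R$ and the off-diagonal curvature terms, and handling the zeros of $A$ (umbilic points, which are isolated and handled by a log-singularity argument). We would first try the metric $\hat g=(|A|^2/2+\kappa^2)\,g_D$ and tune the constant to match the pinching.
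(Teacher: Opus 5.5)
Your architecture matches the paper's. After normalizing $\kappa=1$, your trial metric $\hat g=(\tfrac12\|A\|^2+1)g$ is exactly $h=\phi g$. One then writes $Ju=(2-\overlineRic_{\nu\nu})u-\phi(\Delta^h+2)u$, proves $\sigma_h\le1$ and $\opArea_h(D)\le2\pi$, and compares with the hemisphere, where $\lambda_0(-\Delta)=2$.

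\textbf{Main gap: the curvature estimate.} The step that carries all the content in variable curvature, $\sigma_h\le1$, is only asserted. The tools you name (Simons' identity plus Cauchy--Schwarz) do not suffice. Simons' identity controls $\Delta\|A\|^2$, but $\sigma_h=\phi^{-1}(\sigma-\frac12\Delta\log\phi)$ also contains the gradient terms $-\frac{\|\nabla A\|^2}{2\phi^2}+\frac{|\dd\|A\|^2|^2}{8\phi^3}$. The ordinary Kato inequality $|\dd\|A\|^2|^2\le4\|A\|^2\|\nabla A\|^2$ only bounds these by $\frac{\|\nabla A\|^2}{2\phi^3}(\frac12\|A\|^2-1)$. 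This is positive and unbounded wherever $\|A\|^2>2$, so no zeroth-order pinching can absorb it.

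The missing idea is a refined Kato inequality that exploits Codazzi. Set $x_i=\nabla_iA_{11}$ and $y_i=\nabla_iA_{12}$ in an adapted frame. Codazzi gives $x=Ry+z$, with $R$ the rotation by $\pi/2$ and $z=(\overlineRic_{\nu1},-\overlineRic_{\nu2})$. Moreover $\|\nabla A\|^2=2(|x|^2+|y|^2)$ and $|\dd\|A\|^2|^2=8\|A\|^2|x|^2$. Young's inequality with weight $r=2/\|A\|^2$ then makes the $\|\nabla A\|^2$ terms cancel exactly, leaving only the zeroth-order error $\frac{\|A\|^2}{2\phi^2}\sum_i|\overlineRic_{\nu i}|^2$.

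You still need a sharp bound on these mixed Ricci terms. In dimension $3$ every bivector is decomposable, so the curvature operator satisfies $\|\mathcal{R}+(1+\frac\alpha2)\Id\|\le\frac\alpha2$. Evaluating on $e_1\wedge e_2$ gives $\sum_i|\overlineRic_{\nu i}|^2\le-(\bar\sigma+1)(\bar\sigma+1+\alpha)$. Only with these two ingredients does one reach $\phi^2(\sigma_h-1)\le F_\alpha(u,t)$. This is a concave quadratic in $t=\sqrt{|\det A|}$, whose discriminant condition at the worst case $u=-\bar\sigma-1=\alpha$ is $16-16\alpha-13\alpha^2\ge0$. Reading off the root of $13\alpha^2+16\alpha-16$ from the statement does not replace this derivation.

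\textbf{The area bound.} It must be $\opArea_h(D)\le2\pi$, not $4\pi$. A spherical cap of area close to $4\pi$ has first Dirichlet eigenvalue close to $0<2$, so the comparison would not give $-\Delta^h-2\ge0$. The correct bound combines the Gauss equation, $-\sigma=|\bar\sigma|+\frac12\|A\|^2\ge\phi$, with Gauss--Bonnet on the disk: $\opArea_h(D)\le-\int_D\sigma\,\dd\opArea_g=\int_ck_D-2\pi\le K_c-2\pi\le2\pi$. The Euler-characteristic term $-2\pi$ is exactly why $4\pi$ is the threshold.

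\textbf{The comparison $J>J'$.} With $J'=-\Delta^g-V'$, this means $V'>\|A\|^2+\overlineRic(\nu,\nu)$, not $<$. With the potential $V'=2\phi=\|A\|^2+2$ actually produced by $h$, one has $J-J'=2-\overlineRic_{\nu\nu}\ge4$. So this step has ample room, and the hypothesis $\overline{\sect}\le-1$ is really used in the area bound and in $\sigma_h\le1$.

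Finally, since $\phi\ge1$, the metric $h$ never degenerates, so no log-singularity argument at umbilic points is needed.
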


\begin{remark}
The bound $4\pi$ in Nitsche's theorem is believed to be \emph{sharp}. Indeed, in \cite{Bohme82}, B\"ohme announced that, for all $\epsilon>0$ and $n>1$, there exists a simple closed curve of total curvature at most $4\pi+\epsilon$ bounding at least $n$ minimal disks. However, the proof was never published, and it is possible that the disks in question have branch points. In \cite{Nitsche68}, Nitsche produced \emph{explicit} counterexamples to uniqueness . Indeed, recall that the \emph{Enneper surface} is the image of the map $z\mapsto(\text{Re}(z-\frac{z^3}{3}),-\text{Im}(z+\frac{z^3}{3}),\text{Re}(z^2))$. For $r\in(1,\sqrt{3})$, the image under this map of $\{|z|\leq r\}$ is an \emph{embedded} minimal disk, and since its Jacobi operator is non-degenerate of Morse index $1$, it is unstable. Its boundary, called the \emph{Enneper wire} is a Jordan curve $c_r$ which lies on an explicit convex ellipsoid $E_r$ (see \cite[\S 2.8]{Nitsche68}). In particular, since it lies on the boundary of a convex body, by the result \cite{MeeksYau1982} of Meeks--Yau, $c_r$ also bounds an embedded, area-minimizing, and thus \emph{stable}, disk. In this manner, Nitsche showed that each Enneper wire spans at least two distinct \emph{embedded minimal disks}. Nitsche also computed the total curvatures of Enneper wires explicitly in terms of elliptic integrals \cite[p.~437]{Nitsche_book}. For $r$ close to $1$, this total curvature is roughly $4.24\pi$.

The property of bounding a non-degenerate embedded minimal disk of Morse index $1$ is stable under perturbations of the ambient metric. This can be proved with an argument similar to the proof of Lemma \ref{lemma:ImplicitFunctionTheorem} in Section 5 below. Likewise, the convexity of the ellipsoid containing the Enneper wire is also stable. By perturbing the Euclidean metric to a metric of small constant negative curvature, and applying Meeks--Yau once again, we therefore obtain a simple closed curve, on the boundary of a convex domain in a negatively curved manifold, of total curvature at most $4.25\pi$, which bounds two distinct embedded minimal disks, one of Morse index $1$ and one area-minimizing.
\end{remark}

\subsection{Foliated Plateau problems and boundary area spectrum} 
Theorem~\ref{thm:MainResultA} is a key ingredient in our companion paper \cite{AlvarezLefeuvreLoweSmith}, where we introduce \emph{area simple metrics}, define their boundary area spectrum, and study its local and global rigidity.

In that reference, we consider minimal discs bounded by \emph{round circles}, where a round circle is the intersection of $\mathbb{S}^2$ with an affine plane in $\mathbb{R}^3$. We denote by $\mathcal{C}$ the set of oriented round circles, and we say that a Riemannian metric $\bar g$ on $\closedthreeball$ is \emph{area simple} if
\begin{enumerate}
  \item $\mathbb{S}^2$ is mean convex with respect to $\bar g$;
  \item every $c\in\mathcal{C}$ bounds a unique $\bar g$-minimal disc $\opD_{\bar g}(c)$, and this disc is embedded and strictly stable.
\end{enumerate}
The model example is a geodesic ball in hyperbolic space, identified with $\closedthreeball$ via the Klein model (suitably rescaled): $\opD_{\bar g}(c)$ is then simply the flat disc bounded by $c$, which is totally geodesic. Area simple metrics form an open subset of the space of Riemannian metrics on $\closedthreeball$ \cite{AlvarezLefeuvreLoweSmith}. The \emph{boundary area spectrum} of an area simple metric $\bar g$ is the map
\[
  \mathcal{A}_{\bar g}\colon\mathcal{C}\to(0,\infty),\qquad c\mapsto\opArea_{\bar g}\bigl(\opD_{\bar g}(c)\bigr).
\]
The \emph{boundary area rigidity problem} asks whether $\mathcal{A}_{\bar g}$ determines $\bar g$, up to isometry. In \cite{AlvarezLefeuvreLoweSmith}, we prove several rigidity results, both local and global, for pairs of conformal metrics $\bar g$ and $e^{2f}\bar g$. In particular, if both are area simple and $\mathcal{A}_{\bar g}=\mathcal{A}_{e^{2f}\bar g}$, then $f=0$ in each of the following two cases:
\begin{enumerate}
  \item \textbf{Local rigidity:} \emph{$\bar g$ belongs to a certain open and dense subset of the space of area simple metrics, endowed with the $C^\infty$ topology, and $f\in C^{14}(\closedthreeball)$ satisfies $\|f\|_{C^{14}}\leq\epsilon$, where $\epsilon>0$ depends only on $\bar g$};
  \item \textbf{Global rigidity:} \emph{$\bar g$ and $e^{2f}\bar g$ are real analytic.}
\end{enumerate}

Theorem~\ref{thm:MainResultA} provides an explicit open set of area simple metrics. More precisely, we prove that every Riemannian metric satisfying the following conditions for some $\kappa>0$ is area simple
\begin{enumerate}
  \item[(A1)] \emph{$\mathbb{S}^2$ is mean convex with respect to $\bar g$};
  \item[(A2)] \emph{$\text{K}(\bar g)<4\pi$, where $\text{K}(\bar g)$ denotes the supremum of the total curvatures of the round circles $c\in\mathcal{C}$ with respect to $\bar g$};
  \item[(A3)] $-(1+\alpha_\star)\kappa^2\leq\overline{\sect}\leq-\kappa^2$;
  \item[(A4)] $\|\overline{\nabla}\overlineRic\| \leq \alpha_\star\kappa^3$.
\end{enumerate}

\subsection{Asymptotic Plateau problems} It is natural to ask whether Theorem \ref{thm:MainResultA} admits an analogue for the asymptotic Plateau problem. Given a Jordan curve $\Lambda$ in the boundary at infinity of a simply connected negatively curved $3$-manifold, one asks whether it bounds a unique properly embedded minimal disk.

Existence was established by Anderson \cite{Anderson83} (see also \cite{Som04,soma05}), who also showed the failure of uniqueness in general by constructing Jordan curves bounding infinitely many complete embedded minimal surfaces. However, uniqueness is known under additional geometric assumptions. In \cite{Uhlenbeck83}, Uhlenbeck proved uniqueness in the almost-Fuchsian case, whilst in \cite{Seppi16}, Seppi established explicit conditions on the quasisymmetry constant of the boundary curve for Uhlenbeck's hypotheses to hold. More recently, Huang--Lowe--Seppi \cite{HuangLoweSeppi26} established uniqueness under quantitative assumptions on the geometry at infinity, such as finite width or small quasiconformal distortion, together with suitable control of the asymptotic geometry of the minimal disk. On the other hand, they also constructed Jordan curves bounding uncountably many distinct properly embedded stable minimal disks.

Unlike the classical Plateau problem, however, the boundary at infinity carries only a conformal structure, so that total curvature is no longer defined. The known uniqueness criteria are therefore formulated in terms of different geometric quantities, such as the width of the curve or its quasisymmetry constant. It would be interesting to identify a geometric condition at infinity playing the same role as the total curvature bound in Nitsche's theorem and in Theorem \ref{thm:MainResultA}.

\begin{question}
Does there exist a natural geometric condition on Jordan curves at infinity yielding uniqueness for the asymptotic Plateau problem in $\mathbb H^3$, in asymptotically hyperbolic $3$-manifolds \cite{alexakis2010renormalized,marx2024inverse}, or in universal covers of closed negatively curved $3$-manifolds \cite{Gromov-91-1,lowe2021deformations}?
\end{question}

\noindent \textbf{GenAI disclosure:} The main objective of Theorem \ref{thm:MainResultA}, namely to obtain stability over some explicit neighbourhood of the hyperbolic metric, was established without the assistance of GenAI. Our initial approach yielded a weaker pinching constant $\alpha_\star$. With the help of GenAI, we obtained Proposition \ref{prop_bound_ricci}, which yields an improved value of this constant. Theorem \ref{thm:Nitsche} was derived from Theorem \ref{thm:MainResultA} without the assistance of GenAI. GenAI was used to assist in language and presentation.\\
\\
\noindent \textbf{Acknowledgement:} This project has received funding from the European Research Council (ERC) under the European Union’s Horizon research and innovation programme (grant agreement No. 101162990) and from IRL-2030 IFUMI, Laboratorio del Plata. S.A. acknowledges financial support from CSIC through the
Grupo I+D 149-348 ``Geometría y Acciones de Grupos''. During the preparation of this
paper, S.A. benefited from a \emph{poste rouge} from the CNRS.

\section{Geometric preliminaries}

\subsection{Curvature}\label{ss.curvatures} Let $(\closedthreeball,\overline{g})$ be a closed Riemannian $3$-ball. Let $\overline\nabla$ denote the Levi-Civita connection. We use the following convention for the Riemann curvature tensor
\begin{equation}\label{eq:curv-convention}
 \overlineR(X,Y)Z
 :=
 \overline\nabla_X\overline\nabla_YZ
 -\overline\nabla_Y\overline\nabla_XZ
 -\overline\nabla_{[X,Y]}Z\ ,
\end{equation}
and we denote
\begin{equation*}
\overlineR(X,Y,Z,W):=\overline g(\overlineR(X,Y)Z,W)\ .
\end{equation*}
We recall that this $(0,4)$-tensor satisfies the following symmetries
\begin{equation*}
\overlineR(X,Y,Z,W)=-\overlineR(Y,X,Z,W)=-\overlineR(X,Y,W,Z)=\overlineR(Z,W,X,Y)\ .
\end{equation*}

Given an orthonormal pair of vectors $(X,Y)$, we define the sectional curvature of the plane that they generate by
\begin{equation}\label{eq:sect-def}
\overline{\text{sect}}(X,Y):=\overlineR(X,Y,Y,X)\ .
\end{equation}

We define the \emph{Ricci tensor} to be the trace
\begin{equation}\label{eq:Ric-def}
\overlineRic(Y,Z):=\tr_{\overline g}\bigl(X\longmapsto\overlineR(X,Y)Z\bigr)=\sum_k\overlineR(f_k,Y,Z,f_k)\ ,
\end{equation}
where $(f_k)$ is any orthonormal frame of $T\closedthreeball$. Its covariant derivative is the $(0,3)$-tensor
\begin{equation}\label{eq:nablaRic}
(\overline\nabla\overlineRic)(X;Y,Z):=(\overline\nabla_X\overlineRic)(Y,Z)\ .
\end{equation}
In what follows, we use the \emph{Hilbert-Schmidt norm} of $\overline{\nabla}\overlineRic$, given by
\begin{equation}\label{eq:nablaRic-norm}
|\overline\nabla\overlineRic|_p^2
:=\sum_{a,b,c}\bigl((\overline\nabla_{f_a}\overlineRic)(f_b,f_c)\bigr)^2\ ,
\end{equation}
which is independent of the orthonormal frame chosen. The norm over $\overline{\nabla}\overlineRic$ over $\closedthreeball$ is then given as
\begin{equation*}
\|\overline\nabla\overlineRic\|:=\sup_{p\in\closedthreeball}|\overline\nabla\overlineRic|_p\ .
\end{equation*}

\subsection{Geometry of immersed disks} Let $D\subset\closedthreeball$ be an oriented immersed disk. Let $\nu$ denote the unit normal vector field compatible with the chosen orientation. Let $g$ denote the restriction of $\overline g$ to $TD$, let $\nabla$ denote its \emph{Levi--Civita connection}, and let $\opR$ denote its \emph{Riemann curvature tensor}.

\subsubsection{Induced curvatures}
Let $\overline \sigma$ denote the restriction to $TD$ of the \emph{ambient sectional curvature}, that is
\begin{equation}\label{eq.sigmabar}
\overline{\sigma}:=\overline{\text{sect}}(f_1,f_2)\ ,
\end{equation}
where $(f_k)$ is any orthonormal frame of $TD$. Let $\sigma$ denote the \emph{Gaussian curvature} of the \emph{induced metric}, that is
\begin{equation*}
\sigma=\opR(f_1,f_2,f_2,f_1)\ .
\end{equation*}
Recall that, in $2$ dimensions, the Riemann curvature tensor is given by
\begin{equation}\label{eq.Riemann_tensor_dim2}
\opR(X,Y)Z=\sigma(g(Y,Z)X-g(X,Z)Y)\ .
\end{equation}

\subsubsection{Shape operator} The \emph{shape operator} of the disk $D$ is given by
\begin{equation*}
A X:=\overline{\nabla}_X\nu\ .
\end{equation*}
This is a bundle end\m{o}morphism of $TD$ which is self-adjoint with respect to the induced metric $g$. By abuse of notation we will also denote by $A$ the associated \emph{second fundamental form},
\begin{equation*}
A(X,Y)=\bar g(AX,Y)\ .
\end{equation*}

We denote the eigenvalues of $A$, also known as the \emph{principal curvatures} of $D$, by $\lambda_1,\lambda_2$. We denote their corresponding unit eigenvectors, also known as \emph{principal directions}, by $e_1,e_2$, so that, for each $i$,
\begin{equation}\label{eq_pcpl_dir}
A e_i:=\overline{\nabla}_{e_i}\nu=\lambda_ie_i\ .
\end{equation}

We define the \emph{mean curvature} of the disk $D$ by
\begin{equation}\label{eq_mean_curv}
H:=\frac{\tr_g(A)}{2}=\frac{\lambda_1+\lambda_2}{2}\ ,
\end{equation}
and we define its \emph{extrinsic curvature} by
\begin{equation}\label{eq_ext_curv}\kappa_{ext}:=\det(A)=\lambda_1\lambda_2\ .
\end{equation}
Recall that the connections $\overline{\nabla}$ and $\nabla$ are related through the \emph{Gauss' formula} (see \cite{spivak1979comprehensive3}). With our sign conventions, this is
\begin{equation}\label{eq.GW_formula}
\overline{\nabla}_XY=\nabla_XY-A(X,Y)\nu\quad\text{for every }X,Y\in TD\ .
\end{equation}
Likewise, \emph{Codazzi--Mainardi equation} is
\begin{equation}\label{Codazzi_Mainardi1}(\nabla_X A)Y-(\nabla_Y A)X=\overlineR(X,Y)\nu\ .
\end{equation}
Note that the right-hand is tangent to $D$, since its normal component is equal to $\overlineR(X,Y,\nu,\nu)=0$. With our conventions, \emph{Gauss equation} is
\begin{equation}\label{GaussEquation}
\overlineR(X,Y,Z,W)=\opR(X,Y,Z,W)+A(X,Z)A(Y,W)-A(Y,Z)A(X,W)\ .
\end{equation}
In $2$-dimensions, taking $X=W=e_1$ and $Y=Z=e_2$, where $e_1,e_2$ denote the principal principal directions, this reduces to
\begin{equation}\label{GaussEqn2}
\sigma=\kappa_{ext}+\overline{\sigma}\ .
\end{equation}

We will also make use of the following \emph{Ricci identity} for $(0,2)$-tensors
\begin{equation}\label{eq.ricci_identity}
(\nabla_X\nabla_Y-\nabla_Y\nabla_X-\nabla_{[X,Y]})A(Z,W)=
-A(\opR(X,Y)Z,W)-A(Z,\opR(X,Y)W)\ .
\end{equation}

\subsubsection{Adapted frames and pointwise identities} All pointwise estimates below will be given with respect to an \emph{adapted frame} of $D$, which is defined as follows. Given $p\in D$, let $(e_1,e_2)$ be an orthonormal basis of principal directions of $D$ at $p$, that is, for each $i$,
\begin{equation*}
Ae_i = \lambda_i e_i\ .
\end{equation*}
We extend this to a frame over a neighbourhood of $p$ by parallel transport along geodesics leaving $p$, and we call the triplet $(e_1,e_2,\nu)$ an \emph{adapted frame} of $\opT\closedthreeball$ along $D$.

By construction, for all $i$, $j$,
\begin{equation*}
(\nabla_{e_i} e_j)(p) = 0\ ,
\end{equation*}
Thus, by Gauss' formula \eqref{eq.GW_formula}, for all $i$, $j$,
\begin{equation*}
(\overline\nabla_{e_i} e_j)(p) = -\lambda_i\delta_{ij}\nu(p)\ .
\end{equation*}
Furthermore, since $\nabla$ is torsion free,
\begin{equation*}
[e_1,e_2](p) = (\nabla_{e_i}e_j)(p) - (\nabla_{e_2}e_1)(p) = 0\ .
\end{equation*}

Given any $(0,m)$-tensor $S$, we denote
\begin{equation*}
S_{i_1\ldots i_m}:=S(e_{i_1},\cdots,e_{i_m})\ ,
\end{equation*}
and we denote
\begin{equation}
\label{eqn:CovariantDerivativeTerminology}
\nabla_{k_1}\cdots\nabla_{k_n} S_{i_1\cdots i_m} := (\nabla^m S)(e_{i_1},\cdots,e_{i_m};e_{k_n},\cdots,e_{k_1})\ .
\end{equation}
We will require the following pointwise identities concerning the ambient Ricci and Riemann curvature tensors.
\begin{lemma}\label{lem:star}
In an adapted frame, for $\{i,k\}=\{1,2\}$,
\begin{enumerate}
\item $\overlineRic_{ii}=\overline{\sigma}+\overline{\sect}_{i\nu}$;
\item $\overlineRic_{\nu \nu}=\overline{\sect}_{1\nu}+\overline{\sect}_{2\nu}$; and
\item $\overlineR_{ki\nu i}=-\overlineRic_{\nu k}\cdot(1-\delta_{ik})$
\end{enumerate}
\end{lemma}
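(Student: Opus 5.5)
The plan is to expand each quantity directly from the definition \eqref{eq:Ric-def} of the Ricci tensor. I will take the orthonormal frame $(f_k)$ there to be the adapted frame $(e_1,e_2,\nu)$ at the point $p$. After that, only the algebraic symmetries of $\overlineR$ recalled in \S\ref{ss.curvatures} and the convention \eqref{eq:sect-def} $\overline{\sect}(X,Y)=\overlineR(X,Y,Y,X)$ are needed. The lemma is pointwise and purely algebraic, so the derivative properties of the adapted frame play no role; the only input is that $(e_1,e_2,\nu)$ is orthonormal at $p$.

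For (1), fix $i$ and let $j$ be the other index in $\{1,2\}$. Expanding gives
\begin{equation*}
\overlineRic_{ii}=\overlineR(e_i,e_i,e_i,e_i)+\overlineR(e_j,e_i,e_i,e_j)+\overlineR(\nu,e_i,e_i,\nu)\ .
\end{equation*}
The first term vanishes by antisymmetry in the first two slots. The second term is $\overline{\sect}(e_j,e_i)=\overline{\sect}(e_1,e_2)=\overline\sigma$, by the symmetry of sectional curvature and \eqref{eq.sigmabar}. The third term is $\overline{\sect}_{i\nu}$, using the same symmetry. Item (2) follows in exactly the same way: in $\overlineRic_{\nu\nu}=\sum_k\overlineR(f_k,\nu,\nu,f_k)$ the $\nu$-term vanishes, and the two remaining terms are $\overline{\sect}_{1\nu}$ and $\overline{\sect}_{2\nu}$.

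For (3), there are two cases.

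\emph{Case $i=k$.} Here $\overlineR_{ii\nu i}=0$ by antisymmetry in the first two slots, which matches the factor $1-\delta_{ik}=0$.

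\emph{Case $i\neq k$.} I expand
\begin{equation*}
\overlineRic_{\nu k}=\overlineRic(\nu,e_k)=\overlineR(e_i,\nu,e_k,e_i)+\overlineR(e_k,\nu,e_k,e_k)+\overlineR(\nu,\nu,e_k,\nu)\ .
\end{equation*}
The last two terms vanish by antisymmetry in the last two slots and in the first two slots, respectively. This leaves $\overlineRic_{\nu k}=\overlineR(e_i,\nu,e_k,e_i)$. By pair symmetry this equals $\overlineR(e_k,e_i,e_i,\nu)$. Antisymmetry in the last two slots then turns it into $-\overlineR(e_k,e_i,\nu,e_i)=-\overlineR_{ki\nu i}$, as claimed.

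The only step needing care is tracking signs and slot orderings in (3), where pair symmetry and antisymmetry must be applied in the correct order under the paper's conventions. If in doubt, I would sanity-check this against the constant curvature case, where $\overlineR(X,Y,Z,W)=c(\langle Y,Z\rangle\langle X,W\rangle-\langle X,Z\rangle\langle Y,W\rangle)$. There both sides of (3) vanish and $\overline{\sect}=c$, which is consistent with (1) and (2).
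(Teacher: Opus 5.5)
Your proof is correct: expanding $\overlineRic$ in the orthonormal frame $(e_1,e_2,\nu)$ and applying the curvature symmetries gives all three items. The sign chain in (3), $\overlineRic_{\nu k}=\overlineR(e_i,\nu,e_k,e_i)=\overlineR(e_k,e_i,e_i,\nu)=-\overlineR_{ki\nu i}$, is right under the paper's conventions.

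The paper states this lemma without proof, so your frame computation is the evident intended verification. One small addition: the argument uses only orthonormality, so it holds at every point where the adapted frame is defined, not just at $p$. This matters because item (3) is invoked ``near $p$'' in the proof of Lemma~\ref{lem:Simons}.
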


\subsection{Minimal disks, Jacobi operators and stability}

\subsubsection{Jacobi operator of minimal disks} We say that an immersion $e:D\to\closedthreeball$ is \emph{minimal} whenever its mean curvature vanishes, that is
\begin{equation*}
H=\frac{\lambda_1+\lambda_2}{2}=0\ .
\end{equation*}

The \emph{Jacobi operator} of $e$ is defined as follows. We first extend $(\closedthreeball,\overline g)$ to a slightly larger open manifold $(M,\overline g)$. For sufficiently small $\varepsilon>0$, consider the map
\begin{equation*}
E:D\times(-\varepsilon,\varepsilon)\to M\ ,\qquad (p,t)\mapsto \exp_{e(p)}(t\,\nu(p))\ .
\end{equation*}
For sufficiently small $\varepsilon$, this map is an immersion. More precisely, it is the \emph{Fermi paramet\-rization} of a neighbourhood of $e(D)$ in $M$. Given $u\in C^2(D)$ with $\|u\|_{L^\infty}<\varepsilon$, we define the immersion $e_u:D\to M$ by
\begin{equation*}
e_u(p):=E(p,u(p))\ .
\end{equation*}
In this manner, immersions sufficiently close to $e$ are represented uniquely by normal graphs of functions over $D$. For each $u$, we denote by $A_{u}$ the shape operator of the immersion $e_u$, and we denote by $H_u$ its mean curvature, that is
\begin{equation*}
H_u := \frac{1}{2}\opTr(A_u)\ .
\end{equation*}
The map $(u\mapsto H_u)$ is a second-order, quasi-linear partial differential operator defined over a neighbourhood of $0$ in $C^2(D)$.

\begin{definition}[Jacobi operator]
The \emph{Jacobi operator} $J$ of $D$ is defined to be linearization at $e$ of the mean curvature operator, that is, for all $u\in C^2(D)$,
\begin{equation*}
Ju := \partial_t (2H_{tu})|_{t=0}\ .
\end{equation*}
\end{definition}

The Jacobi operator is a second-order, linear, elliptic partial differential operator over $D$. With our sign conventions, it is given by (see, for example \cite{ColdingMinicozzi2011,Rosenberg1993}, and also \cite[Section 3.3.2]{AlvarezLefeuvreLoweSmith} for more general elliptic curvature functions)
\begin{equation}
\label{eqn:EqnOfJacobiOperator}
Ju = -(\overlineRic_{\nu\nu}+\|A\|^2)u - \Delta^gu\ ,
\end{equation}
where here $\|A\|^2=\tr(A^2)$ and $\Delta^gu=\tr_g(\nabla^2u)$ denotes the standard Laplace--Beltrami operator of $g$.

The Jacobi operator is understood geometrically as the second variation of area. Indeed, denoting by $\mathcal{A}_u$ the area of $e_u$ for all $u$
\begin{equation*}
\partial_t^2 \mathcal{A}_{tu}|_{t=0}=\int_D u\,Ju\,~\dd\opArea_g + \int_{\partial D}u\partial_n u~ \dd \ell\ ,
\end{equation*}
where $\dd\ell$ is the arc-length measure, $\partial_n u$ here denotes the derivative of $u$ in the outward-pointing unit conormal direction over $\partial D$ (see, for example, \cite{ColdingMinicozzi2011,Rosenberg1993}). In particular, when $u$ vanishes over $\partial D$, this reduces to
\begin{equation*}
\partial_t^2 \mathcal{A}_{tu}|_{t=0}=\int_D u\,Ju\,\dd\opArea_g\ .
\end{equation*}

\subsubsection{Dirichlet spectrum and strict stability} Let $L$ be a second-order, self-adjoint, linear, elliptic partial differential operator over $D$. We define a \emph{Dirichlet eigenfunction} of $L$ with \emph{eigenvalue} $\lambda\in\C$ to be a solution $u$ of the boundary value problem
\begin{equation*}
Lu=\lambda u\qquad\text{and}\qquad u|_{\partial D} = 0\ .
\end{equation*}
The \emph{Dirichlet spectrum} of $L$ is then defined to be the set of all Dirichlet eigenvalues. Since $L$ is self-adjoint, this spectrum is a discrete subset of $\mathbb{R}$ which only accumulates at $\infty$ (see \cite[Section 8]{GilbargTrudinger2001}).

We denote the \emph{least Dirichlet eigenvalue of} $L$ by $\lambda_0(L)$, and we say that $L$ is \emph{strictly stable} whenever
\begin{equation*}
\lambda_0(L)>0\ .
\end{equation*}
We say that an immersed minimal disk is \emph{strictly stable} whenever its Jacobi operator has this property, that is, whenever $\lambda_0(J)>0$. Geometrically, this condition implies that every normal perturbation of $D$ which preserves the boundary increases the area of $D$ non-trivially at second order.

\subsection{Geometric measure theory and stationary varifolds}\label{ss.gmt_varifouille}

Our proof of uniqueness uses the theory of stationary varifolds, which generalize smooth minimal surfaces. We refer the reader to \cite[Chapter 8]{simon1983lectures} for the general theory, and to \cite{white2010maximum} for the case of manifolds with boundary.

In the present context, a \emph{varifold} over $\closedthreeball$ is a Radon measure on the unit tangent bundle $\opU\closedthreeball$, where a unit
vector $N\in\opU_x\closedthreeball$ is understood to represent the oriented $2$-plane $N^\perp\subseteq T_x\closedthreeball$. The \emph{mass} of a varifold $V$ is defined by
\begin{equation*}
M(V):=V(\opU\closedthreeball)\ .
\end{equation*}

Given an oriented, immersed surface $S\subseteq\closedthreeball$, with unit normal $N:S\rightarrow\opU\closedthreeball$, its \emph{associated varifold} $V:=V(S)$ is defined to be the push-forward through $N$ of the area measure of this surface, that is, for every open subset
$\Omega$ of $\opU\closedthreeball$,
\begin{equation*}
V(\Omega) := \opArea(N^{-1}(\Omega))\ .
\end{equation*}
In this case, the mass of $V$ coincides with the area of $S$.

In order to study variations of varifolds, it is convenient to view $\closedthreeball$ as a subset of a larger, open manifold $\Omega$, say. The following definitions are trivially independent of the extension chosen. Given a compactly supported vector field $X$ over $\Omega$, we denote its flow by $(\phi_t)_{t\in\mathbb{R}}$, and we define the \emph{first variation} of a varifold
$V$ in the direction of $X$ by 
\begin{equation*}
\delta V(X) := \partial_t M(\phi_{t*}V)|_{t=0}\ ,
\end{equation*}
where $\phi_{t*}V$ here denotes the push-forward of $V$ through $\phi_t$. We say that a varifold $V$ over $\closedthreeball$ is \emph{stationary} whenever $\delta V(X)$ vanishes for every vector field $X$.

By the first variation formula (see Section $39$ of \cite{simon1983lectures}),
\begin{equation*}
\delta V(X) = \int_{\opU\closedthreeball}\nabla^{P}\!\cdot X~\dd V\ ,
\end{equation*}
where $P:=N^\perp$ and $\nabla^P\!\cdot X$ denotes the divergence of $X$ along the plane $P$. When $V$ is the varifold associated to an immersed disk $D$, say, Stokes theorem yields, for every vector field $X$,
\begin{equation}
\label{eqn:FirstVariationOfDisk}
\delta V(X)= 2\int_D H\langle X,\nu\rangle~\dd\opArea
+ \int_{\partial D}\langle X,n\rangle~\dd\ell\ ,
\end{equation}
where here $\nabla^D\!\cdot X$ denotes the divergence of $X$ along $TD$, $\nu$ denotes the unit normal vector field over $D$ with respect to which its shape operator is defined, and $n$ denotes the outward-pointing unit conormal vector field over $\partial D$.

Finally, we will use the following consequence of White's maximum principle for stationary varifolds (see \cite[Theorem 1]{white2010maximum}).
\begin{theorem}[White's geometric maximum principle]
\label{thm:White}
If the mean curvature of $\mathbb{S}^2$ is everywhere positive, then $\mathbb{S}^2$ is disjoint from the support of every
stationary varifold over $(\closedthreeball,\overline g)$.
\end{theorem}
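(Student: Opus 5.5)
The plan is to derive the statement directly from \cite[Theorem 1]{white2010maximum}, which treats stationary varifolds in a Riemannian manifold with boundary. That theorem says the following. Suppose $N$ is a Riemannian manifold with smooth boundary whose mean curvature vector points into $N$ (strict mean convexity). Then no stationary varifold in $N$, stationary with respect to all ambient vector fields tangent at the boundary or compactly supported, has support touching $\partial N$. The proof therefore consists of checking that our setting fits White's hypotheses, and then translating his conclusion into the present language.

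\textbf{Step 1: match the variational class.} Extend $(\closedthreeball,\overline g)$ to an open manifold $\Omega$ as in \S\ref{ss.gmt_varifouille}. Our notion of stationarity requires $\delta V(X)=0$ for \emph{every} compactly supported vector field $X$ on $\Omega$. This class is larger than the one used by White, so our stationary varifolds are in particular stationary in his sense. This holds in whichever formulation one uses: stationary for vector fields compactly supported in the interior, stationary in $\Omega$, or stationary for fields tangent to $\partial\closedthreeball$.

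\textbf{Step 2: match the boundary condition.} We must check that ``mean curvature of $\sphere^2$ everywhere positive'' means, with the paper's sign conventions, that the mean curvature vector of $\sphere^2$ points into the ball. We therefore fix the normal of $\sphere^2$ used for the mean convexity hypothesis. Using the Gauss formula \eqref{eq.GW_formula}, $\overline\nabla_XY=\nabla_XY-A(X,Y)\nu$, we confirm that $H>0$ corresponds to the inward-pointing mean curvature vector $-H\nu$, or to $+H\nu$ if $\nu$ is the inward normal.

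\textbf{Step 3: conclude, or argue directly if needed.} With both hypotheses matched, White's theorem gives $\operatorname{supp}\|V\|\cap\sphere^2=\emptyset$. Should one prefer a self-contained argument, the fallback is the classical barrier argument. Let $d$ be the distance to $\sphere^2$ and consider the foliation by the level sets $\{d=s\}$ for small $s$; by continuity these still have positive mean curvature. Suppose a point $p\in\sphere^2$ lies in the support. Take $X=\phi(d)\nabla d$ with $\phi\geq0$ supported in $[0,s_0)$ and decreasing. Then $\nabla^P\!\cdot X$ equals $\phi'(d)|\nabla^P d|^2$ plus $\phi(d)$ times the trace over $P$ of the Hessian of $d$. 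The positive mean curvature of the level sets makes the Hessian term negative when $P$ is nearly tangent to the level sets. The hard case is when $P$ is transverse to the level sets, and there White's argument is genuinely needed. This is the main obstacle: it requires White's clever choice of a function of $d$ together with a small additional term, which makes $\delta V(X)<0$ whenever $\|V\|$ charges a neighbourhood of the boundary. That contradicts stationarity. I would simply cite \cite{white2010maximum} for this step rather than reproduce it.
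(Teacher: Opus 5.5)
Your proposal is correct and is essentially the paper's argument: the statement is taken directly from \cite[Theorem 1]{white2010maximum}, and, as in the paper's remark, the only thing to check is that stationarity with respect to every vector field on the extension $\Omega$ implies White's hypothesis. One small correction to your paraphrase: White's hypothesis is $\delta V(X)\geq 0$ for admissible fields $X$ pointing into the ball along $\sphere^2$, not stationarity for fields tangent to the boundary (the tangential version would be false, e.g.\ for the free-boundary flat equatorial disk in the Euclidean ball); since stationarity in $\Omega$ implies either formulation, your Step 1 implication still holds and nothing in the argument breaks.
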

\begin{remark} We note that stability in White's theorem is expressed in \cite{white2010maximum} in terms of variations with respect to vector fields which are admissible in the sense that they point inward from the boundary of $X$. By linearity of the first variation, the condition of admissability is trivially bypassed by working in the extension $\Omega$.
\end{remark}

\section{Conformal change and intrinsic estimates on minimal disks}

Let $D$ be an immersed minimal disk in $(\closedthreeball,\overline g)$ bounded by a closed curve $c$ of total curvature at most $4\pi$. Recall that $g$ denotes the induced metric on $D$, $\nabla$ denotes its Levi--Civita connection, and $\Delta=\Delta^g$ denotes its Laplace--Beltrami operator. We also denote by $\Delta=-\nabla^\ast\nabla$ the rough Laplacian  acting on tensor fields.

\subsection{Conformal change of metric}

Following the original idea of Nitsche and Barbosa--do Carmo, we introduce the function
\begin{equation}
\label{eqn:NitscheDefinitionOfPhi}
\phi:=1+\frac12\|A\|^2\ ,
\end{equation}
and the conformally equivalent metric
\begin{equation}
\label{eqn:DefinitionOfH}
h:=\phi g\ .
\end{equation}

The key point is that, in $2$-dimensions, the Laplace--Beltrami operator satisfies the conformal transformation law
\begin{equation}
\label{eq:conformal-laplacian}
\Delta^h = \frac{1}{\phi}\Delta^g\ ,
\end{equation}
so that
\begin{equation}
Ju=(2-\overlineRic_{\nu\nu})\,u-\phi(\Delta^h+2) u\ .
\end{equation}

Thus, when the ambient space has negative Ricci curvature, proving stability of the Jacobi operator reduces to proving a spectral estimate for the Laplace--Beltrami operator of the conformal metric $h$. This estimate is in turn obtained via a simple comparison lemma (Proposition~\ref{proposition:ComparisonLemma}) once the following geometric bounds have been established.
\begin{itemize}
\item $\sigma_h\leq 1$,\ \text{and}
\item $\operatorname{Area}_h(D)\leq 2\pi$.
\end{itemize}

In our present framework, by \cite[Theorem 1.159]{Besse1987} with $h=e^{2\psi}g$ and $\psi:\log(\phi)/2$, the curvature of $h$ is given by
\begin{equation}\label{eq:conformal-curvature}
 \sigma_h=\frac1\phi\left(\sigma-\frac12\Delta^g\log\phi\right)\ .
\end{equation}
Estimating the curvature of $h$ thus reduces to controlling $\Delta^g\log\phi$. However, by Leibniz's rule,
\begin{equation}\label{eq:leibniz}
\tfrac12\Delta^g\|A\|^2=\langle\Delta^gA,A\rangle+\|\nabla A\|^2\ ,
\end{equation}
The main challenge is thus to compute the rough Laplacian of the second fundamental form. To this end, we use the Simons identity (see \cite[Theorem 4.2.1]{Simons68}) adapted to the case where the ambient manifold has non-constant curvature.

\subsection{Simons identity for the second fundamental form}

The Simons identity provides a formula for the Laplacian $\Delta A$ of the second fundamental form along each principal direction.
\begin{lemma}[Simons identity]\label{lem:Simons}
Let $D$ be a minimal disk. At a point $p$, in an adapted frame,
\begin{equation}\label{eq:Simons-diag}
(\Delta A)_{ii}
=\lambda_i\bigl(2\sigma+2\overline \sigma-\overlineRic_{\nu \nu}\bigr)
+(\overline\nabla\overlineRic)_{i\nu i}-(\overline\nabla\overlineRic)_{j\nu j}\ ,
\end{equation}
where $j\neq i$.
\end{lemma}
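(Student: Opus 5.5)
We will follow the classical derivation of Simons' identity: commute covariant derivatives using Codazzi--Mainardi \eqref{Codazzi_Mainardi1} and the Ricci identity \eqref{eq.ricci_identity}, and keep track of the ambient curvature terms that vanish in constant curvature. Work at $p$ in an adapted frame, so that $\nabla_{e_i}e_j(p)=0$ and $[e_1,e_2](p)=0$. Write
\[
(\Delta A)_{ii}=\sum_k\nabla_k\nabla_kA_{ii}.
\]
The first step uses Codazzi in the form $\nabla_kA_{ii}=\nabla_iA_{ki}+\overlineR(e_k,e_i,\nu,e_i)$. We differentiate this once more along $e_k$. The derivative of the curvature term must be computed with the ambient connection via the Gauss formula \eqref{eq.GW_formula}. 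We get $\nabla_k[\overlineR(e_k,e_i,\nu,e_i)]=(\overline\nabla_k\overlineR)_{ki\nu i}$ plus terms produced by $\overline\nabla_{e_k}\nu=\lambda_ke_k$ and $\overline\nabla_{e_k}e_j=-\lambda_k\delta_{kj}\nu$. These extra terms are of the form $\lambda\cdot\overline{\sect}$; they contribute $\lambda_k\overlineR_{kiki}$-type and $\lambda_i\overlineR_{ki\nu\nu}$-type terms, the latter vanishing.

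The second step commutes $\nabla_k\nabla_iA_{ki}=\nabla_i\nabla_kA_{ki}+(\text{curvature})$ using \eqref{eq.ricci_identity} together with the two-dimensional form \eqref{eq.Riemann_tensor_dim2}. This produces $\sigma$-terms of the form $\sigma(\lambda_i-\lambda_k)$, which sum to $2\sigma\lambda_i$ when $H=0$. Next, $\sum_k\nabla_kA_{ki}$ is rewritten by Codazzi again as $\nabla_i(\operatorname{tr}A)+\sum_k\overlineR(e_k,e_i,\nu,e_k)$-type terms. Here $\operatorname{tr}A=0$ by minimality, and by Lemma \ref{lem:star}(3) the curvature sum is $\pm\overlineRic_{\nu i}$. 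Hence $\nabla_i\nabla_kA_{ki}$ reduces to $\nabla_i$ of a Ricci component. Differentiating it ambiently gives $(\overline\nabla\overlineRic)_{i\nu i}$ plus terms $\lambda_i\overlineRic_{ii}-\lambda_i\overlineRic_{\nu\nu}$, coming from the derivatives of $\nu$ and $e_i$.

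The third step collects terms. The ambient derivative term $\sum_k(\overline\nabla_k\overlineR)_{ki\nu i}$ must be expressed through $\overline\nabla\overlineRic$. For this we use the contracted second Bianchi identity $\sum_a(\overline\nabla_a\overlineR)_{abcd}=(\overline\nabla_c\overlineRic)_{bd}-(\overline\nabla_d\overlineRic)_{bc}$ (up to sign conventions). The sum over $k\in\{1,2\}$ omits $a=\nu$, but $(\overline\nabla_\nu\overlineR)_{\nu i\nu i}$ is handled because in dimension $3$ the Riemann tensor is determined by Ricci. Indeed, $\overlineR_{\nu i\nu i}=-\overline{\sect}_{i\nu}=\overlineR_{jiji}-\overlineRic_{ii}$-type relations, obtained by differentiating Lemma \ref{lem:star}(1),(2), express it as a combination of derivatives of Ricci components. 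This should yield exactly $(\overline\nabla\overlineRic)_{i\nu i}-(\overline\nabla\overlineRic)_{j\nu j}$ after using $\nu$-derivatives and the symmetry of $\overline\nabla\overlineRic$ in its last two slots.

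Finally, the zeroth-order terms, namely the $\lambda_k\overline{\sect}$ terms, the $\lambda_i\overlineRic_{ii}$ terms and the $\sigma$ terms, are simplified using $\lambda_j=-\lambda_i$ and Lemma \ref{lem:star}. They should combine to $\lambda_i(2\sigma+2\overline\sigma-\overlineRic_{\nu\nu})$. As a sanity check, in constant curvature $c$ this gives $\lambda_i(2\sigma+2c-2c)=2\sigma\lambda_i$, which is the classical Simons formula $\Delta A=-\|A\|^2A+\ldots$ via $\sigma=\kappa_{ext}+c$.

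The main obstacle is the bookkeeping of the ambient-derivative terms: converting $\sum_{k\le2}(\overline\nabla_k\overlineR)_{ki\nu i}$ into Ricci derivatives via the three-dimensional decomposition, with the correct signs. We would fix this by doing the computation once for $i=1$ in fully expanded form and checking it against the hyperbolic case and against the trace identity $(\Delta A)_{11}+(\Delta A)_{22}=0$. That identity is consistent with the antisymmetric form of the right-hand side.
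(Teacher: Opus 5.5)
Your architecture matches the paper's. You use Codazzi to write $\nabla_k\nabla_kA_{ii}$ as $\nabla_k\nabla_iA_{ki}$ plus the $e_k$-derivative of $\overlineR(e_k,e_i,\nu,e_i)$, and the Ricci identity to get the commutator $\sigma(\lambda_i-\lambda_k)$. Codazzi and minimality then reduce $\sum_k\nabla_kA_{ki}$ to $\overlineRic_{\nu i}$, and you differentiate that Ricci component ambiently; this second step is exactly the paper's term (II). The problems are in how you treat the remaining curvature term.

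First, the frame terms are miscounted. For $k\neq i$, the derivative falling on the first slot, $\overline\nabla_{e_k}e_k=-\lambda_k\nu$, produces $-\lambda_k\overlineR(\nu,e_i,\nu,e_i)=\lambda_k\overline{\sect}_{i\nu}$. This is neither of your two types and does not vanish, so the full frame contribution is $\lambda_k(\overline{\sect}_{i\nu}-\overline\sigma)$. If you keep only the $\lambda_k\overlineR_{kiki}$ term, your zeroth-order terms add up to $\lambda_i(2\sigma+2\overline\sigma-\overline{\sect}_{j\nu})$ instead of $\lambda_i(2\sigma+2\overline\sigma-\overlineRic_{\nu\nu})$.

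Second, the Bianchi detour does not close as you describe it. After subtracting the $a=\nu$ term and rewriting it through the three-dimensional expression of $\overlineR$ in terms of $\overlineRic$, you are left with
\[
(\overline\nabla_i\overlineRic)_{\nu i}+\tfrac12\bigl((\overline\nabla_\nu\overlineRic)_{\nu\nu}-(\overline\nabla_\nu\overlineRic)_{ii}-(\overline\nabla_\nu\overlineRic)_{jj}\bigr).
\]
Symmetry of $\overline\nabla\overlineRic$ in its last two slots cannot turn these normal derivatives into tangential ones. You would also need the twice-contracted identity $\operatorname{div}\overlineRic=\tfrac12\dd\,\overline{\mathrm{scal}}$. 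Only then does the whole expression collapse to $-(\overline\nabla\overlineRic)_{j\nu j}$; the $+(\overline\nabla\overlineRic)_{i\nu i}$ comes from your second step, not from this term.

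The paper avoids both issues by handling this term exactly as you handle the other one. Since $\overlineR_{ii\nu i}\equiv0$, only $k=j$ contributes. By Lemma \ref{lem:star}(3), the identity $\overlineR(e_j,e_i,\nu,e_i)=-\overlineRic(\nu,e_j)$ holds on a neighbourhood of $p$. The Gauss formula then gives, at $p$,
\[
e_j\bigl(\overlineR(e_j,e_i,\nu,e_i)\bigr)=-(\overline\nabla\overlineRic)_{j\nu j}+\lambda_j\bigl(\overlineRic_{\nu\nu}-\overlineRic_{jj}\bigr)=-(\overline\nabla\overlineRic)_{j\nu j}+\lambda_j\bigl(\overline{\sect}_{i\nu}-\overline\sigma\bigr),
\]
where the last equality uses Lemma \ref{lem:star}(1),(2). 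This automatically contains the term you dropped and involves no normal derivatives. With this replacement, your computation yields the stated identity.
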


\begin{proof}
By definition of the rough Laplacian, in any adapted frame at $p$,
\begin{equation*}
(\Delta A)(p)_{ii}=\sum_{k=1}^2(\nabla_k\nabla_k A_{ii})(p)\ ,
\end{equation*}
where here and in what follows we use the notation of \eqref{eqn:CovariantDerivativeTerminology}. We now work only at $p$, and, for ease of terminology, we suppress $p$ in what follows.

For all $a$, $b$, $c$, denote
\begin{equation*}
\tilde{R}_{abc} := \overlineR_{ab\nu c}\ .
\end{equation*}
Now fix indices $i$ and $k$. By the Codazzi--Mainardi equation \eqref{Codazzi_Mainardi1},
\begin{equation*}
\begin{aligned}
\nabla_k\nabla_k A_{ii} &= \nabla_k\big(\nabla_i A_{ki} + \tilde{R}_{kii}\big)\\
&= \nabla_i\nabla_k A_{ki} + \big(\nabla_k\nabla_i - \nabla_i\nabla_k)A_{ki} + \nabla_k\tilde{R}_{kii}\ .
\end{aligned}
\end{equation*}
Applying the Codazzi--Mainardi equation \eqref{Codazzi_Mainardi1} again yields
\begin{equation*}
\nabla_k\nabla_k A_{ii} = \underbrace{\nabla_i\nabla_i A_{kk}}_{(\mathrm{I})} + \underbrace{\nabla_i\tilde{R}_{kik}}_{(\mathrm{II})} +
\underbrace{\big(\nabla_k\nabla_i - \nabla_i\nabla_k\big)A_{ki}}_{(\mathrm{III})} + \underbrace{\nabla_k\tilde{R}_{kii}}_{(\mathrm{IV})}\ .
\end{equation*}
We now address each of the above terms separately.\\

\noindent\emph{Term $(\mathrm I)$:} Since $D$ is minimal,
\begin{equation*}
\sum_{k=1}^2\nabla_i\nabla_i A_{kk} = 2\nabla_i\nabla_i H = 0\ .
\end{equation*}

\noindent\emph{Terms $(\mathrm{II})$ and $(\mathrm{IV})$:} Note that $(\mathrm{IV})$ is obtained from $(\mathrm{II})$ by inverting the roles of $i$ and $k$ and multiplying by $(-1)$. In particular, their sum vanishes unless $i\neq k$. Suppose now that $i\neq k$. By Lemma \ref{lem:star}, Item $(3)$, near $p$,
\begin{equation*}
\tilde{R}_{kik} = \overlineR_{ki\nu k} = \overlineRic_{i\nu}\ .
\end{equation*}
Since the frame is adapted at $p$, it follows that
\begin{equation*}
(\mathrm{II}) = \overline{\nabla}_i\overlineRic_{i\nu} - \lambda_i\overlineRic_{\nu\nu} + \lambda_i\overlineRic_{ii}\ .
\end{equation*}
Thus
\begin{equation*}
(\mathrm{II}) + (\mathrm{IV}) = \big(\overline{\nabla}_i\overlineRic_{i\nu} - \overline{\nabla}_k\overlineRic_{k\nu}\big)
-(\lambda_i-\lambda_k)\overlineRic_{\nu\nu} + \lambda_i\overlineRic_{ii} - \lambda_k\overlineRic_{kk}\ .
\end{equation*}
By minimality, $\lambda_i=-\lambda_k$. Thus, since $\{i,k\}=\{1,2\}$,
\begin{equation*}
(\mathrm{II}) + (\mathrm{IV}) = \big(\overline{\nabla}_i\overlineRic_{i\nu} - \overline{\nabla}_k\overlineRic_{k\nu}\big) - 2\lambda_i\overlineRic_{\nu\nu} + \lambda_i\big(\overlineRic_{11} + \overlineRic_{22}\big)\ .
\end{equation*}
Hence, by Lemma \ref{lem:star}, Items $(1)$ and $(2)$,
\begin{equation*}
(\mathrm{II}) + (\mathrm{IV}) = \big(\overline{\nabla}_i\overlineRic_{i\nu} - \overline{\nabla}_k\overlineRic_{k\nu}\big) + \lambda_i(2\overline{\sigma}-\overlineRic_{\nu\nu})\ .
\end{equation*}
Thus, for general $i$ and $k$,
\begin{equation*}
(\mathrm{II}) + (\mathrm{IV}) = \big(\overline{\nabla}_i\overlineRic_{i\nu} - \overline{\nabla}_k\overlineRic_{k\nu}\big) + \lambda_i(2\overline{\sigma}-\overlineRic_{\nu\nu})(1-\delta_{ik})\ .
\end{equation*}

\noindent\emph{Term $(\mathrm{III})$:} Since the frame is adapted at $p$, the Ricci identity \eqref{eq.ricci_identity} yields, for all $i$, $k$,
\begin{equation*}
(\mathrm{III}) = \nabla_k\nabla_iA_{ki} - \nabla_i\nabla_k A_{ik} = -A(R(e_k,e_i)e_k,e_i) - A(e_k,R(e_k,e_i)e_i)\ .
\end{equation*}
Thus, by \eqref{eq.Riemann_tensor_dim2},
\begin{equation*}
(\mathrm{III}) = \sigma(\lambda_i-\lambda_k)\ .
\end{equation*}
By minimality, when $i\neq k$, $\lambda_i=-\lambda_k$, so that
\begin{equation*}
(\mathrm{III}) = 2\lambda_i\sigma(1-\delta_{ik})\ .
\end{equation*}

The result now follows upon summing these terms over all $k$.
\end{proof}

\begin{corollary}\label{cor:Simons-scalar}
Let $D\subset\closedthreeball$ be a minimal disk. Then, in an adapted frame at $p$,
\begin{equation}\label{eq:Simons-scalar}
\frac12\Delta\|A\|^2=
\|\nabla A\|^2+
\bigl(2\sigma+2\overline\sigma-\overlineRic_{\nu\nu}\bigr)\|A\|^2+
2\,\sum_i\lambda_i(\overline\nabla\overlineRic)_{i\nu i}.
\end{equation}
\end{corollary}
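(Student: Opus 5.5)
The plan is to combine the Leibniz rule \eqref{eq:leibniz} with the diagonal Simons identity of Lemma~\ref{lem:Simons}. By \eqref{eq:leibniz},
\begin{equation*}
\tfrac12\Delta\|A\|^2=\langle\Delta A,A\rangle+\|\nabla A\|^2\ ,
\end{equation*}
so it suffices to compute $\langle\Delta A,A\rangle$ at $p$ in an adapted frame.

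Since the frame is adapted at $p$, the tensor $A$ is diagonal there, with $A_{11}=\lambda_1$, $A_{22}=\lambda_2$ and $A_{12}=A_{21}=0$. Hence only the diagonal components of $\Delta A$ contribute, and
\begin{equation*}
\langle\Delta A,A\rangle=\sum_i\lambda_i(\Delta A)_{ii}\ .
\end{equation*}
This is the point to be careful about: it is the off-diagonal entries of $\Delta A$ that we never need, precisely because $A_{12}=0$ at $p$.

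Substituting \eqref{eq:Simons-diag}, the first term gives
\begin{equation*}
\sum_i\lambda_i^2\bigl(2\sigma+2\overline\sigma-\overlineRic_{\nu\nu}\bigr)=\bigl(2\sigma+2\overline\sigma-\overlineRic_{\nu\nu}\bigr)\|A\|^2\ .
\end{equation*}
The derivative terms give
\begin{equation*}
\lambda_1\bigl((\overline\nabla\overlineRic)_{1\nu1}-(\overline\nabla\overlineRic)_{2\nu2}\bigr)+\lambda_2\bigl((\overline\nabla\overlineRic)_{2\nu2}-(\overline\nabla\overlineRic)_{1\nu1}\bigr)\ .
\end{equation*}
By minimality $\lambda_2=-\lambda_1$, so this equals
\begin{equation*}
2\lambda_1(\overline\nabla\overlineRic)_{1\nu1}+2\lambda_2(\overline\nabla\overlineRic)_{2\nu2}=2\sum_i\lambda_i(\overline\nabla\overlineRic)_{i\nu i}\ .
\end{equation*}
Adding $\|\nabla A\|^2$ yields \eqref{eq:Simons-scalar}.

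There is no real obstacle beyond bookkeeping. The only checks are that the index convention of $(\overline\nabla\overlineRic)_{i\nu i}$ matches that of Lemma~\ref{lem:Simons}, and that the inner product on symmetric $2$-tensors is the one for which $\|A\|^2=\lambda_1^2+\lambda_2^2$.
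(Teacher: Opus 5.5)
Your proposal is correct and follows the paper's proof. Both use Leibniz's rule, reduce $\langle\Delta A,A\rangle$ to $\sum_i\lambda_i(\Delta A)_{ii}$ because $A$ is diagonal at $p$, substitute the Simons identity of Lemma~\ref{lem:Simons}, and use $\lambda_2=-\lambda_1$ to turn the $\overline\nabla\overlineRic$ terms into $2\sum_i\lambda_i(\overline\nabla\overlineRic)_{i\nu i}$.
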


\begin{proof}
Indeed, by Leibniz's formula,
\begin{equation*}
\frac{1}{2}\Delta\|A\|^2 = \|\nabla A\|^2 + \langle\Delta A,A\rangle\ .
\end{equation*}
The matrix $A$ is diagonal at $p$ and, furthermore, by minimality, $\lambda_1+\lambda_2=0$. Thus, bearing in mind Simons' formula \eqref{eq:Simons-diag},
\begin{eqnarray*}
\frac12\Delta\|A\|^2
&=&\|\nabla A\|^2+\sum_i\lambda_i\Delta A_{ii}\\
&=&\|\nabla A\|^2+\sum_i\lambda_i^2(2\sigma+2\bar \sigma-\overlineRic_{\nu\nu})+\sum_i\lambda_i(\overline\nabla\overlineRic)_{i\nu i}-\sum_j(-\lambda_j)(\overline\nabla\overlineRic)_{j\nu j}\\
&=&\|\nabla A\|^2+(2\sigma+2\bar \sigma-\overlineRic_{\nu\nu})\|A\|^2+2\sum_i\lambda_i(\overline\nabla\overlineRic)_{i\nu i}\ ,
\end{eqnarray*}
as desired.
\end{proof}

As a reality check, we may verify our this formula in the case where the ambient manifold has constant sectional
curvature $-1$. Here,
\begin{equation*}
\overline\sigma=-1\ ,\qquad
\overlineRic_{\nu\nu}=-2\ ,\qquad\text{and}\qquad
\overline\nabla\overlineRic=0\ .
\end{equation*}
Corollary~\ref{cor:Simons-scalar} thus reduces to
\begin{equation*}
\frac{1}{2}\Delta\|A\|^2
=\|\nabla A\|^2+2\sigma\|A\|^2\ .
\end{equation*}
By Gauss' equation and minimality,
\begin{equation*}
\sigma=\overline\sigma+\kappa_{\mathrm{ext}}=-1-\frac12\|A\|^2\ .
\end{equation*}
Thus
\begin{equation*}
\frac{1}{2}\Delta\|A\|^2=
\|\nabla A\|^2-\left(2+\|A\|^2\right)\|A\|^2\ ,
\end{equation*}
that is,
\begin{equation*}
\frac{1}{2}\Delta\|A\|^2=\|\nabla A\|^2-\|A\|^4-2\|A\|^2\ ,
\end{equation*}
which is the precisely the classical Simons identity for minimal surfaces in hyperbolic $3$--space: \cite{Simons68}.

\subsection{Formula for the curvature of the conformal metric}
We can now compute the curvature of the metric $h=\phi g$ where we recall $\phi=1+\frac12\|A\|^2$.

\begin{lemma}[Curvature of the conformal metric]\label{lem:exact-Kh}
The metric $h=\phi g$ satisfies
\begin{equation}
\label{eq:exact-Kh}
\sigma_h=
\frac{\sigma}{\phi}-\frac{\|A\|^2}{2\phi^2}\big(2\sigma+2\overline\sigma-\overlineRic_{\nu\nu}\big)
-\frac{1}{\phi^2}\sum_i\lambda_i(\overline\nabla\overlineRic)_{i\nu i}
-\frac{\|\nabla A\|^2}{2\phi^2}
+\frac{\left|\dd\|A\|^2\right|^2}{8\phi^3}\ .
\end{equation}
\end{lemma}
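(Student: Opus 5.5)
The plan is a direct computation: insert Corollary~\ref{cor:Simons-scalar} into the conformal curvature formula \eqref{eq:conformal-curvature}. First I expand the Laplacian of $\log\phi$ by the chain rule:
\begin{equation*}
\Delta^g\log\phi=\frac{\Delta^g\phi}{\phi}-\frac{|\dd\phi|^2}{\phi^2}\ .
\end{equation*}
Since $\phi=1+\frac12\|A\|^2$, we have $\Delta^g\phi=\frac12\Delta^g\|A\|^2$ and $\dd\phi=\frac12\dd\|A\|^2$. Hence $|\dd\phi|^2=\frac14\left|\dd\|A\|^2\right|^2$.

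Next I substitute Corollary~\ref{cor:Simons-scalar} for $\frac12\Delta^g\|A\|^2$. The left-hand side of that identity is the Laplacian of a scalar function, so although it was computed in an adapted frame at $p$, it holds pointwise everywhere. This gives
\begin{equation*}
\frac12\Delta^g\log\phi=\frac{1}{2\phi}\Bigl(\|\nabla A\|^2+\bigl(2\sigma+2\overline\sigma-\overlineRic_{\nu\nu}\bigr)\|A\|^2+2\sum_i\lambda_i(\overline\nabla\overlineRic)_{i\nu i}\Bigr)-\frac{\left|\dd\|A\|^2\right|^2}{8\phi^2}\ .
\end{equation*}

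Finally, I plug this into \eqref{eq:conformal-curvature}, namely $\sigma_h=\frac1\phi\bigl(\sigma-\frac12\Delta^g\log\phi\bigr)$, and distribute the factor $1/\phi$. Term by term this produces:
\begin{itemize}
\item[] $\sigma/\phi$ from the first summand;
\item[] $-\frac{\|A\|^2}{2\phi^2}\bigl(2\sigma+2\overline\sigma-\overlineRic_{\nu\nu}\bigr)$ from the curvature term;
\item[] $-\frac{1}{\phi^2}\sum_i\lambda_i(\overline\nabla\overlineRic)_{i\nu i}$ from the $\overline\nabla\overlineRic$ term, using $2/(2\phi^2)=1/\phi^2$;
\item[] $-\frac{\|\nabla A\|^2}{2\phi^2}$ from the $\|\nabla A\|^2$ term;
\item[] $+\frac{\left|\dd\|A\|^2\right|^2}{8\phi^3}$ from the gradient term.
\end{itemize}
These are exactly the terms of \eqref{eq:exact-Kh}.

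There is no real obstacle. The only points needing care are the following. The sign convention $\Delta^g=\tr_g\nabla^2$ must agree between \eqref{eq:conformal-curvature} and Corollary~\ref{cor:Simons-scalar}; both use the analyst's negative Laplacian. The factor $\frac12$ in \eqref{eq:conformal-curvature} must be applied consistently to the $\frac12\Delta\|A\|^2$ expression. It also helps to remark that the expression $\sum_i\lambda_i(\overline\nabla\overlineRic)_{i\nu i}$ equals $\langle A,(\overline\nabla_\cdot\overlineRic)(\nu,\cdot)\rangle$, so it is frame-independent and the identity holds globally.
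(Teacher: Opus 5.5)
Your proposal is correct and is essentially the paper's proof: expand $\Delta^g\log\phi=\frac{1}{2\phi}\Delta^g\|A\|^2-\frac{1}{4\phi^2}\left|\dd\|A\|^2\right|^2$, substitute Corollary~\ref{cor:Simons-scalar} for $\frac12\Delta^g\|A\|^2$, and insert the result into \eqref{eq:conformal-curvature}. Your handling of the factors of $\frac12$ and of the convention $\Delta^g=\tr_g\nabla^2$ is correct.
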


\begin{proof}
Indeed, since $\phi=1+\frac12\|A\|^2$,
\begin{equation*}
\Delta\log\phi
=
\frac{1}{2\phi}\Delta\|A\|^2-\frac{1}{4\phi^2}\left|\dd\|A\|^2\right|^2\ .
\end{equation*}
Thus, by \eqref{eq:conformal-curvature},
\begin{eqnarray*}
 \sigma_h
 &=&\frac1\phi\left(\sigma-\frac12\Delta\log\phi\right) \\
 &=&\frac{\sigma}{\phi}
   -\frac{1}{4\phi^2}\Delta\|A\|^2
   +\frac{1}{8\phi^3}\left|\dd\|A\|^2\right|^2\\
 &=&  \frac{\sigma}{\phi}
 -
 \frac{\|A\|^2}{2\phi^2}
 \bigl(2\sigma+2\overline\sigma-\overlineRic_{\nu\nu}\bigr)
 -
 \frac{1}{\phi^2}\sum_i\lambda_i(\overline\nabla\overlineRic)_{i\nu i}   -
 \frac{\|\nabla A\|^2}{2\phi^2}
 +
 \frac{\left|\dd\|A\|^2\right|^2}{8\phi^3}\ ,
\end{eqnarray*}
as desired.
\end{proof}

\subsection{A refined Kato inequality} It remains only to control the contribution of the term $\left|d\|A\|^2\right|^2$. To this end, we prove a refined version of the Kato inequality. Recall the standard Kato inequality for tensors (see \cite{Kato,Calderbank_Gauduchon_Herzlich})
\begin{equation*}
|\dd\|T\||\leq\|\nabla T\|\ .
\end{equation*}
In the case of the second fundamental form, this yields
\begin{equation*}
|\dd\|A\|^2|^2\leq 4\|A\|^2\|\nabla A\|^2\ .
\end{equation*}
Our refinement of the Kato inequality capitalizes on the Codazzi constraint. Indeed, the defect of symmetry of $\nabla A$ is precisely by the mixed Ricci covector $\overlineRic(\nu,\cdot)|_{TD}$ which is of zero'th order, and may thus be controlled.

\begin{lemma}[Refined Kato inequality]\label{lem.refined_Kato}
For every $r>0$,
\begin{equation}\label{eq:Kato-general}
\left|\dd\|A\|^2\right|^2
\leq
2(1+r)\|A\|^2\|\nabla A\|^2
+4(1+r^{-1})\|A\|^2\sum_i|\overlineRic_{\nu i}|^2\ .
\end{equation}
In particular, with $\phi=1+\frac12\|A\|^2$ and $r=\frac{2}{\|A\|^2}$,
\begin{equation}\label{eq:Kato-phi}
-\frac{\|\nabla A\|^2}{2\phi^2}
+\frac{\left|\dd\|A\|^2\right|^2}{8\phi^3}
\leq
\frac{\|A\|^2}{2\phi^2}\sum_i|\overlineRic_{\nu i}|^2\ .
\end{equation}
\end{lemma}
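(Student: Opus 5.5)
We work pointwise at $p\in D$ in an adapted frame, so that $A=\mathrm{diag}(\lambda,-\lambda)$ with $\lambda=\lambda_1=-\lambda_2$ by minimality, and $\|A\|^2=2\lambda^2$. If $A(p)=0$ both sides of \eqref{eq:Kato-general} vanish at $p$, and \eqref{eq:Kato-phi} reduces to $-\|\nabla A\|^2/(2\phi^2)\leq 0$. We may therefore assume $\lambda\neq0$. The plan is to write both $\dd\|A\|^2$ and $\|\nabla A\|^2$ in terms of the two numbers $a:=\nabla_1A_{11}$ and $b:=\nabla_2A_{11}$, using the Codazzi--Mainardi equation \eqref{Codazzi_Mainardi1}. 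The defect of symmetry will then be absorbed by an elementary weighted Young inequality.

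\emph{Computing $\dd\|A\|^2$.} Since $\tr A=0$, we have $\nabla_kA_{22}=-\nabla_kA_{11}$. Since $A$ is diagonal at $p$,
\begin{equation*}
\dd\|A\|^2(e_k)=2\sum_{i,j}A_{ij}\nabla_kA_{ij}=2\lambda(\nabla_kA_{11}-\nabla_kA_{22})=4\lambda\nabla_kA_{11}\ .
\end{equation*}
Hence
\begin{equation*}
\left|\dd\|A\|^2\right|^2=16\lambda^2(a^2+b^2)=8\|A\|^2(a^2+b^2)\ .
\end{equation*}

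\emph{Computing $\|\nabla A\|^2$ via Codazzi.} By symmetry and trace-freeness of $A$,
\begin{equation*}
\|\nabla A\|^2=2(a^2+b^2)+2\big((\nabla_1A_{12})^2+(\nabla_2A_{12})^2\big)\ .
\end{equation*}
The Codazzi--Mainardi equation, combined with Lemma \ref{lem:star}, Item $(3)$, gives
\begin{equation*}
\nabla_1A_{12}=b+\rho_2\qquad\text{and}\qquad\nabla_2A_{12}=-a+\rho_1\ ,
\end{equation*}
where each $\rho_i$ equals $\pm\overlineRic_{\nu i}$. Only $|\rho_i|$ will matter, so the signs need not be tracked carefully, and this bookkeeping is routine. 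The key step is the inequality $x^2\leq(1+r)(x+y)^2+(1+r^{-1})y^2$, valid for every $r>0$, applied with $x=b$, $y=\rho_2$ and with $x=-a$, $y=\rho_1$. This yields
\begin{equation*}
a^2+b^2\leq(1+r)\big((\nabla_1A_{12})^2+(\nabla_2A_{12})^2\big)+(1+r^{-1})\sum_i|\overlineRic_{\nu i}|^2\ .
\end{equation*}
We now split $8\|A\|^2(a^2+b^2)=4\|A\|^2(a^2+b^2)+4\|A\|^2(a^2+b^2)$ and apply this bound to the second copy only. Since $1+r\geq1$, the first copy is bounded by $4(1+r)\|A\|^2(a^2+b^2)$. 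Combining the two copies, we obtain
\begin{equation*}
\left|\dd\|A\|^2\right|^2\leq 2(1+r)\|A\|^2\|\nabla A\|^2+4(1+r^{-1})\|A\|^2\sum_i|\overlineRic_{\nu i}|^2\ ,
\end{equation*}
which is \eqref{eq:Kato-general}.

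\emph{Specialization.} Take $r=2/\|A\|^2$. Then $1+r=2\phi/\|A\|^2$ and $1+r^{-1}=\phi$. Dividing \eqref{eq:Kato-general} by $8\phi^3$, the gradient term becomes exactly $\|\nabla A\|^2/(2\phi^2)$, which cancels $-\|\nabla A\|^2/(2\phi^2)$. The remaining term is $\|A\|^2\sum_i|\overlineRic_{\nu i}|^2/(2\phi^2)$, which gives \eqref{eq:Kato-phi}.

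The only delicate point is the Codazzi step, namely correctly identifying the mixed derivatives $\nabla_1A_{12}$ and $\nabla_2A_{12}$ with $\pm a$ and $\pm b$ up to the zero'th order curvature terms $\overlineR_{ki\nu i}$. Once that is done, everything else is algebra.
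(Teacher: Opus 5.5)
Your proof is correct and follows essentially the same route as the paper. Your $(a,b)$ and $(\nabla_1A_{12},\nabla_2A_{12})$ are the paper's $x$ and $y$, and your componentwise Young inequality is the paper's estimate $\|Jy+z\|^2\leq(1+r)\|y\|^2+(1+r^{-1})\|z\|^2$ derived from the Codazzi relation $x=Jy+z$, with the same splitting of $8\|A\|^2\|x\|^2$ into two halves. Your separate treatment of points where $A(p)=0$, where $r=2/\|A\|^2$ is undefined, is a small extra that the paper leaves implicit.
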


This yields the following upper bound for the curvature of $h$.
\begin{corollary}\label{coro_upper_bound_sigmah}
The curvature of the metric $h=\phi g$ satisfies
\begin{equation}
 \sigma_h
 \leq
 \frac{\sigma}{\phi}
 - \frac{\|A\|^2}{2\phi^2}
 \bigl(2\sigma+2\overline\sigma-\overlineRic_{\nu\nu}\bigr)
 - \frac{1}{\phi^2}\sum_i\lambda_i(\overline\nabla\overlineRic)_{i\nu i}+ \frac{\|A\|^2}{2\phi^2}\sum_i|\overlineRic_{\nu i}|^2\ .
\end{equation}
\end{corollary}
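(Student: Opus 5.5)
This corollary follows directly from Lemma~\ref{lem:exact-Kh} combined with the second estimate \eqref{eq:Kato-phi} of Lemma~\ref{lem.refined_Kato}. The plan is therefore to start from the exact formula \eqref{eq:exact-Kh} for $\sigma_h$. The first three terms on its right-hand side,
\begin{equation*}
\frac{\sigma}{\phi}-\frac{\|A\|^2}{2\phi^2}\bigl(2\sigma+2\overline\sigma-\overlineRic_{\nu\nu}\bigr)-\frac{1}{\phi^2}\sum_i\lambda_i(\overline\nabla\overlineRic)_{i\nu i}\ ,
\end{equation*}
already coincide with the first three terms of the claimed bound, so I leave them untouched. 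The only remaining work concerns the last two terms, namely the gradient terms $-\frac{\|\nabla A\|^2}{2\phi^2}+\frac{|\dd\|A\|^2|^2}{8\phi^3}$.

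For these I invoke \eqref{eq:Kato-phi}, which bounds exactly this combination by $\frac{\|A\|^2}{2\phi^2}\sum_i|\overlineRic_{\nu i}|^2$. Substituting this bound into \eqref{eq:exact-Kh} gives the stated inequality pointwise at every $p\in D$, working in an adapted frame at $p$. The right-hand side is frame-independent, since $\sum_i\lambda_i(\overline\nabla\overlineRic)_{i\nu i}$ is a trace of $A$ against a tensor and $\sum_i|\overlineRic_{\nu i}|^2$ is the squared norm of a covector.

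The one point needing care is where $\|A\|=0$. There the choice $r=2/\|A\|^2$ in Lemma~\ref{lem.refined_Kato} is undefined. At such a point, however, \eqref{eq:Kato-general} with any fixed $r$ gives $|\dd\|A\|^2|^2\leq 0$, so the gradient combination equals $-\|\nabla A\|^2/2\phi^2\leq 0$. This agrees with the right-hand side of \eqref{eq:Kato-phi}, which vanishes there. Hence \eqref{eq:Kato-phi} holds everywhere, and the corollary follows with no further obstacle.
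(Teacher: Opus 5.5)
Your proposal is correct and is exactly the paper's argument: the corollary comes from substituting the estimate \eqref{eq:Kato-phi} into the exact formula \eqref{eq:exact-Kh}, leaving the first three terms unchanged. Your extra remark about points where $\|A\|=0$ is a valid detail the paper leaves implicit: there $r=2/\|A\|^2$ is undefined, but \eqref{eq:Kato-general} forces $|\dd\|A\|^2|^2=0$.
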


\begin{proof}[Proof of Lemma \ref{lem.refined_Kato}]
Indeed, define $x:=(x_1,x_2)$, $y:=(y_1,y_2)$, and $z:=(z_1,z_2)$ by
\begin{equation*}
x_i:=\nabla_i A_{11}\ ,\qquad y_i:=\nabla_i A_{12}\ ,\qquad z_1:=\overlineRic_{\nu 1}\ ,\quad\text{and}\qquad z_2:=-\overlineRic_{\nu 2}\ .
\end{equation*}

We first show that
\begin{equation}
\label{eqn:KatoProofI}
x=Jy+z\ ,
\end{equation}
where $J$ here denotes rotation by $\pi/2$, that is
\begin{equation*}
J(y_1,y_2) = (-y_2,y_1)\ .
\end{equation*}
Indeed, by the Codazzi--Mainardi equations, and the fact that $A$ is symmetric and trace-free,
\begin{equation*}
y_2+x_1=\nabla_2 A_{12}+\nabla_1 A_{11}=\nabla_2 A_{12}-\nabla_1 A_{22}=\overline{\opR}_{21\nu2}=\overlineRic_{\nu 1}=z_1\ ,
\end{equation*}
and
\begin{equation*}
x_2-y_1=\nabla_2A_{11}-\nabla_1 A_{21}=\overline{\opR}_{21\nu1}=-\overlineRic_{\nu 2}=z_2\ ,
\end{equation*}
and \eqref{eqn:KatoProofI} follows.

We now claim that
\begin{equation}
\label{eqn:KatoProofII}
\|\nabla A\|^2 = 2\big(\|x\|^2 + \|y\|^2\big)\ .
\end{equation}
Indeed, since $A$ is symmetric and trace-free, for each $i$,
\begin{equation*}
\nabla_i A_{11}=-\nabla_i A_{22}=x_i\qquad\text{and}\qquad\nabla_i A_{12}=\nabla_i A_{21}=y_i\ .
\end{equation*}
Hence, for each $i$,
\begin{equation*}
|\nabla_i A|^2=2(x_i^2+y_i^2)\ ,
\end{equation*}
and \eqref{eqn:KatoProofII} follows upon summing over all $i$.

Since $A$ is diagonal at $p$, for each $k$,
\begin{equation*}
\dd\|A\|^2(e_k)=2\langle\nabla_{k}A,A\rangle=2\sum_i\lambda_i\nabla_k A_{ii}=4\lambda_1x_k\ .
\end{equation*}
Thus
\begin{equation*}
\left|\dd\|A\|^2\right|^2
=\sum_k(4\lambda_1 x_k)^2
=16\lambda_1^2\|x\|^2
=8\|A\|^2\|x\|^2\ .
\end{equation*}
Finally, by Cauchy--Schwarz and Young's inequality, for all $r>0$,
\begin{equation*}
\|x\|^2=\|Jy+z\|^2 =\langle Jy,Jy \rangle + 2 \langle Jy,z \rangle + \langle z,z \rangle \leq(1+r)\|y\|^2+(1+r^{-1})\|z\|^2\ ,
\end{equation*}
so that
\begin{eqnarray*}
\left|\dd\|A\|^2\right|^2&=&8\|A\|^2|x|^2\\
&=&4\|A\|^2|x|^2+4\|A\|^2|x|^2\\
&\leq &4(1+r)\|A\|^2(|x|^2+|y|^2)+4(1+r^{-1})\|A\|^2|z|^2\\
&=& 2(1+r)\|A\|^2\|\nabla A\|^2+4(1+r^{-1})\|A\|^2\sum_i|\overlineRic_{\nu i}|^2\ ,
\end{eqnarray*}
as desired.
\end{proof}

\subsection{Pinching of sectional curvature and upper bounds on the Ricci curvature}

We now assume that we have the following pinching of the sectional curvatures of $\bar g$
\begin{equation}\label{eq.pinching_sect}
-(1+\alpha)\leq\overline{\sect}\leq-1\ .
\end{equation}
The following useful estimate is specific to the $3$-dimensional case.
\begin{proposition}\label{prop_bound_ricci}
Under the pinching condition \eqref{eq.pinching_sect},
\begin{equation*}
\sum_i|\overlineRic_{\nu i}|^2\leq -(\bar\sigma+1)(\bar\sigma+1+\alpha)\ .
\end{equation*}
\end{proposition}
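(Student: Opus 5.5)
The plan is to reduce the statement to an elementary inequality about a symmetric $3\times 3$ matrix whose spectrum lies in an interval. The mechanism that makes this work is specific to dimension $3$. There, a $2$-plane is determined by its unit normal, and the sectional curvature is a quadratic form in that normal.

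\emph{Step 1 (curvature as a quadratic form in the normal).} At a point $p$, define the symmetric bilinear form
\begin{equation*}
S:=\tfrac12\,\overline{\mathrm{scal}}\,\overline g-\overlineRic\ ,
\end{equation*}
where $\overline{\mathrm{scal}}$ is the scalar curvature. I would check in an orthonormal frame $(f_1,f_2,f_3)$, using Lemma \ref{lem:star}, that for every unit vector $n$ the sectional curvature of the plane $n^\perp$ equals $S(n,n)$. For $n=f_3$ this reads $\overline{\sect}_{12}=\tfrac12(\overlineRic_{11}+\overlineRic_{22}+\overlineRic_{33})-\overlineRic_{33}$, which follows from items (1) and (2) of that lemma. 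Since every unit vector can be taken as $f_3$ of some orthonormal frame, the identity holds for all unit $n$.

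Two consequences follow. First, the pinching \eqref{eq.pinching_sect} says exactly that all eigenvalues of $S$ (with respect to $\overline g$) lie in $[a,b]$, where $a:=-(1+\alpha)$ and $b:=-1$. Second, in an adapted frame $(e_1,e_2,\nu)$ we have $\overline\sigma=S(\nu,\nu)$ and, since $\overline g(\nu,e_i)=0$,
\begin{equation*}
S(\nu,e_i)=-\overlineRic_{\nu i}\ .
\end{equation*}

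\emph{Step 2 (spectral inequality).} Since the spectrum of $S$ lies in $[a,b]$, the operator $(S-a)(S-b)$ is negative semi-definite, because each eigenvalue $\mu$ gives $(\mu-a)(\mu-b)\le0$. Writing $s:=\langle S\nu,\nu\rangle=\overline\sigma$, this yields
\begin{equation*}
0\ \ge\ \langle (S-a)(S-b)\nu,\nu\rangle=|S\nu|^2-(a+b)s+ab\ .
\end{equation*}
Decomposing $S\nu$ in the adapted frame gives
\begin{equation*}
|S\nu|^2=s^2+\sum_i S(\nu,e_i)^2=\overline\sigma^2+\sum_i|\overlineRic_{\nu i}|^2\ .
\end{equation*}
Substituting, we obtain
\begin{equation*}
\sum_i|\overlineRic_{\nu i}|^2\le -s^2+(a+b)s-ab=(b-s)(s-a)\ .
\end{equation*}
With $a=-(1+\alpha)$ and $b=-1$, the right-hand side is $(-1-\overline\sigma)(\overline\sigma+1+\alpha)=-(\overline\sigma+1)(\overline\sigma+1+\alpha)$, which is the claimed bound.

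\emph{Main obstacle.} The only real care needed is in Step 1: getting the sign conventions right so that $\overline{\sect}(n^\perp)=S(n,n)$ holds exactly, and so that the mixed entries of $S$ are $\mp\overlineRic_{\nu i}$. The sign of these mixed entries is in fact irrelevant, since they enter only squared. I would verify the identity $\overline{\sect}(n^\perp)=S(n,n)$ directly from the trace definition \eqref{eq:Ric-def} and Lemma \ref{lem:star}. Once that is settled, Step 2 is a one-line matrix inequality.
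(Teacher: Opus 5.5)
Your proof is correct and is essentially the paper's argument, written in vector rather than bivector language. Your form $S=\tfrac12\overline{\mathrm{scal}}\,\overline g-\overlineRic$ is the Hodge dual of the curvature operator $\mathcal R$ (under $\nu\leftrightarrow e_1\wedge e_2$, $e_1\leftrightarrow e_2\wedge\nu$, $e_2\leftrightarrow\nu\wedge e_1$). Your remark that every unit vector is the normal of a $2$-plane plays the role of Lemma \ref{lemma:decomposability}. Your inequality $\langle (S-a)(S-b)\nu,\nu\rangle\le 0$ is algebraically identical to the paper's centred bound $\|(\mathcal R+(1+\tfrac{\alpha}{2})\Id)\omega_3\|^2\le\tfrac{\alpha^2}{4}$, since $\bigl(\tfrac{a+b}{2}\bigr)^2-\bigl(\tfrac{b-a}{2}\bigr)^2=ab$.
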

\noindent Proposition \ref{prop_bound_ricci} follows from the spectral properties of the curvature operator in dimension $3$.

\subsubsection{The curvature operator}We first review some basic linear algebra. Let $E$ be a vector space, and let $\Lambda^2 E$ denote the space of antisymmetric bivectors over $E$. This may be viewed as the dual space of the more familiar space $\Lambda^2 E^*$ of $2$-forms over $E$. Elements of $\Lambda^2 E$ are bivectors of the form $X\wedge Y$, where
\begin{equation*}
X\wedge Y = - Y\wedge X\ .
\end{equation*}
Every inner product on $E$ induces a unique inner product on $\Lambda^2 E$ such that, for any orthonormal pair $(X,Y)$ of vectors in $E$,
\begin{equation*}
\|X\wedge Y\|^2 = 1\ .
\end{equation*}
In particular, if $(e_1,\cdots,e_d)$ is an orthonormal basis of $E$, then $(e_i\wedge e_j)_{i<j}$ is an orthonormal basis of $\Lambda^2E$.

We will use the following fact, specific to the $3$-dimensional case.
\begin{lemma}
\label{lemma:decomposability}
Let $E$ be a $3$-dimensional vector space. Every element $\omega$ of $\Lambda^2 E$ is \emph{decomposable} in the sense that it may be written in the form $\omega = X\wedge Y$ for some pair of vectors $X,Y\in E$.
\end{lemma}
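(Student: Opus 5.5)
We must prove Lemma \ref{lemma:decomposability}: every bivector in $\Lambda^2E$ with $\dim E=3$ is decomposable. The plan uses the Hodge star, or equivalently the cross product. Fix an inner product and an orientation on $E$, and choose an oriented orthonormal basis $(e_1,e_2,e_3)$. Consider the linear map
\begin{equation*}
\star:E\to\Lambda^2E\ ,\qquad e_1\mapsto e_2\wedge e_3\ ,\quad e_2\mapsto e_3\wedge e_1\ ,\quad e_3\mapsto e_1\wedge e_2\ .
\end{equation*}
Since $(e_i\wedge e_j)_{i<j}$ is a basis of $\Lambda^2E$, both spaces have dimension $3$. The map sends a basis to a basis, so it is an isomorphism. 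Hence every $\omega\in\Lambda^2E$ can be written as $\omega=\star V$ for a unique $V\in E$.

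Next, the zero bivector is trivially decomposable, since $0=X\wedge X$ for any $X$. So assume $V\neq0$. Then choose a new oriented orthonormal basis $(f_1,f_2,f_3)$ with $f_3=V/\|V\|$; this amounts to completing $f_3$ by Gram--Schmidt and swapping $f_1,f_2$ if needed to fix the orientation. We must check that $\star$ does not depend on the choice of oriented orthonormal basis. Concretely, $\star V$ is characterised by the intrinsic identity
\begin{equation*}
\langle \star V, X\wedge Y\rangle=\det(V,X,Y)\ ,
\end{equation*}
where $\det$ is the volume form. This identity can be verified on basis elements, and the volume form is invariant under oriented orthonormal changes of basis.

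With this invariance in hand, $\star f_3=f_1\wedge f_2$. Therefore
\begin{equation*}
\omega=\|V\|\,f_1\wedge f_2=(\|V\|f_1)\wedge f_2\ ,
\end{equation*}
which is decomposable, as required.

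Alternatively, a basis-free algebraic route would use the criterion that $\omega$ is decomposable if and only if $\omega\wedge\omega=0$. This holds automatically here, since $\Lambda^4E=0$ when $\dim E=3$. However, that criterion itself requires proof, so the Hodge star argument is more self-contained.

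The only step needing any care is the basis-independence of $\star$, that is, the determinant characterisation above; everything else is routine linear algebra.
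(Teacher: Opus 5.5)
Your proof is correct and follows essentially the same route as the paper: both identify $\Lambda^2E$ with $E$ via the Hodge star, and then write each nonzero vector as $\|V\|f_3=(\|V\|f_1)\times f_2$. This is exactly the paper's step of choosing $X,Y$ orthogonal to $Z$ and bounding a parallelogram of area $\|Z\|$. Your separate treatment of $\omega=0$ (where the paper's non-colinear choice does not literally apply), of the orientation, and of the basis-independence of $\star$ fills in small details that the paper leaves implicit.
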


\begin{proof}
We furnish $E$ with an inner product and orientation. In $3$-dimensions, the Hodge star operator defines an isomorphism $*:E\rightarrow\Lambda^2 E$ such that, for all $X,Y\in E$,
\begin{equation*}
X\wedge Y = *(X\times Y)\ ,
\end{equation*}
where $\times$ here denotes the classical vector product. However, every vector $Z\in E$ can be written in the form $Z=X\times Y$. Indeed, it suffices to choose $X$ and $Y$ non-colinear, orthogonal to $Z$, and bounding a parallelogram of area equal to $\|Z\|$. The result follows.
\end{proof}

We denote by $\Lambda^2\opT\closedthreeball$ the bivector bundle over $\closedthreeball$, and we define the \emph{curvature operator} to be the linear endomorphism $\mathcal{R}:\Lambda^2T\closedthreeball\to\Lambda^2T\closedthreeball$ given by
\begin{equation*}
\langle\mathcal{R}(X\wedge Y),Z\wedge W\rangle=\overlineR(X,Y,W,Z)\ .
\end{equation*}
where $\langle\cdot,\cdot\rangle$ denotes the metric in $\Lambda^2\opT\closedthreeball$ induced by $\bar g$. By the standard symmetries of the Riemann curvature tensor, $\mathcal{R}$ is self-adjoint. Furthermore, for any orthonormal pair $(X,Y)$ of tangent vectors,
\begin{equation*}
\langle\mathcal{R}(X\wedge Y),X\wedge Y\rangle=\overline{\sect}(X,Y)\ .
\end{equation*}
By Lemma \ref{lemma:decomposability}, in the $3$-dimensional case, every element of $\Lambda^2\opT\closedthreeball$ is decomposable, it follows that the eigenvalues of $\mathcal{R}$ are sectional curvatures. The pinching condition \eqref{eq.pinching_sect} therefore yields
\begin{equation}
\label{eq.spectr_curv}
\Spec(\mathcal{R})\subset [-(1+\alpha),-1]\ .
\end{equation}

\subsubsection{Spectral centering} Trivially, if $A$ is a self-adjoint operator with $\Spec(A)\subset[a,b]$, then
\begin{equation*}
\left\|A-\frac{(a+b)}{2}\Id\right\|\leq\frac{b-a}{2}\ ,
\end{equation*}
where the norm here is the operator norm. Upon setting $a=-(1+\alpha)$ and $b=-1$, \eqref{eq.spectr_curv} therefore yields
\begin{equation}\label{eq.upper_bound_curv}
\left\|\mathcal{R}+\left(1+\frac{\alpha}{2}\right)\Id\right\|\leq\frac{\alpha}{2}\ .
\end{equation}

\begin{proof}[Proof of Proposition \ref{prop_bound_ricci}]
Let $(e_1,e_2,\nu)$ be an adapted frame of $\closedthreeball$ near $p$ and define the orthonormal frame of $\Lambda^2T\closedthreeball$ by
\begin{equation*}
\omega_1:=e_2\wedge\nu\ ,\quad\omega_2:=\nu\wedge e_1\ ,\qquad\text{and}\qquad\omega_3=e_1\wedge e_2\ .
\end{equation*}
Note that
\begin{equation*}
\langle\mathcal{R}\omega_3,\omega_1\rangle = \overlineR_{12\nu2} = -\overlineRic_{1\nu}\ ,
\end{equation*}
and
\begin{equation*}
\langle\mathcal{R}\omega_3,\omega_2\rangle = \overlineR_{121\nu} = -\overlineRic_{2\nu}\ .
\end{equation*}

Denote
\begin{equation*}
\mathcal{R}' := \mathcal{R}+\left(1+\frac{\alpha}{2}\right)\Id\ ,
\end{equation*}
Since $\mathcal{R}'$ and $\mathcal{R}$ have the same off-diagonal coefficients, and since by definition $\langle\mathcal{R}\omega_3,\omega_3\rangle=\bar\sigma$, we obtain
\begin{eqnarray*}
\mathcal{R}'\omega_3 &=& -\overlineRic_{1\nu}\omega_1-\overlineRic_{2\nu}\omega_2 + \langle\mathcal{R}'\omega_3,\omega_3\rangle\omega_3\\
                     &=& -\overlineRic_{1\nu}\omega_1-\overlineRic_{2\nu}\omega_2+\left(\bar\sigma+1+\frac{\alpha}{2}\right)\omega_3\ .
\end{eqnarray*}
Thus, by \eqref{eq.upper_bound_curv},
\begin{equation*}
\sum_i|\overlineRic_{\nu i}|^2+\left(\bar\sigma+1+\frac{\alpha}{2}\right)^2\leq\|\mathcal{R}'\omega_3\|^2\leq\frac{\alpha^2}{4}\ ,
\end{equation*}
which yields
$$\sum_i|\overlineRic_{\nu i}|^2\leq\frac{\alpha^2}{4}-\left(\bar\sigma+1+\frac{\alpha}{2}\right)^2=(-\bar\sigma-1)(\bar \sigma+1+\alpha)\ ,$$
as desired.
\end{proof}

\begin{remark}
This estimate is specific to the $3$-dimensional case. In higher dimensions, eigenvectors of the curvature operator need not be decomposable, the spectrum of $\mathcal R$ is thus no longer a subset of the set of sectional curvatures, and the pinching assumption therefore no longer implies the spectral estimate \eqref{eq.upper_bound_curv}.
\end{remark}

\subsection{Optimizing the pinching estimate}
We now turn to the problem of finding explicit bounds on $\alpha>0$ that imply that $\sigma_h\leq 1$ whenever
\begin{equation}\label{eq.pinching_hypotheses}-(1+\alpha)\leq\overline{\sect}\leq -1\ ,\qquad\text{and}\qquad\left\|\overline{\nabla}\overlineRic\right\|\leq\alpha\ .
\end{equation}

Combining Corollary \ref{coro_upper_bound_sigmah} and Proposition \ref{prop_bound_ricci} yields
\begin{equation}\label{eq.borne_sigma_h_1}
\sigma_h\leq \frac{\sigma}{\phi}-\frac{\|A\|^2}{2\phi^2}(2\sigma+2\bar\sigma-\overlineRic_{\nu\nu})-\frac{1}{\phi^2}\sum_i\lambda_i(\overline\nabla\overlineRic)_{i\nu i}-\frac{\|A\|^2}{2\phi^2}(\bar\sigma+1)(\bar\sigma+1+\alpha)\ .
\end{equation}
where, we recall,
\begin{equation*}
\phi=1+\frac12\|A\|^2\ .
\end{equation*}
We now introduce the functions
\begin{equation*}
s(p):=|\bar\sigma(p)|=-\bar\sigma(p)\quad\text{and}\quad\delta(p):=|\det(A(p))|=-\det(A(p))\ .
\end{equation*}
Note that, by minimality,
\begin{equation*}
\|A\|^2=2\delta\qquad\text{and}\qquad\phi=1+\delta\ .
\end{equation*}
We now estimate the terms appearing in \eqref{eq.borne_sigma_h_1} in terms of the functions $\phi$, $\delta$, and $s$.

\begin{enumerate}
\item By Gauss' equation \eqref{GaussEqn2},
\begin{equation*}
\sigma=-(s+\delta).
\end{equation*}
\item By \eqref{lem:star},
\begin{equation*}
2\bar\sigma-\overlineRic_{\nu \nu}=(\bar\sigma-\overline{\sect}_{1\nu})+(\bar\sigma-\overline{\sect}_{2\nu}).
\end{equation*}
Thus, since $\overline{\sect}_{2\nu},\overline{\sect}_{1\nu}\leq -1$,
\begin{equation*}
2\bar\sigma-\overlineRic_{\nu \nu}\geq 2(1-s)\ ,
\end{equation*}
and hence
\begin{equation*}
-\left(2\bar\sigma-\overlineRic_{\nu \nu}\right)\leq 2(s-1)\ .
\end{equation*}
\item By minimality, for each $i$,
\begin{equation*}
\left|\lambda_i\right|=\sqrt{\delta}\ .
\end{equation*}
Moreover by symmetry of the Ricci tensor $\overlineRic$, the covariant derivative $\overline\nabla\overlineRic$ is symmetric in the last two entries so
$$(\overline\nabla\overlineRic)_{i\nu i}=(\overline\nabla\overlineRic)_{ii\nu} \m{.}$$
This implies, by definition of the Hilbert--Schmidt norm,
$$2\sum_i((\overline\nabla\overlineRic)_{i\nu i})^2=\sum_i((\overline\nabla\overlineRic)_{i\nu i})^2+\sum_i((\overline\nabla\overlineRic)_{ii\nu})^2\leq\|\overline\nabla\overlineRic\|^2\leq\alpha^2\ ,$$

By minimality we have $\lambda_1^2+\lambda_2^2=2\delta$, so by Cauchy--Schwarz,
\begin{equation*}
\left|\sum_i\lambda_i(\overline\nabla\overlineRic)_{i\nu i}\right|\leq\frac{\sqrt{2\delta}\alpha}{\sqrt{2}}=\sqrt{\delta}{\alpha}\ .
\end{equation*}
\end{enumerate}

This yields
\begin{equation}
\label{eq.borne_sdelta}\sigma_h\leq -\frac{s+\delta}{\phi}+\frac{2\delta(s+\delta)}{\phi^2}+\frac{2\delta(s-1)}{\phi^2}+\frac{\alpha\sqrt{\delta}}{\phi^2}+\frac{\delta}{\phi^2}(s-1)(1+\alpha-s)\ .
\end{equation}
\noindent We now obtain an upper bound on $\sigma_h$ independent of $s$.

\begin{lemma}[Upper bound on the curvature]\label{eq.upper_bound_1-Ndelta}
Set
\begin{equation*}
u:=s-1\in[0,\alpha]\quad\text{and}\quad t:=\sqrt{\delta}\in[0,\infty)\ .
\end{equation*}
Assuming the pinching conditions \eqref{eq.pinching_hypotheses}, we have
\begin{equation*}
\sigma_h\leq 1+\frac{F_\alpha(u,t)}{\phi^2}\ ,
\end{equation*}
where
\begin{equation}\label{eq:Falpha}
F_\alpha(u,t)=\left(-2+(3+\alpha)u-u^2\right)t^2+\alpha t-(2+u).
\end{equation}
In particular, this yields the following criterion.
\begin{equation}\label{eq:criterion}
F_ \alpha\leq 0 \text{ on } [0,\alpha]\times[0,\infty)
 \Longrightarrow
 \sigma_h\leq 1 \text{ on } D\ .
\end{equation}
\end{lemma}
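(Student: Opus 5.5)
The plan is to prove this by direct algebraic substitution into the bound \eqref{eq.borne_sdelta}; no further geometric input is needed. First, I will check the ranges of the new variables. The quantity $\bar\sigma=\overline{\sect}(e_1,e_2)$ is a sectional curvature, so the pinching hypothesis \eqref{eq.pinching_hypotheses} gives $s=-\bar\sigma\in[1,1+\alpha]$, that is, $u=s-1\in[0,\alpha]$. Also $t=\sqrt\delta\ge 0$ is well defined, since $\delta=-\det A=\lambda_1^2\ge 0$ by minimality. In these variables $\phi=1+t^2$, and hence $\phi^2=1+2\delta+\delta^2$.

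Next, I will multiply \eqref{eq.borne_sdelta} by $\phi^2>0$ and expand the right-hand side as a polynomial in $s$ and $\delta$ (plus the term $\alpha t$). The individual contributions are:
\begin{align*}
-(s+\delta)(1+\delta) &= -s-s\delta-\delta-\delta^2\ ,\\
2\delta(s+\delta)+2\delta(s-1) &= 4s\delta+2\delta^2-2\delta\ ,\\
\delta(s-1)(1+\alpha-s) &= \delta u(\alpha-u)\ .
\end{align*}
Summing these and adding $\alpha t$ gives
\begin{equation*}
\phi^2\sigma_h\le -s+3(s-1)\delta+\delta^2+\alpha t+\delta u(\alpha-u)\ .
\end{equation*}
Substituting $s=1+u$, this becomes
\begin{equation*}
\phi^2\sigma_h\le -1-u+(3u+\alpha u-u^2)\delta+\delta^2+\alpha t\ .
\end{equation*}

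I will then subtract $\phi^2=1+2\delta+\delta^2$. The $\delta^2$ terms cancel, and the remainder is
\begin{equation*}
\bigl(-2+(3+\alpha)u-u^2\bigr)t^2+\alpha t-(2+u)=F_\alpha(u,t)\ .
\end{equation*}
Dividing by $\phi^2$ therefore yields $\sigma_h\le 1+F_\alpha(u,t)/\phi^2$, which is the main inequality.

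For the criterion \eqref{eq:criterion}, at every point $p\in D$ the pair $(u(p),t(p))$ lies in $[0,\alpha]\times[0,\infty)$. So if $F_\alpha\le0$ on that set, then $F_\alpha(u(p),t(p))/\phi^2\le 0$, and hence $\sigma_h\le 1$ everywhere on $D$.

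I do not expect a genuine obstacle. The only delicate point is the bookkeeping in the expansion, in particular checking that the $\delta^2$ terms cancel exactly, so that $F_\alpha$ is quadratic in $t$ and not quartic. I will also double-check the sign conventions inherited from \eqref{eq.borne_sdelta}, and that the $\overline\nabla\overlineRic$ term enters as $+\alpha t$, using $\sqrt\delta=t$.
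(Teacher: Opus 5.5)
Your proposal is correct and follows essentially the same route as the paper: multiply \eqref{eq.borne_sdelta} by $\phi^2=(1+t^2)^2$, substitute $s=1+u$ and $\delta=t^2$, and expand, with the quartic terms cancelling. The only differences are cosmetic: you expand in $(s,\delta)$ before substituting, and you state explicitly the range checks $u\in[0,\alpha]$ and $t\geq 0$, which the paper leaves implicit.
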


\begin{proof}

Making the changes of variables $u=s-1$, $\delta=t^2$ and $\phi=1+t^2$, we  obtain from \eqref{eq.borne_sdelta} the following

\begin{align*}
\phi^2(\sigma_h-1) &\leq -\phi(s+\delta)+2\delta(s+\delta)+2\delta(s-1)+\alpha\sqrt{\delta}+\delta(s-1)(1+\alpha-s)-\phi^2 \\
&= -(1+t^2)(1+u+t^2)+2t^2(1+u+t^2)\\
&\qquad +2u\,t^2+\alpha t+t^2\,u(\alpha-u)-(1+2t^2+t^4)\\
&= -(1+u)-(2+u)\,t^2-t^4+2(1+u)\,t^2+2t^4\\
&\qquad +2u\,t^2+\alpha t+(\alpha u-u^2)t^2-1-2t^2-t^4\\
&= \left(-2+(3+\alpha)u-u^2\right)t^2+\alpha t-(2+u)\\
&= F_\alpha(u,t)\ ,
\end{align*}
which is the desired inequality.
\end{proof}

We now prove the main result of this section.
\begin{proposition}\label{prop.alphastar}
Let
\begin{equation*}
\alpha_\star:=\frac{4\sqrt{17}-8}{13}=0.653263269...
\end{equation*}
If $0<\alpha\leq\alpha_\star$, then $\sigma_h \leq 1$ over $D$.
\end{proposition}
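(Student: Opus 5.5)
The plan is to verify the criterion \eqref{eq:criterion} of Lemma~\ref{eq.upper_bound_1-Ndelta}, that is, to show that $F_\alpha(u,t)\leq 0$ for all $(u,t)\in[0,\alpha]\times[0,\infty)$ whenever $0<\alpha\leq\alpha_\star$. I will treat $F_\alpha$ as a quadratic polynomial in $t$ for each fixed $u$. Write
\begin{equation*}
F_\alpha(u,t)=a(u)\,t^2+\alpha t-(2+u)\ ,\qquad a(u):=-2+(3+\alpha)u-u^2\ .
\end{equation*}

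\textbf{Step 1: the leading coefficient is negative.} Since $F_\alpha\to+\infty$ as $t\to\infty$ if $a(u)>0$, we first need $a(u)<0$ on $[0,\alpha]$. The function $a$ is concave with vertex at $u=(3+\alpha)/2>\alpha$, so it is increasing on $[0,\alpha]$. Its maximum there is therefore
\begin{equation*}
a(\alpha)=3\alpha-2\ ,
\end{equation*}
which is negative because $\alpha\leq\alpha_\star<2/3$.

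\textbf{Step 2: reduce to a one-variable inequality.} With $a(u)<0$, the maximum over $t\geq0$ is attained at $t=\alpha/(-2a(u))$, and equals
\begin{equation*}
-(2+u)+\frac{\alpha^2}{-4a(u)}\ .
\end{equation*}
Hence $F_\alpha\leq0$ on the whole region if and only if
\begin{equation*}
g(u):=4(2+u)\bigl(2-(3+\alpha)u+u^2\bigr)-\alpha^2\geq0\qquad\text{for all }u\in[0,\alpha]\ .
\end{equation*}

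\textbf{Step 3: monotonicity of $g$.} I will show that $g$ is decreasing on $[0,\alpha]$, so that the worst case is $u=\alpha$. A direct computation gives
\begin{equation*}
g'(u)=4\bigl(3u^2-(2+2\alpha)u-4-2\alpha\bigr)\ .
\end{equation*}
This is a convex function of $u$, so it suffices to check its sign at the endpoints. At $u=0$ it equals $4(-4-2\alpha)<0$, and at $u=\alpha$ it equals $4(\alpha^2-4\alpha-4)<0$ for $\alpha\in(0,1)$. Thus $g'<0$ on $[0,\alpha]$.

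\textbf{Step 4: the endpoint value and $\alpha_\star$.} It remains to check $g(\alpha)\geq0$. We have
\begin{equation*}
g(\alpha)=4(2+\alpha)(2-3\alpha)-\alpha^2=16-16\alpha-13\alpha^2\ .
\end{equation*}
The positive root of $16-16\alpha-13\alpha^2$ is $\alpha=\frac{-16+\sqrt{1088}}{26}=\frac{4\sqrt{17}-8}{13}=\alpha_\star$. Hence $g(\alpha)\geq0$ exactly when $0<\alpha\leq\alpha_\star$, and the criterion \eqref{eq:criterion} then gives $\sigma_h\leq1$ on $D$.

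The only genuinely delicate point is Step 3, the monotonicity in $u$, which is what shows that the binding constraint sits at the corner $u=\alpha$. It is handled by the convexity of $g'$ together with the two endpoint evaluations above.
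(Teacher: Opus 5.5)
Your proposal is correct and follows the paper's proof step by step. Both arguments proceed in the same order: they show $a_\alpha(u)\leq 3\alpha-2<0$ on $[0,\alpha]$, maximize the concave quadratic in $t$, reduce to $-4a_\alpha(u)(2+u)-\alpha^2\geq 0$, prove this function is decreasing via its convex derivative with negative endpoint values, and finally evaluate it at $u=\alpha$ to get $16-16\alpha-13\alpha^2\geq 0$.
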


\begin{proof}
Set
\begin{equation*}
a_\alpha(u):=-2+(3+\alpha)u-u^2\ ,
\end{equation*}
Since $\alpha_\star<2/3$, for $0<\alpha\leq\alpha_\star$ and $u\in[0,\alpha]$,
\begin{equation*}
a_\alpha(u)\leq a_\alpha(\alpha)=-2+3\alpha<0\ .
\end{equation*}
Hence for fixed $u$, the map $F_\alpha(u,t)$ is a concave quadratic polynomial in $t$, with maximum value at
\begin{equation*}
t_\ast(u)=-\frac{\alpha}{2a_\alpha(u)}>0\ .
\end{equation*}
Its maximal value is
\begin{equation*}
F_\alpha(u,t_\ast(u))=-(2+u)-\frac{\alpha^2}{4a_\alpha(u)}\ .
\end{equation*}
This is nonpositive exactly when
\begin{equation}\label{eq.Halpha}
H_\alpha(u):=-4a_\alpha(u)(2+u)-\alpha^2\geq 0\ .
\end{equation}
Differentiating \eqref{eq.Halpha} yields
\begin{equation*}
H_\alpha'(u)=4(3u^2-2(1+\alpha)u-(4+2\alpha))\ .
\end{equation*}
This quadratic polynomial is convex and at the two endpoints of $[0,\alpha]$ it equals
\begin{equation*}
-4(4+2\alpha)<0\qquad\text{and}\qquad 4(\alpha^2-4\alpha-4)<0\ ,
\end{equation*}
respectively. Hence $H_\alpha'(u)<0$ throughout $[0,\alpha]$ and
\begin{equation*}
H_\alpha(u)\geq H_\alpha(\alpha)=16-16\alpha-13\alpha^2\ .
\end{equation*}
The last expression is nonnegative precisely when
\begin{equation*}
0<\alpha\leq\frac{4\sqrt{17}-8}{13}\ .
\end{equation*}
This proves that $F_\alpha\leq 0$ over $[0,\alpha]\times[0,\infty)$ and, by Lemma \ref{eq.upper_bound_1-Ndelta}, that $\sigma_h\leq 1$ over $D$.
\end{proof}

\subsection{Total curvature and area of the conformal metric}
Recall that for an arclength-parametrized smooth curve $c$ in $\closedthreeball$, with unit
tangent $T$, its \emph{curvature vector} and \emph{total curvature} are given respectively by
\begin{equation}\label{eq.total_curvature}
 \boldsymbol\kappa_c:=\overline\nabla_TT\qquad\text{and}\qquad
 K_c:=\int_c|\boldsymbol\kappa_c|\,\dd c\ ,
\end{equation}
where $\dd c$ is the arc-length measure.

\begin{proposition}[Area bound]\label{prop:area}
Denoting $c=\partial D$, if $\overline \sigma\leq-1$, then
\begin{equation*}
\opArea_h(D)\leq K_c-2\pi\ .
\end{equation*}
In particular, if $K_c\leq4\pi$, then $\opArea_h(D)\leq2\pi$.
\end{proposition}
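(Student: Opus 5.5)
We prove the bound with the Gauss--Bonnet theorem for the metric $h=\phi g$ on the disk $D$, following Nitsche and Barbosa--do Carmo. Since $\phi=1+\tfrac12\|A\|^2=1+\delta$ and $\sigma=\overline\sigma+\kappa_{ext}=\overline\sigma-\delta$, we have
\begin{equation*}
\phi\,\dd\opArea_g=(1+\delta)\,\dd\opArea_g\leq(-\overline\sigma+\delta)\,\dd\opArea_g=-\sigma\,\dd\opArea_g\ ,
\end{equation*}
where the inequality uses $\overline\sigma\leq-1$. The area form of $h$ is $\phi\,\dd\opArea_g$, so
\begin{equation*}
\opArea_h(D)=\int_D\phi\,\dd\opArea_g\leq-\int_D\sigma\,\dd\opArea_g\ .
\end{equation*}
Gauss--Bonnet for the induced metric $g$ on the disk gives
\begin{equation*}
\int_D\sigma\,\dd\opArea_g+\int_{\partial D}k_g\,\dd\ell=2\pi\ ,
\end{equation*}
where $k_g$ is the geodesic curvature of $\partial D$ in $(D,g)$. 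Hence
\begin{equation*}
\opArea_h(D)\leq\int_{\partial D}k_g\,\dd\ell-2\pi\ .
\end{equation*}

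It remains to show $\int_{\partial D}k_g\,\dd\ell\leq K_c$. Along $c=\partial D$, let $T$ be the unit tangent and $n$ the outward conormal. By Gauss' formula \eqref{eq.GW_formula},
\begin{equation*}
\boldsymbol\kappa_c=\overline\nabla_TT=\nabla_TT-A(T,T)\nu\ .
\end{equation*}
The tangential part $\nabla_TT$ equals $-k_g\,n$, with the sign convention under which Gauss--Bonnet takes the form above. Therefore $|k_g|\leq|\boldsymbol\kappa_c|$ pointwise. Integrating,
\begin{equation*}
\int_{\partial D}k_g\,\dd\ell\leq\int_c|\boldsymbol\kappa_c|\,\dd c=K_c\ ,
\end{equation*}
which gives $\opArea_h(D)\leq K_c-2\pi$. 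If $K_c\leq4\pi$, this yields $\opArea_h(D)\leq2\pi$.

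The only delicate points are regularity and orientation. First, $D$ must be an immersed disk smooth up to the boundary, so that Gauss--Bonnet applies with the induced metric; if $D$ is merely immersed, we apply Gauss--Bonnet to the pullback metric on the abstract disk. Second, the signs of $n$ and $k_g$ must match the Gauss--Bonnet convention. This does not affect the inequality, since only $|k_g|$ enters the final bound. No branch points arise because $D$ is assumed immersed.
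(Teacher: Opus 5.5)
Your proof is correct and follows the paper's argument: the pointwise bound $\phi=1+\delta\leq-\overline\sigma+\delta=-\sigma$, then Gauss--Bonnet on the disk, then $|k_g|\leq|\boldsymbol\kappa_c|$ from Gauss' formula. Your convention $\nabla_TT=-k_g\,n$ with $n$ outward is the one under which Gauss--Bonnet holds as you wrote it; the paper defines $k_D$ via the outward conormal, which gives the opposite sign, but as you note only $|k_g|$ enters the bound.
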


\begin{proof}
By Gauss' formula \eqref{GaussEqn2},
\begin{equation*}
-\sigma = |\overline \sigma|+\delta \geq 1+\delta=\phi\ .
\end{equation*}
Thus, since $h=\phi g$,
\begin{equation*}
\opArea_h(D)
=
 \int_D\phi\,\dd\opArea_g
 \leq
 -\int_D\sigma\,\dd\opArea_g\ .
\end{equation*}

Let $\eta_c$ denote the outward-pointing unit conormal vector field over $c$ in $D$, and let $k_D:= \overline g(\overline\nabla_TT,\eta_{\partial D})$
denote the signed geodesic curvature of $c=\partial D$ viewed as a curve in $D$. By the Gauss--Bonnet theorem,
\begin{equation*}
 -\int_D\sigma\,\dd\opArea_g
 =
 \int_{c}k_D\,\dd s-2\pi\ .
\end{equation*}
Since $|k_D| \leq |\overline\nabla_TT| = |\boldsymbol\kappa_c|$, it follows that
\begin{equation*}
 \opArea_h(D)
 \leq
 K_c-2\pi\ ,
\end{equation*}
as desired.
\end{proof}

\section{Proof of Theorem \ref{thm:Nitsche}}
\label{ss:ProofOfNitsche}

We will make use of the following comparison result.

\begin{proposition}
\label{proposition:ComparisonLemma}
Let $(D,g)$ be a Riemannian disk of Gaussian curvature bounded above by $k_0$, let $(D',g')$ be a geodesic disk in a surface of constant Gaussian curvature equal to $k_0$, let $\mathcal{A}$ and $\mathcal{A}'$ denote their respective areas, and let $\Delta$ and $\Delta'$ denote their respective Laplace operators. If
\begin{equation*}
\mathcal{A}\leq \mathcal{A}'\ ,
\end{equation*}
then
\begin{equation*}
\lambda_0(-\Delta)\geq\lambda_0(-\Delta')\ .
\end{equation*}
\end{proposition}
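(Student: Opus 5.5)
We prove the comparison by symmetrization of the first Dirichlet eigenfunction, using the isoperimetric inequality for surfaces of curvature bounded above. Since $D$ is a disk with $\sigma\leq k_0$, the Alexandrov–Bol isoperimetric inequality holds for every subdomain $\Omega\subseteq D$:
\begin{equation*}
L(\partial\Omega)^2\geq 4\pi\,\opArea(\Omega)-k_0\,\opArea(\Omega)^2\ .
\end{equation*}
Equality holds for geodesic disks in the constant curvature model surface. For $k_0>0$ this is only meaningful while $\opArea(\Omega)\leq 4\pi/k_0$. The hypothesis $\mathcal{A}\leq\mathcal{A}'$, with $D'$ a geodesic disk, which has area $<4\pi/k_0$, keeps us in this range. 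This inequality is the main geometric input. I will quote it in its Bol–Fiala form, valid for simply connected domains of non-smooth curvature bounded above. Subdomains of a disk that are not simply connected can be filled in, which only increases area and decreases boundary length, so they are handled too.

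Let $u>0$ be the first Dirichlet eigenfunction of $-\Delta$ on $D$, with eigenvalue $\lambda_0$. We build its symmetrization $u^*$ on $D'$. Let $D^*\subseteq D'$ be the concentric geodesic disk of area $\mathcal{A}$. Define $u^*$ radial on $D^*$, with superlevel sets $\{u^*>t\}$ the concentric geodesic disks of area $\mu(t):=\opArea\{u>t\}$, and extend it by $0$ to $D'\setminus D^*$. By construction $u^*$ is equimeasurable with $u$, so
\begin{equation*}
\int_{D'}(u^*)^2=\int_D u^2\ .
\end{equation*}
The Dirichlet energy satisfies $\int_{D'}|\nabla u^*|^2\leq\int_D|\nabla u|^2$. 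To see this, apply the coarea formula and Cauchy–Schwarz on each level set:
\begin{equation*}
\int_{\{u=t\}}|\nabla u|\geq \frac{L(\{u=t\})^2}{-\mu'(t)}\ .
\end{equation*}
For $u^*$ the corresponding inequality is an equality, because $|\nabla u^*|$ is constant on its level circles. Moreover $L(\{u=t\})\geq L(\{u^*=t\})$ by Alexandrov–Bol, since both level sets bound regions of the same area $\mu(t)$ and the model disks are the equality case. Sard's theorem, together with real-analyticity or smoothness of $u$, handles the critical values. This is the Pólya–Szegő/Faber–Krahn argument adapted to the curvature-bounded setting.

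Then $u^*$ is Lipschitz and vanishes on $\partial D'$. The Rayleigh quotient therefore gives
\begin{equation*}
\lambda_0(-\Delta')\leq\frac{\int|\nabla u^*|^2}{\int (u^*)^2}\leq\frac{\int|\nabla u|^2}{\int u^2}=\lambda_0(-\Delta)\ .
\end{equation*}
Equivalently, one can stop at $D^*$ and use domain monotonicity $\lambda_0(D^*)\geq\lambda_0(D')$, which holds since $\mathcal{A}\leq\mathcal{A}'$ and the model disks are nested.

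The main obstacle is the isoperimetric step. The proof needs a version of Bol's inequality applicable to arbitrary level-set domains $\{u>t\}$ inside $D$, which may be non-smooth or multiply connected. It also needs care when $k_0>0$, where the area restriction $\opArea\leq\mathcal{A}'<4\pi/k_0$ must be used to place each level set in the range where the model comparison is monotone. I would handle this through the filling argument above together with the standard Alexandrov–Bol theorem for simply connected domains of curvature $\leq k_0$.
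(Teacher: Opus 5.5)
Your argument is the standard one, and it is essentially what the paper relies on. The paper just cites Barbosa--do Carmo's Proposition 3.3 (the equal-area case), whose proof is this Bol--Fiala plus Schwarz-symmetrization argument. It then invokes domain monotonicity, which your extension of $u^*$ by zero from $D^*$ to $D'$ already builds in.

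\textbf{The filling step fails when $k_0>0$.} Filling a hole increases area. But for $k_0>0$ the model quantity $4\pi a-k_0a^2$ is increasing only for $a\leq 2\pi/k_0$, not on all of $[0,4\pi/k_0)$. So when $\mathcal{A}>2\pi/k_0$ (that is, $D'$ larger than a hemisphere), Bol's inequality for the filled set need not give it for the original set. Your claim that $\mathcal{A}'<4\pi/k_0$ places you in a monotone range is therefore incorrect. In the paper's application $k_0=1$ and $\mathcal{A}'=2\pi$, which is exactly the threshold, so filling is harmless there.

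\textbf{No filling is needed.} In the interior $\Delta u=-\lambda_0u<0$, so $u$ is superharmonic. For any Jordan curve $\gamma\subseteq\{u>t\}$, the minimum principle on the disk $B\subseteq D$ bounded by $\gamma$ gives $B\subseteq\{u>t\}$. Hence every component of $\{u>t\}$ is simply connected, and Bol applies to each component directly.

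\textbf{Disconnected superlevel sets.} You did not address these, and they need a short argument. Set $I(a):=\sqrt{4\pi a-k_0a^2}$. Then $(I(a)+I(b))^2-I(a+b)^2=2I(a)I(b)+2k_0ab$. This is nonnegative for every sign of $k_0$; when $k_0<0$ use $I(a)^2\geq|k_0|a^2$. So $I$ is subadditive, and the componentwise inequalities sum to $L(\{u=t\})\geq I(\mu(t))$.

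The P\'olya--Szeg\H{o} step needs only this inequality, not any monotonicity of $I$. With these two points your proof is complete for every $k_0$ and every model geodesic disk $D'$ of area less than $4\pi/k_0$.
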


\begin{remark}
This result follows from \cite[Proposition $3.3$]{BarbosaDoCarmo1980}, where it is stated with the hypothesis $\mathcal{A}=\mathcal{A}'$, together with the domain monotonicity of the smallest Dirichlet eigenvalue.
\end{remark}

\begin{lemma}\label{lem_bottom_hemisphere}
Let $(\Sigma_0,g_0)$ denote the upper unit hemisphere in $\mathbb{R}^3$. Its area $\mathcal{A}^0$ and its Laplace operator $\Delta^0$ satisfy
\begin{equation}
\label{eqn:AreaAndDirichletEigenvalueOfHemisphere}
\mathcal{A}^0 = 2\pi\qquad\text{and}\qquad\lambda_0(-\Delta^0) = 2\ .
\end{equation}
\end{lemma}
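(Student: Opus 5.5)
The area claim is immediate. In spherical coordinates $(\theta,\varphi)\in[0,\pi/2]\times[0,2\pi)$ the round metric is $g_0=\dd\theta^2+\sin^2\theta\,\dd\varphi^2$. Hence
\begin{equation*}
\mathcal{A}^0=\int_0^{2\pi}\!\!\int_0^{\pi/2}\sin\theta\,\dd\theta\,\dd\varphi=2\pi\ .
\end{equation*}

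For the eigenvalue, I will exhibit an explicit positive Dirichlet eigenfunction. The natural candidate is the restriction of the height coordinate, $u=x_3|_{\Sigma_0}=\cos\theta$. It is strictly positive on the open hemisphere and vanishes exactly on the equator $\partial\Sigma_0$.

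It is an eigenfunction because linear functions on $\mathbb{R}^3$ restrict to first spherical harmonics on $\sphere^2$, so $-\Delta^0u=2u$. One can check this directly from
\begin{equation*}
\Delta^0u=\frac{1}{\sin\theta}\partial_\theta(\sin\theta\,\partial_\theta\cos\theta)=-2\cos\theta\ .
\end{equation*}
Alternatively, the coordinate functions of the minimal (here, totally geodesic in $\mathbb{R}^3$ after radial projection) unit sphere satisfy $\Delta x_i=-2x_i$ by the standard identity $\Delta\vec x=2\vec H$. Thus $2$ is a Dirichlet eigenvalue of $-\Delta^0$.

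It remains to see that $2$ is the \emph{least} Dirichlet eigenvalue. The one real step is the standard fact that the first Dirichlet eigenfunction of $-\Delta$ on a connected domain is the unique eigenfunction, up to scaling, that does not change sign. Equivalently, eigenfunctions of distinct eigenvalues are $L^2$-orthogonal, and the first eigenfunction may be taken strictly positive in the interior by the strong maximum principle applied to $|v|$ for a minimizer $v$ of the Rayleigh quotient. Since $u>0$ in the interior, $u$ cannot be orthogonal to a positive first eigenfunction. Hence $u$ is itself a first eigenfunction, and $\lambda_0(-\Delta^0)=2$.

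Should one prefer to avoid this general fact, an alternative is to decompose in Fourier modes $e^{ik\varphi}$ and compare Rayleigh quotients. The positivity argument is cleaner, however, and I expect no genuine obstacle beyond citing it (see \cite[Section 8]{GilbargTrudinger2001}).
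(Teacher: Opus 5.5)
Your proof is correct and follows the paper's argument: the paper also takes $u_0=x_3$, checks that it is a Dirichlet eigenfunction with eigenvalue $2$, and uses the fact that it does not change sign to conclude it is the first eigenfunction. You add the orthogonality-to-a-positive-first-eigenfunction justification, which the paper leaves implicit. One correction to your aside: the unit sphere is neither minimal nor totally geodesic in $\mathbb{R}^3$. The identity $\Delta\vec x=2\vec H$ gives $\Delta x_i=-2x_i$ precisely because $\vec H=-\vec x\neq 0$; in any case, your direct coordinate computation already suffices.
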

\begin{proof}[Proof.] It is well-known that $\mathcal{A}^0=2\pi$. Consider now the function
\begin{equation*}
u_0(x_1,x_2,x_3) := x_3\ .
\end{equation*}
We readily verify that $u_0$ is a Dirichlet eigenfunction of $\Sigma_0$ with eigenvalue $2$. Since $u_0$ does not change sign over the interior of $\Sigma_0$, it is the least Dirichlet eigenfunction, and the result follows.
\end{proof}

\begin{proof}[Proof of Theorem \ref{thm:Nitsche}] Let $D$ be an immersed minimal disk in $(\closedthreeball,\overline{g})$ bounded by $c$. Fix $0<\alpha<\alpha_\star$ and suppose that
\begin{equation}\label{eq:true_hypotheses}
-(1+\alpha)\leq\overline{\sect}\leq -1,\quad\left\|\overline{\nabla}\overlineRic\right\|\leq\alpha\ ,\quad\text{and}\quad K_c\leq 4\pi\ .
\end{equation}
Recall that $g$ denotes the restriction of $\bar g$ to $D$ and that $\Delta^g$ and $J$ denote respectively the Laplace and Jacobi operator of $D$. In addition, $h=\phi g$, where
\begin{equation*}
\phi=1+\frac{1}{2}\|A\|^2\ ,
\end{equation*}
so that $\Delta^g=\phi\Delta^h$ and $J$ may be written in the form
\begin{equation*}
Ju=(2-\overlineRic_{\nu\nu})\,u-\phi(\Delta^h+2) u\ .
\end{equation*}

Since $\overline{\sect}\leq-1$,
\begin{equation*}
\overlineRic_{\nu \nu}\leq -2\ ,
\end{equation*}
so that $Ju>J'u$ in the sense of quadratic forms, where
\begin{equation*}
J'u=-\phi(\Delta^h+2) u\ ,
\end{equation*}
In particular,
\begin{equation*}
\lambda_0(J)>\lambda_0(J')\ .
\end{equation*}

Let $(\Sigma_0,g_0)$ denote the unit hemisphere in $\Bbb{R}^3$, let $\sigma_0=1$ denote its Gaussian curvature, let $\mathcal{A}^0=2\pi$ denote its area, and let $\Delta^0$ denote its Laplace operator. By Lemma \ref{lem_bottom_hemisphere},
\begin{equation*}
\lambda_0(-\Delta^0) = 2\ .
\end{equation*}
Moreover, under the hypotheses \eqref{eq:true_hypotheses}, we have shown that
\begin{enumerate}
\item $\sigma_h\leq 1=\sigma_0$ (Proposition \ref{prop.alphastar}); and
\item $\opArea_h(D)\leq \mathcal{A}^0=2\pi$ (Proposition \ref{prop:area}).
\end{enumerate}
Proposition \ref{proposition:ComparisonLemma} therefore yields
\begin{equation*}
\lambda_0(-\Delta^h)\geq\lambda_0(-\Delta^0)=2\ .
\end{equation*}

Since $\lambda_0(-(\Delta^h+2))>0$, $\dd\opArea_h=\phi\,\dd\opArea_g$, and $\phi\geq 1$, we obtain, for every non zero smooth function $u$ vanishing over the boundary,
\begin{eqnarray*}
\frac{\int_Du\,J'u\,\dd\opArea_g}{\int_Du^2\dd\opArea_g}&=&\frac{\int_Du(-(\Delta^h+2)u)\,\dd\opArea_h}{\int_Du^2\dd\opArea_g}\\
   &\geq & \lambda_0(-(\Delta^h+2))\frac{\int_Du^2\phi\dd\opArea_g}{\int_Du^2\dd\opArea_g}\\
   &\geq & \lambda_0(-(\Delta^h+2))\geq0\ ,\vphantom{\frac{1}{2}}
\end{eqnarray*}
Taking the infimum over all such $u$ yields
\begin{equation*}
0\leq\lambda_0(J')<\lambda_0(J)\ ,
\end{equation*}
so that $D$ is strictly stable, as desired.
\end{proof}

\section{Proof of Theorem \ref{thm:MainResultA}}

\subsection{Minimal surfaces spanned by small circles}
We first address the local problem, that is, we prove existence, local uniqueness, and strict stability of minimal disks bounded by small circles in $(\mathbb{S}^2,\overline{g})$. We achieve this by perturbing the Euclidian case.
\begin{lemma}
Let $c\subseteq\mathbb{R}^2\subseteq\mathbb{R}^3$ be a smooth, simple, closed planar curve. The planar region $S$ bounded by $c$ is the unique minimal surface in $\mathbb{R}^3$ bounded by this curve. Furthermore, $S$ is strictly stable.
\end{lemma}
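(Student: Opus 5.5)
The plan has two parts: uniqueness, which I would derive from the convex hull property via a maximum principle, and strict stability, which follows from an explicit second variation computation.

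\emph{Uniqueness.} Let $\Sigma$ be any compact minimal surface in $\mathbb{R}^3$ (immersed, or more generally a stationary varifold) with boundary $c\subseteq\mathbb{R}^2=\{x_3=0\}$. Minimality means the coordinate functions are harmonic on $\Sigma$ for the induced metric, so $\Delta^g x_3=0$. Since $x_3$ vanishes on $\partial\Sigma=c$, the maximum principle gives $x_3\equiv0$ on $\Sigma$.

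For a stationary varifold I would argue directly with the first variation formula. Test against $X=\psi(x_3)\partial_3$ with $\psi(s)=s$. Then $\nabla^P\!\cdot X=|\partial_3^{\top}|^2\geq0$, and stationarity together with the vanishing of the boundary term (since $X=0$ on $c$, as in \eqref{eqn:FirstVariationOfDisk}) forces $\partial_3$ to be normal to $\Sigma$ almost everywhere. Hence the planes of $\Sigma$ are horizontal. The same argument with $\psi(s)=s^{+}$ confines the support to $\{x_3\leq 0\}$, and symmetrically to $\{x_3\geq 0\}$. So $\Sigma$ lies in the plane $\mathbb{R}^2$.

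Once $\Sigma\subseteq\mathbb{R}^2$ with boundary $c$, I would conclude as follows. An immersed surface contained in a plane is a local diffeomorphism onto planar open sets, so its degree function is locally constant away from $c$. This function is $0$ on the unbounded component of $\mathbb{R}^2\setminus c$ and changes by $1$ across $c$. Since $\partial\Sigma=c$ is simple and $\Sigma$ is a disk, the immersion is a covering of $S$ of degree one, hence $\Sigma=S$. This degree/covering step is the point where some care is needed. For a disk it reduces to a proper local homeomorphism onto the simply connected region $S$ extending to the boundary, hence a diffeomorphism.

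\emph{Strict stability.} The region $S$ is totally geodesic, so $A=0$, and the ambient Ricci curvature vanishes. By \eqref{eqn:EqnOfJacobiOperator} the Jacobi operator is $J=-\Delta$, the flat Dirichlet Laplacian on $S$. Take $u\in C^\infty_0(S)$ with $\int_S u\,Ju\,\dd\opArea=\int_S|\nabla u|^2\,\dd\opArea=0$. Then $u$ is constant, and since it vanishes on the boundary, $u=0$. Thus $\lambda_0(J)>0$, which by standard variational theory (compactness of the embedding $H^1_0\hookrightarrow L^2$ on a bounded domain) is attained and positive. Concretely, by Faber--Krahn or simply domain monotonicity, $\lambda_0(J)\geq\lambda_0(-\Delta_{B_R})>0$ for a disk $B_R\supseteq S$.

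I expect the main obstacle to be the uniqueness for possibly branched or immersed competitors. The harmonic-coordinate argument handles it cleanly for immersions. In the class of embedded disks actually needed later, the argument is immediate.
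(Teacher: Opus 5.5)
Your argument for immersed competitors is correct and is essentially the paper's. The paper invokes Osserman's convex hull property, whose proof is exactly the harmonicity of linear coordinate functions plus the maximum principle. Its stability argument is also the same as yours: $A=0$ and the ambient curvature vanishes, so $J=-\Delta$, whose Dirichlet spectrum on a bounded planar domain is positive. Your degree/counting step is a genuine addition, since the paper only writes ``contained in $\mathbb{R}^2$, from which uniqueness follows.'' As you phrase it, with the counting function equal to $0$ on the exterior component and changing by $1$ across $c$, it does not even need the disk hypothesis, which matches the lemma's wording (``unique minimal surface'').

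One correction concerns your optional varifold digression: its second step fails as stated. For $X=\psi(x_3)\partial_3$ you have $\nabla^P\!\cdot X=\psi'(x_3)|P\partial_3|^2$, which vanishes on horizontal planes. Once your first step has shown that the planes are horizontal almost everywhere, taking $\psi(s)=s^{+}$ just returns $0=0$ and says nothing about where the support lies. A horizontal piece at height $1$ is invisible to vertical test fields. To confine the support you need a field with positive tangential divergence on horizontal planes. One option is $X=\psi(x_3)(x_1\partial_1+x_2\partial_2)$ with $\psi\geq0$ vanishing on $(-\infty,\epsilon]$ and positive beyond, for which $\nabla^P\!\cdot X=2\psi(x_3)$ on horizontal planes. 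Better, use the standard varifold convex hull argument with $X=\gamma(|x-x_0|^2)(x-x_0)$, where $\gamma\geq0$ is nondecreasing and vanishes on $[0,R^2]$ with $c\subset B_R(x_0)$. Then $\nabla^P\!\cdot X=2\gamma+2\gamma'|P(x-x_0)|^2\geq2\gamma$, so the support lies in every ball containing $c$, hence in the convex hull of $c$. That is precisely the varifold form of the property the paper cites. Since the lemma concerns minimal surfaces, this does not affect your proof; you should either fix the digression this way or drop it.
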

\begin{proof} $S$ is trivially minimal. To prove uniqueness, let $S'$ be another minimal surface bounded by $c$. By Osserman's Convex Hull Property (see Section \cite[Corollary 1.10]{ColdingMinicozzi2011}), $S'$ is contained in the convex hull of $c$ and, in particular, is contained in $\mathbb{R}^2$, from which uniqueness follows. Finally, since $S$ has vanishing shape operator, its Jacobi operator is simply the Laplace operator $\Delta$. Since the Dirichlet spectrum of $\Delta$ in any bounded planar domain is positive, $S$ is strictly stable, and this completes the proof.
\end{proof}

We express the solution to the local problem as follows.
\begin{lemma}
\label{lemma:MinimalPerturbationExistence}
Let $(c_m)_{m\in\mathbb{N}}$ be a sequence of $C^{2,\alpha}$, simple, closed curves converging in the $C^{2,\alpha}$ sense to the smooth, strictly convex, simple, closed planar curve $c_\infty\subseteq\mathbb{B}^3$. Let $(g_m)_{m\in\mathbb{N}}$ be a sequence of metrics over $4\mathbb{B}^3$ converging in the $C^\infty$ sense to the Euclidian metric. For all sufficiently large $m$, $c_m$ bounds a unique minimal surface $S_m$ in $2\mathbb{B}^3$. Furthermore, for all such $m$, $S_m$ is embedded and strictly stable, and the sequence $(S_m)_{m\geq m_0}$ converges in the $C^{2,\alpha}$ sense to the unique minimal surface bounded by $c_\infty$.
\end{lemma}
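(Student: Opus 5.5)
We argue by an implicit function theorem for existence, and by compactness plus contradiction for uniqueness.

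\emph{Existence.} Parametrize candidate surfaces as normal graphs over the flat disk $S_\infty$ bounded by $c_\infty$. Since $c_\infty$ is strictly convex, and $c_m\to c_\infty$ in $C^{2,\alpha}$, we may fix diffeomorphisms $\psi_m$ of $4\mathbb{B}^3$, converging to the identity in $C^{2,\alpha}$, with $\psi_m(c_\infty)=c_m$. Pulling back reduces the problem to a fixed boundary curve $c_\infty$ with metrics $\psi_m^*g_m\to$ Euclidean in $C^{1,\alpha}$. We then consider the map
\[(g,u)\longmapsto H_g(\operatorname{graph}u),\qquad u\in C^{2,\alpha}_0(S_\infty),\]
into $C^{0,\alpha}(S_\infty)$. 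Its linearization in $u$ at $(\text{Eucl},0)$ is $-\Delta$ with Dirichlet conditions, which is an isomorphism. The implicit function theorem, in the form of Lemma \ref{lemma:ImplicitFunctionTheorem}, therefore yields, for large $m$, a unique small solution $u_m$ with $u_m\to0$ in $C^{2,\alpha}$. The surface $S_m$ is then a $C^{2,\alpha}$-small graph, hence embedded. Its Jacobi operator is a small perturbation of $-\Delta$, so by continuity of $\lambda_0$ we have $\lambda_0(J_m)>0$, and $S_m$ is strictly stable.

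\emph{Uniqueness in $2\mathbb{B}^3$.} Suppose that, along a subsequence, there is a second minimal surface $S'_m\subseteq2\mathbb{B}^3$ bounded by $c_m$ with $S'_m\neq S_m$.

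\emph{Area bound.} We first need an area bound for $S'_m$. Testing the first variation formula \eqref{eqn:FirstVariationOfDisk} with the position vector field $X=x$ (whose $g_m$-divergence along planes is close to $2$ on $4\mathbb{B}^3$) gives
\[(2-o(1))\,\opArea(S'_m)\leq \int_{c_m}|x|\,\dd\ell\leq C.\]

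\emph{Compactness and limit.} By Allard compactness, the associated varifolds converge to a stationary integral varifold $V$ in $\mathbb{R}^3$ with boundary $c_\infty$. By the convex hull property, $\opSupp V$ lies in the planar convex region bounded by $c_\infty$. The constancy theorem then forces $V=k[S_\infty]$. Comparing boundary multiplicity, via Allard's boundary regularity theorem at the multiplicity-one boundary $c_\infty$, gives $k=1$. Allard interior and boundary regularity then upgrade the convergence to $C^{2,\alpha}$ graphical convergence, so $S'_m$ is a normal graph of some $u'_m\to0$. Local uniqueness in the implicit function theorem then gives $u'_m=u_m$, a contradiction.

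\emph{Main obstacle.} The delicate step is this last compactness argument. One must exclude multiplicity and ensure boundary regularity, for which we rely on Allard's boundary theorem together with convexity of $c_\infty$. One must also handle immersed but possibly branched competitors. If varifold arguments are too heavy, we would instead try the alternative of a uniform curvature estimate for minimal disks in $2\mathbb{B}^3$ with small total boundary curvature.
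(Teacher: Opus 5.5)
Your existence and strict-stability step is essentially the paper's (Lemma~\ref{lemma:ImplicitFunctionTheorem} plus openness of the condition $\lambda_0>0$). Your uniqueness argument is correct but takes a genuinely different route.

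The paper never passes to a limit. It uses Lemma~\ref{lemma:ImplicitFunctionTheorem} again to foliate the solid cylinder $C_m$ over the projection of $c_m$ by $g_m$-minimal graphs $S_{m,t}$ spanning the vertical translates of $c_m$. A competitor inside $C_m$ then touches an extremal leaf from one side, and coincides with it by the strong maximum principle. Every competitor in $2\mathbb{B}^3$ is confined to $C_m$ by a barrier: the $g_m$-distance to an $\epsilon$-convex set $K_m\supseteq c_m$ built from large balls, which is convex by Lemma~\ref{lemma:ConvexityOfDistance} and hence subharmonic on minimal surfaces. That argument uses only maximum principles and comparison geometry, and needs no area bound. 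It is also exactly where strict convexity of $c_\infty$ enters.

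Your contradiction argument instead rests on geometric measure theory: an area bound via the position field, integral varifold compactness, the convex hull property, the constancy theorem, Allard interior and boundary regularity, and Schauder estimates. It is much heavier but completely soft, and it does not actually need convexity of $c_\infty$. Compact support together with the constancy theorem already kills the unbounded component of the plane minus $c_\infty$.

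One step should be justified differently. Allard's boundary theorem assumes a boundary density ratio close to $\tfrac12$, so it cannot be what gives $k=1$. Instead, since $|\delta V(X)|\le\lim\opLength(c_m)\,\sup|X|$, the measure $\|\delta V\|$ has mass at most $\opLength(c_\infty)$. On the other hand, $\|\delta(k[S_\infty])\|=k\,\mathcal{H}^1\llcorner c_\infty$, so $k\le 1$. For $k\ge 1$, project onto the plane of $c_\infty$ to get $\opArea(S'_m)\ge\opArea(S_\infty)-o(1)$.

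Likewise, the compactness and regularity theorems should be invoked in their Riemannian, varying-metric form. The Euclidean first variation of a $g_m$-minimal surface involves $\|g_m-g_0\|\,|A|$, so it is not a priori a bounded measure.
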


Before proving Lemma \ref{lemma:MinimalPerturbationExistence}, we address a couple of technical preliminaries. This first is a standard perturbation result of minimal surface theory. Let $\mathbb{B}^2$ denote the open unit ball in $\mathbb{R}^2$, and let $\Omega\subseteq\mathbb{B}^2$ be an open subset with smooth boundary. Let $C^{2,\alpha}(\Omega)$ and $C^{2,\alpha}(\partial\Omega)$ denote respectively the H\"older spaces of $C^{2,\alpha}$-functions over $\Omega$ and $\partial\Omega$. Note that, if $u$ is an element of either of these two spaces satisfying $\|u\|_{C^0}<1$, then its graph is contained in $2\mathbb{B}^3$. Let $\mathcal{G}^{1,\alpha}:=\mathcal{G}^{1,\alpha}(2\mathbb{B}^3)$ denote the space of $C^{1,\alpha}$ riemannian metrics over $2\mathbb{B}^3$ furnished with the $C^{1,\alpha}$-topology, and let $g_0\in\mathcal{G}^{1,\alpha}$ denote the Euclidian metric.
\begin{lemma}
\label{lemma:ImplicitFunctionTheorem}
There exist $\epsilon,\delta>0$, and a neighbourhood $\mathcal{G}_0$ of $g_0$ in $\mathcal{G}^{1,\alpha}$ such that, for all $g\in\mathcal{G}_0$, and for all $v\in C^{2,\alpha}(\partial\Omega)$ such that $\|v\|_{C^{2,\alpha}}<\epsilon$, there exists a unique $u:=u_{g,v}\in C^{2,\alpha}(\Omega)$ such that
\begin{itemize}
\item[(1)] $\|u\|_{C^{2,\alpha}}<\delta$\ ,
\item[(2)] $u|_{\partial\Omega} = v$\ , and
\item[(3)] the graph of $u$ is minimal with respect to the metric $g$.
\end{itemize}
\end{lemma}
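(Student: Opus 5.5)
We will prove this with the implicit function theorem applied to the minimal graph operator over $\Omega$. For $g\in\mathcal{G}^{1,\alpha}$ and $u\in C^{2,\alpha}(\Omega)$ with $\|u\|_{C^0}<1$, let $\mathcal{H}(g,u)\in C^{0,\alpha}(\Omega)$ be the mean curvature, with respect to $g$, of the graph of $u$. Write it in coordinates as a quasi-linear operator
\begin{equation*}
\mathcal{H}(g,u)=a^{ij}(x,u,\nabla u;g)\,\partial_{ij}u+b(x,u,\nabla u;g,\partial g).
\end{equation*}
The coefficients $a^{ij}$ depend smoothly on $g$ and on the $1$-jet of $u$. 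The lower-order term $b$ involves the Christoffel symbols, so it depends on $\partial g$ as well. This is why the $C^{1,\alpha}$ topology on metrics is exactly what is needed for $\mathcal{H}$ to land in $C^{0,\alpha}$.

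We then define
\begin{equation*}
\Phi:\mathcal{G}_0'\times U\times C^{2,\alpha}(\partial\Omega)\to C^{0,\alpha}(\Omega)\times C^{2,\alpha}(\partial\Omega),\qquad \Phi(g,u,v):=\bigl(\mathcal{H}(g,u),\,u|_{\partial\Omega}-v\bigr),
\end{equation*}
where $U$ is a small ball about $0$ in $C^{2,\alpha}(\Omega)$. The steps are as follows.

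\begin{enumerate}
\item Show that $\Phi$ is $C^1$. This holds because composition with smooth functions of $(x,u,\nabla u,g,\partial g)$ is $C^1$ on these H\"older spaces, since $\nabla u\in C^{1,\alpha}$ and $g\in C^{1,\alpha}$.
\item Check that $\Phi(g_0,0,0)=0$, since the flat disk is minimal for the Euclidean metric.
\item Compute the partial derivative in $u$ at $(g_0,0,0)$. Linearizing the Euclidean minimal surface equation at $u=0$ gives
\begin{equation*}
D_u\Phi(g_0,0,0)\dot u=(c\,\Delta\dot u,\ \dot u|_{\partial\Omega}),
\end{equation*}
where $c$ is a nonzero constant depending on the normalisation of $H$. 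This is the Jacobi operator of the flat disk, which matches \eqref{eqn:EqnOfJacobiOperator} with $A=0$ and $\overlineRic=0$.
\item Show that $D_u\Phi(g_0,0,0)$ is an isomorphism $C^{2,\alpha}(\Omega)\to C^{0,\alpha}(\Omega)\times C^{2,\alpha}(\partial\Omega)$. The Schauder theory for the Dirichlet problem for the Laplacian on a smooth bounded domain gives exactly this \cite[Theorem 6.14]{GilbargTrudinger2001}. Injectivity is the strict stability of the flat disk, or equivalently the maximum principle.
\item Apply the implicit function theorem. It yields neighbourhoods $\mathcal{G}_0$ of $g_0$, $\{\|v\|<\epsilon\}$ and $\{\|u\|<\delta\}$, together with a $C^1$ map $(g,v)\mapsto u_{g,v}$, such that within $\{\|u\|<\delta\}$ the solutions of $\Phi(g,u,v)=0$ are exactly $u=u_{g,v}$. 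This is precisely conditions (1)--(3) together with uniqueness.
\end{enumerate}

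We also shrink $\delta<1$ so that every graph considered lies in $2\mathbb{B}^3$.

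The main obstacle is the regularity bookkeeping in step 1. We must verify that the mean curvature of a graph in a $C^{1,\alpha}$ metric is a well-defined $C^1$ map into $C^{0,\alpha}$, jointly in $(g,u)$. We expect to handle this by writing $H$ explicitly through the induced metric $\gamma_{ij}=g(\partial_i F,\partial_j F)$ with $F(x)=(x,u(x))$, the $g$-unit normal, and $g$-Christoffel symbols evaluated along $F$. Each ingredient is then a composition of a smooth algebraic expression with $C^{1,\alpha}$ or $C^{0,\alpha}$ functions. We use that composing $g\in C^{1,\alpha}$ with $F\in C^{2,\alpha}$ is $C^1$ in $(g,F)$ as a map into $C^{1,\alpha}$. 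Strictly speaking this needs $g$ slightly more regular for differentiability in $F$. If necessary, we will restrict to smooth $g$ and use only continuity in $g$: we apply the implicit function theorem in $u$ with $g$ as a parameter, for which continuity of $\Phi$ and $D_u\Phi$ in $g$ suffices.
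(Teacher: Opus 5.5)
Your strategy is the paper's own. You apply the implicit function theorem to $(g,u)\mapsto(\mathcal{H}(g,u),u|_{\partial\Omega})$, whose $u$-derivative at $(g_0,0)$ is $\dot u\mapsto(-\tfrac12\Delta\dot u,\dot u|_{\partial\Omega})$. By Schauder theory this is an isomorphism $C^{2,\alpha}(\Omega)\to C^{0,\alpha}(\Omega)\times C^{2,\alpha}(\partial\Omega)$, and your Steps 2--5 are fine. The paper simply asserts that $\Phi$ is smooth on $\mathcal{G}^{1,\alpha}\times\mathcal{U}^{2,\alpha}$. The regularity worry you raise at the end is justified: it invalidates your Step 1 (and the paper's assertion), and your fallback does not repair it.

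The point is that $\mathcal{H}(g,u)$ contains the Christoffel symbols of $g$ evaluated at $(x,u(x))$, so $D_u\mathcal{H}(g,u)\dot u$ contains terms $(\partial^2 g)(x,u(x))\,\dot u$. For $g\in C^{1,\alpha}$ these terms are not even defined. For smooth $g$ they are not controlled by $\|g-g_0\|_{C^{1,\alpha}}$. Hence $C^{1,\alpha}$-closeness of $g$ to $g_0$ does not make $D_u\Phi(g,u)$ close to $D_u\Phi(g_0,0)$ in operator norm, and that is exactly what the parametric implicit function theorem (that is, the underlying contraction) needs. Restricting to smooth $g$ while keeping the $C^{1,\alpha}$ topology therefore does not help.

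In fact the uniqueness claim is false in that topology. Take $g_\epsilon=e^{2\epsilon\psi(z)}(dx_1^2+dx_2^2)+dz^2$ with $\psi(z)=-|z|^{1+\alpha}$; an even smoothing of $\psi$ at a scale much smaller than $\epsilon^{1/(1-\alpha)}$ gives smooth examples. Then $\|g_\epsilon-g_0\|_{C^{1,\alpha}}=O(\epsilon)$, and $u=0$ is a minimal graph with $v=0$ since $\psi'(0)=0$. However, the area of the graph of $t\varphi$, with $\varphi>0$ vanishing on $\partial\Omega$, is $|\Omega|-c\,\epsilon\,t^{1+\alpha}+O(t^2)$ with $c>0$. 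Moreover, the minimal graph equation with zero boundary values is, to leading order, the sublinear problem $-\Delta u=2(1+\alpha)\epsilon\,|u|^{\alpha-1}u$. This problem has a second, positive solution of $C^{2,\alpha}$-norm $O(\epsilon^{1/(1-\alpha)})$, obtained for instance from sub- and supersolutions of the form $s\varphi_1$ and $C\epsilon^{1/(1-\alpha)}\zeta$. So uniqueness in every $\delta$-ball fails for metrics arbitrarily $C^{1,\alpha}$-close to $g_0$.

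The remedy is to take $\mathcal{G}_0$ to be a neighbourhood of $g_0$ in a stronger topology. In $C^{2,\alpha}$, the $\partial^2 g$ contributions to $\mathcal{H}(g,u_1)-\mathcal{H}(g,u_2)$ are bounded by $C\|g-g_0\|_{C^{2,\alpha}}\|u_1-u_2\|_{C^{2,\alpha}}$, so a direct contraction argument goes through. In $C^3$, the textbook $C^1$ implicit function theorem applies verbatim. This costs nothing downstream, since Lemma \ref{lemma:MinimalPerturbationExistence} only uses metrics converging to $g_0$ in the $C^\infty$ sense.
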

\begin{remark} The functional $(g,v)\mapsto u_{g,v}$ is a smooth functional between Banach manifolds.
\end{remark}
\begin{remark} By elliptic regularity, if $g$ and $v$ are smooth, then so too is $u$.
\end{remark}
\begin{proof}[Sketch of proof.] Define the open subset $\mathcal{U}^{2,\alpha}\subseteq C^{2,\alpha}(\Omega)$ by
\begin{equation*}
\mathcal{U}^{2,\alpha} := \{ u\ |\ \|u\|_{C^0}<1\}\ .
\end{equation*}
Define the functional $H:\mathcal{G}^{1,\alpha}\times\mathcal{U}^{2,\alpha}\rightarrow C^{0,\alpha}(\Omega)$ such that, for all $(g,u)$, and for all $x\in\Omega$, $H(g,u)(x)$ is the mean curvature with respect to $g$ of the graph of $u$ at the point $(x,u(x))$. This functional is smooth as a functional between Banach manifolds. Since $g_0$ is Euclidian, by $(14.102)$ of \cite{GilbargTrudinger2001}, for all $u$,
\begin{equation*}
H(g_0,u) = -\frac{1}{2}\nabla\cdot\bigg(\frac{1}{\sqrt{1+\left|\nabla u\right|^2}}\nabla u\bigg)\ .
\end{equation*}
In particular, the partial derivative of $H$ with respect to the second component at $(g_0,0)$ is
\begin{equation*}
D_2H(g_0,0)\cdot u = -\frac{1}{2}\Delta u\ .
\end{equation*}

Let $R:C^{2,\alpha}(\Omega)\rightarrow C^{2,\alpha}(\partial\Omega)$ denote the restriction operator, and define the functional $\Phi:\mathcal{G}^{1,\alpha}\times\mathcal{U}^{2,\alpha}\rightarrow C^{0,\alpha}(\Omega)\times C^{2,\alpha}(\partial\Omega)$ by
\begin{equation*}
\Phi(g,u) := (H(g,u),R(u))\ .
\end{equation*}
For any $g\in\mathcal{G}^{1,\alpha}$ and $v\in C^{2,\alpha}(\partial\Omega)$, $\Phi(g,u)=(0,v)$ if and only if $u$ is equal to $v$ along the boundary and its graph is minimal with respect to $g$.

$\Phi$ is smooth as a functional between Banach manifolds, and its partial derivative at $(g_0,0)$ with respect to its second component is
\begin{equation*}
D\Phi(g_0,0)\cdot u = L(u) := (-\Delta u/2, R(u))\ .
\end{equation*}
Since $L:C^{2,\alpha}(\Omega)\rightarrow C^{0,\alpha}(\Omega)\times C^{2,\alpha}(\partial\Omega)$ is a linear isomorphism (see, for example, \cite{GilbargTrudinger2001}), the result now follows by the implicit function theorem for smooth functionals between Banach manifolds.
\end{proof}

The second technical preliminary concerns convex subsets of Riemannian manifolds. Let $(X,g)$ be a Riemannian $3$-manifold which is \emph{geodesically convex} in the sense that any two points are connected by a unique geodesic. We say that a real function over $X$ is \emph{convex} whenever its restriction to any constant speed parametrized geodesic is convex. We will say that a closed, convex subset $K\subseteq X$ is $\epsilon$-\emph{convex} whenever every boundary point has a local supporting tangent surface with principal curvatures greater than $\epsilon$. More precisely, $K$ is $\epsilon$-\emph{convex} whenever, for all $p\in\partial K$, and for every supporting normal $N$ of $K$ at $p$, there exists $r>0$, and a smooth, convex function $f:B_r(p)\rightarrow\mathbb{R}$ such that
\begin{itemize}
\item[(1)] $f(p)=0$,
\item[(2)] $\nabla f(p)=N$,
\item[(3)] $K\cap B_r(p)\subseteq f^{-1}(]-\infty,0])$, and
\item[(4)] every level set of $f$ has principal curvatures greater than $\epsilon$.
\end{itemize}
 Let $d_K:X\rightarrow\mathbb{R}$ denote the distance in $X$ to $K$.
\begin{lemma}
\label{lemma:ConvexityOfDistance}
If $K$ is $\epsilon$-convex and if $X$ has sectional curvature bounded above by $C^2>0$, then $d_K$ is convex over $d_K^{-1}(]-\infty,R[)$, where
\begin{equation*}
R = \frac{1}{C}\arctan(\epsilon/C)\ .
\end{equation*}
\end{lemma}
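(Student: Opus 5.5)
The plan is to use the classical comparison argument for distance functions to convex sets: a Riccati/Hessian comparison along geodesics normal to $\partial K$.

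\emph{Step 1 (reduction to supporting surfaces).} Fix $x$ with $0<d_K(x)<R$. Let $p\in\partial K$ be a nearest point and $N$ the supporting normal at $p$, that is, the initial velocity of the minimizing geodesic $\gamma$ from $p$ to $x$. Take the local convex function $f$ given by $\epsilon$-convexity at $(p,N)$, and set $\Sigma:=f^{-1}(0)$. This is a smooth surface through $p$, tangent to the supporting plane, with principal curvatures $>\epsilon$ with respect to the outward normal $N$. Since $K\cap B_r(p)\subseteq\{f\le0\}$, near $x$ we have $d_K\ge d_\Sigma^+$, where $d_\Sigma^+$ is the signed distance to $\Sigma$ on its outer side, and equality holds at $x$. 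Thus $d_\Sigma^+$ is a smooth lower support function for $d_K$ at $x$. Because $d_K$ is locally Lipschitz, it is enough to show that every point has such a lower support function $\psi$ that is convex along geodesics through $x$, with $\nabla^2\psi(x)\ge0$. This gives convexity of $d_K$ in the viscosity/support sense, and hence convexity along every geodesic (a continuous function on an interval with a smooth convex lower support at each point is convex). At points of $K$ itself, $d_K=0$ is its minimum, and convexity near $\partial K$ follows from the same support argument applied at nearby exterior points together with $d_K\ge0$.

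\emph{Step 2 (Hessian comparison).} Compute $\nabla^2 d_\Sigma^+$ at $x$. Its restriction to $\gamma'^\perp$ is the shape operator $S(t)$ of the parallel surface $\Sigma_t$ at distance $t=d_K(x)$, and $\nabla^2 d_\Sigma^+(\gamma',\cdot)=0$. The shape operator satisfies the Riccati equation
\begin{equation*}
S'+S^2+\overline{\opR}_{\gamma'}=0,\qquad S(0)>\epsilon\,\Id .
\end{equation*}
Since $\overline{\opR}_{\gamma'}\le C^2\Id$, the standard Riccati comparison (applied to the smallest eigenvalue, or via Jacobi field/Rauch comparison) gives $S(t)\ge s(t)\Id$. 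Here $s$ solves $s'=-s^2-C^2$ with $s(0)=\epsilon$, namely
\begin{equation*}
s(t)=C\tan\bigl(\arctan(\epsilon/C)-Ct\bigr).
\end{equation*}
This is positive exactly for $t<R$. Hence $\nabla^2 d_\Sigma^+(x)\ge0$, and the same holds along the geodesic through $x$ within the neighbourhood, because positivity of the Hessian is an open condition and all nearby points lie at distance $<R$.

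\emph{Main obstacle.} The delicate step is the comparison, because $S$ is only bounded below via its eigenvalues and $\overline{\opR}_{\gamma'}$ need not commute with $S$. I would handle this by tracking $\mu(t)=\min\operatorname{Spec} S(t)$. Taking $v$ a unit minimizing eigenvector, one gets in the barrier sense
\begin{equation*}
\mu'\ge-\mu^2-\overline{\sect}(\gamma',v)\ge-\mu^2-C^2 .
\end{equation*}
ODE comparison then gives $\mu\ge s$ as long as $s$ stays finite, which it does on $[0,R)$. A second technicality is to check that no focal points of $\Sigma$ occur before distance $R$, so that $d_\Sigma^+$ is smooth along $\gamma$. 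This follows from the same bound, since $\mu$ stays bounded below and focal points require an eigenvalue to tend to $-\infty$. Geodesic convexity of $X$ guarantees that nearest points and minimizing geodesics exist and that restricting to geodesic segments is meaningful.
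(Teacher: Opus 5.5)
Your proposal is correct and follows essentially the same route as the paper, which also takes, at each exterior point, the distance to the sublevel set $f^{-1}(]-\infty,0])$ as a local lower support touching $d_K$, proves it convex below distance $R$ by comparison with a geodesic sphere of radius $\rho=\pi/(2C)-R$ (whose principal curvatures are exactly $\epsilon$) in the model space of curvature $C^2$, and concludes that $d_K$ is locally an envelope of convex functions. Your Riccati comparison is the analytic form of that model comparison, since your barrier $s(t)=C\tan(\arctan(\epsilon/C)-Ct)$ is exactly $C\cot(C(\rho+t))$, the principal curvature of the parallel model spheres.
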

\begin{proof}[Sketch of proof.]We will describe $d_K$ everywhere locally as an envelope of convex functions (that is, locally the sup of a family of convex functions), from which convexity will follow. Choose $q\in X\setminus K$, let $p\in K$ denote the closest point in $K$ to $q$, and let $N$ denote the unit vector at $p$ pointing in towards $q$. Note that, since $p$ is the closest point in $K$ to $q$, $N$ is a supporting normal of $K$ at $p$. Let $f:B_r(p)\rightarrow\mathbb{R}$ be as above, satisfying, in particular $\nabla f(p)=N$, and denote $K':=f^{-1}(]-\infty,0])$, and denote $d:=d_K$ and $d':=d_{K'}$. Then $d(q)=d'(q)$ and, near $q$, $d\geq d'$.

We now use comparison theory (see, for example, \cite{CheegerEbin2008}). Let $Y$ denote the standard sphere in $\mathbb{R}^4$ of radius $1/C$. With $R$ as above, upon comparing with a geodesic sphere of radius $(\pi/2C)-R$ in $Y$, we see that $d'$ is convex near $q$ provided that $d'(q)<R$. The function $d$ is thus everywhere locally an envelope of convex functions, and is therefore itself convex, as desired.
\end{proof}

\begin{proof}[Proof of Lemma \ref{lemma:MinimalPerturbationExistence}.] We first prove existence. Let $S_\infty$ denote the planar region bounded by $c_\infty$. By Lemma \ref{lemma:ImplicitFunctionTheorem}, we may suppose that for all $m$, $c_m$ bounds a minimal surface $S_m$ and that the sequence $(S_m)_{m\in\mathbb{N}}$ converges in the $C^{2,\alpha}$ sense to $S_\infty$. Since $S_\infty$ is strictly stable, and since strict stability is preserved by small perturbations, we may also suppose that $S_m$ is strictly stable for all $m$. By elliptic regularity, we may also suppose that $(S_m)_{m\in\mathbb{N}}$ converges to $S_\infty$ in the $C^\infty$ sense.

In order to prove uniqueness, we use the following construction. Suppose that $c_\infty$ is contained in the $x-y$ plane, and let $\partial_z$ denote the unit vector in the $z$-direction. For all $m$, let $c'_m$ denote the projection of $c_m$ along the $z$-axis onto the $x-y$ plane, and define the cylinder $C_m$ by
\begin{equation*}
C_m := \{ (x,y,z)\ |\ (x,y)\in\overlineint(c_m')\ \text{and}\ z\in\mathbb{R}\}\ .
\end{equation*}
For all sufficiently large $m$, we foliate a portion of $C_m$ by minimal surfaces as follows. For all $m$, and for all $t\in[-3,3]$, let $c_{m,t}$ denote the curve obtained upon translating $c_m$ by a distance $t$ in the $z$-direction. By Lemma \ref{lemma:ImplicitFunctionTheorem} again, for all $m$, we may suppose that there exists a $C^{2,\alpha}$ family $(S_{m,t})_{t\in[-3,3]}$ of embedded surfaces such that, for all $t$, $S_{m,t}$ is bounded by $c_{m,t}$ and minimal with respect to $g_m$. We claim that the family $(S_{m,t})_{t\in[-3,3]}$ also foliates a subset $U_m$, say, of $C_m\cap4\mathbb{B}^3$. Indeed, for all sufficiently large  $m$ and for all $t$, $S_{m,t}$ is the graph of some function $u_{m,t}$, say. For all such $m$, consider now the function $\tilde{u}_m(x,t):=(x,u_{m,t}(x))$. The sequence $(\tilde{u}_m)_{m\in\mathbb{N}}$ converges smoothly towards the limit $\tilde{u}_\infty(x,t):=(x,t)$. Since $\tilde{u}_\infty$ is a diffeomorphism, so too is $\tilde{u}_m$ for sufficiently large $m$, and it follows that $(S_{m,t})_{t\in[-3,3]}$ foliates some subset of $C_m\cap4\mathbb{B}^3$, as asserted. This yields the desired foliation. Note, in addition, that, for all $m$, we may suppose that $U_m$ contains $C_m\cap2\mathbb{B}^3$.

We now show that, for sufficiently large $m$, $S_m$ is the only minimal surface in $C_m\cap2\mathbb{B}^3$ bounded by $c_m$. Indeed, suppose the contrary, and let $m$ be such that there exists a minimal surface $S_m'\neq S_m$ contained in $C_m\cap2\mathbb{B}^3$ and bounded by $c_m$. By the preceding discussion, we may suppose that the family $(S_{m,t})_{t\in[-3,3]}$ foliates a subset of $C_m$ containing $C_m\cap 2\mathbb{B}^3$. Let $t_-,t_+\in[-3,3]$ denote respectively the minimal and maximal values of $t$ such that $S_{m,t}$ meets $S_m'$. Since $S_m'\neq S_m$, either $t_-<0$ or $t_+>0$. If $t_-<0$, then $S_m'$ meets $S_{m,t_-}$ tangentially at some interior point of $S_m'$. Furthermore, since $S_{m,t_-}$ meets $\partial C_m$ transversally, the point of intersection is also an interior point of $S_{m,t_-}$.
It follows by the strong maximum principle (see Section $1.7$ of \cite{ColdingMinicozzi2011}) that $S_m'=S_{m,t_-}$ and, in particular, $c_m=c_{m,t_-}$, which is absurd. The case where $t_+>0$ is addressed in a similar manner, and it follows that $S_m$ is the only minimal surface in $C_m\cap 2\mathbb{B}^3$ bounded by $c_m$, as desired.

It remains only to show that, for sufficiently large $m$, any minimal surface in $2\mathbb{B}^3$ bounded by $c_m$ is also contained in $C_m$. To this end, fix some large $m$, and note that we may suppose that $(4\mathbb{B}^3,g_m)$ is geodesically convex. We construct an $\epsilon$-convex fattening of the convex hull of $c_m$ as follows. Let $\epsilon>0$ be such that the geodesic curvature of $c_\infty$ is everywhere bounded below by $4\epsilon$. Let $K_m$ denote the intersection of all Euclidean balls in $\mathbb{R}^3$ of radius $1/2\epsilon$ which contain $c_m$. Note that we may suppose that $c_m$ lies along the boundary of $K_m$ and that $K_m$ is contained in $C_m$. Note that, as $\epsilon$ tends to zero, $K_m$ converges towards the convex hull of $c_m$. Hence, for $\epsilon$ sufficiently small, $K_m\subset C_m$.

Since $K_m$ is $(2\epsilon)$-convex with respect to the Euclidian metric, we may suppose that it is $\epsilon$-convex with respect to $g_m$. Let $d_m:4\mathbb{B}^3\rightarrow\mathbb{R}$ denote the distance in $X$ to $K_m$ with respect to $g_m$. By Lemma \ref{lemma:ConvexityOfDistance}, we may suppose that this function is strictly convex. Recall that the restriction of any convex function to any minimal surface is subharmonic (the smooth case is proven in \cite{Jost2017}, and the general case follows from the fact that the supremum of a family of subharmonic functions is subharmonic. We refer the reader to \cite{Greene_Wu_Indiana,Greene_Wu_Ann.ENS} for details). It now follows by the maximum principle that $d_m$ attains its maximum over $S_m'$ along the boundary. However, by construction, $d_m$ vanishes over $c_m$, it thus vanishes over the whole of $S_m'$. It follows that
\begin{equation*}
S_m' \subseteq K_m \subseteq C_m\cap 2\mathbb{B}^3\ ,
\end{equation*}
and this completes the proof.
\end{proof}

A stronger uniqueness result may be obtained by restricting attention to minimal \emph{disks}. For all $p\in\mathbb{S}^2$, and for all $r$, let $c_{p,r}$ denote the geodesic circle of radius $r$ about $p$ in $(\mathbb{S}^2,\overline{g})$.
\begin{lemma}
\label{lemma:DegreeEqualsOne}
There exists $r_0>0$ such that, for all $p$, and for all $r<r_0$, $c_{p,r}$ bounds a unique minimal disk $D_{p,r}$ in $2\mathbb{B}^3$. Furthermore, upon reducing $r_0$ if necessary, we may suppose that $D_{p,r}$ is strictly stable.
\end{lemma}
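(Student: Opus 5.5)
Rescale about $p$ and reduce to Lemma~\ref{lemma:MinimalPerturbationExistence}. The new ingredient beyond that lemma is to exclude minimal disks spanning $c_{p,r}$ that leave $2\mathbb{B}^3$, or that remain in it but are not close to the flat disk. The text of Lemma~\ref{lemma:MinimalPerturbationExistence} already handles surfaces inside $2\mathbb{B}^3$. So the real task is to show that every minimal disk bounded by a small circle stays in a small ambient neighbourhood of $p$; after rescaling, that neighbourhood becomes $2\mathbb{B}^3$.

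\emph{Step 1 (blow-up).} Fix $p\in\mathbb{S}^2$ and $r$ small. Take $\overline g$-normal coordinates centred at $p$, rotated so that $T_p\mathbb{S}^2$ is the $x$--$y$ plane, and dilate by $1/r$. This gives metrics $g_{p,r}:=r^{-2}\Phi_{p,r}^*\overline g$ on $4\mathbb{B}^3$. As $r\to0$ they converge in $C^\infty$ to the Euclidean metric, uniformly in $p$ by compactness of $\mathbb{S}^2$. The rescaled curves $\tilde c_{p,r}$ converge in $C^{2,\alpha}$ to the unit circle in the $x$--$y$ plane, again uniformly in $p$, because $\mathbb{S}^2$ is smooth and its second fundamental form contributes only $O(r)$ after rescaling. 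Argue by contradiction: take sequences $p_m$ and $r_m\to0$ that violate the statement, and pass to a subsequence with $p_m\to p_\infty$. Lemma~\ref{lemma:MinimalPerturbationExistence} then gives, for large $m$, a unique minimal surface in $2\mathbb{B}^3$ bounded by $\tilde c_{p_m,r_m}$. This surface is embedded, strictly stable, and $C^{2,\alpha}$-close to the flat unit disk, so it is in particular a disk. This yields existence, strict stability, and uniqueness among disks contained in the rescaled ball $2\mathbb{B}^3$, i.e.\ in the $\overline g$-ball of radius about $2r$ around $p$.

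\emph{Step 2 (confinement).} The remaining point is that no minimal disk bounded by $c_{p,r}$ leaves this ball. Here I would use a convexity argument in the original metric rather than the rescaled one. Negative curvature is not needed, only that the injectivity radius and the curvature are bounded. For $r$ small, the function $\rho=d_{\overline g}(p,\cdot)^2$ is strictly convex on the geodesic ball $B_{\rho_0}(p)$ for a fixed $\rho_0$, by Hessian comparison. Its restriction to a minimal surface is therefore subharmonic, so any minimal surface contained in $B_{\rho_0}(p)$ and bounded by $c_{p,r}$ satisfies $\rho\le\max_{c_{p,r}}\rho\approx r^2$. This confines it to a ball of radius about $r(1+o(1))<2r$, which is exactly what is needed.

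The main obstacle is a minimal disk that is not a priori contained in $B_{\rho_0}(p)$ and wanders across the ball. I would rule this out with an area/monotonicity argument. The disk $D$ produced in Step~1 has area $O(r^2)$. A competing minimal disk $D'$ reaching a point at distance $\ge\rho_0/2$ from $c_{p,r}$ must, by the monotonicity formula (valid with bounded curvature), have area at least $c\rho_0^2$. This is not a contradiction by itself, since minimal disks need not minimize area. I would therefore rather apply the convex-hull/maximum principle globally: in the Cartan--Hadamard-type setting $\overline{\sect}\le-\kappa^2$ on the geodesically convex ball, $d(p,\cdot)^2$ is convex on the whole of $\closedthreeball$. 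White's maximum principle (Theorem~\ref{thm:White}) keeps $D'$ off $\mathbb{S}^2$, so $D'$ lies in the interior, where the subharmonicity argument applies globally. If geodesic convexity of all of $(\closedthreeball,\overline g)$ is not available, I would instead use the local $\epsilon$-convex fattening $K$ of Lemma~\ref{lemma:ConvexityOfDistance}, as in the proof of Lemma~\ref{lemma:MinimalPerturbationExistence}, combined with a continuity argument in $r$, to confine $D'$. This step is where I expect the real difficulty to lie.
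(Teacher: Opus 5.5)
Your Step~1 and your final local-convexity step (subharmonicity of $d(p,\cdot)^2$ on a small ball) agree with the paper. The gap is in Step~2: neither of your two routes confines an \emph{arbitrary} minimal disk $D'$ bounded by $c_{p,r}$ to a small neighbourhood of $p$.

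\emph{The global convexity route.} Convexity of $d(p,\cdot)^2$ on all of $\closedthreeball$ would require $(\closedthreeball,\overline g)$ to be geodesically convex, and this is not a hypothesis. $\sphere^2$ is only mean convex, and the ball is a compact manifold with boundary, so Cartan--Hadamard does not apply. Moreover, $p$ itself lies on $\sphere^2$.

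\emph{The fallback route.} It is circular. Lemma~\ref{lemma:ConvexityOfDistance} gives convexity of $d_K$ only within distance $R$ of $K$, so it controls only surfaces already known to be localised. The ``continuity in $r$'' has nothing to continue, since an arbitrary competitor $D'$ does not sit in any continuous family. Also, Theorem~\ref{thm:White} applies to stationary varifolds, which $D'$ is not, since its boundary lies on $\sphere^2$.

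\emph{The missing ingredient.} You need an a priori area bound valid for \emph{every} minimal disk bounded by $c_{p,r}$, not just for minimizers. You set this possibility aside, but Proposition~\ref{prop:area} supplies exactly this: $\opArea(D')\leq\opArea_h(D')\leq K_{c_{p,r}}-2\pi$, after normalising $\kappa=1$. Its proof is Gauss--Bonnet together with $\overline\sigma\leq-1$. This is precisely where negative curvature enters, contrary to your remark that it is not needed.

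The paper uses this bound as follows.
\begin{enumerate}
\item Take minimal disks $D_m$ bounded by $c_{p,r_m}$ and a weak varifold limit $V_\infty$.
\item $V_\infty$ is stationary because $\opLength(c_{p,r_m})\to 0$.
\item Monotonicity identifies $\opSupp(V_\infty)\setminus\{p\}$ with $D_\infty\setminus\{p\}$, where $D_\infty$ is the Hausdorff limit of the $D_m$.
\item White's principle, applied to $V_\infty$ at $p\in\sphere^2$, forces $D_\infty=\{p\}$.
\item Your local convexity argument then gives $D_m\subseteq B_{2r_m}(p)$.
\end{enumerate}

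There is also a more direct way to close your Step~2. Since $K_{c_{p,r}}\leq 2\pi+O(r^2)$, the area bound is $O(r^2)$. The monotonicity lower bound you already wrote down, applied in an extension of $\overline g$, then forces every point of $D'$ into an $O(r)$-neighbourhood of $c_{p,r}$. This avoids White's theorem altogether.
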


\begin{proof}To prove existence and strict stability, consider sequences $(r_m)_{m\in\mathbb{N}}$ and $(p_m)_{m\in\mathbb{N}}$ converging respectively $0$ and to some point $p_\infty\in\mathbb{S}^2$, say. For all $m$, consider the translated and rescaled curve
\begin{equation*}
c_m := \frac{1}{r_m}\big(c_{p_m,r_m} - p_m\big)\ ,
\end{equation*}
and consider the translated and rescaled metric
\begin{equation*}
\overline{g}_m(x) := \overline{g}_m\big(r_m\cdot(x+p_m)\big)\ .
\end{equation*}
The sequence $(\overline{g}_m)_{m\in\mathbb{N}}$ converges smoothly towards the Euclidian metric, whilst the sequence $(c_m)_{m\in\mathbb{N}}$ converges towards the unit circle in some plane. It follows by Lemma \ref{lemma:MinimalPerturbationExistence} that, for sufficiently large $m$, $c_m$ bounds a unique disk in $2\mathbb{B}^3$ which is minimal and strictly stable with respect to $\overline{g}_m$. Existence and strict stability now follow by invariance of minimality under rescaling.

To show uniqueness, by Lemma \ref{lemma:MinimalPerturbationExistence}, it suffices to show that, for sufficiently small $r$, any minimal disk in $(\closedthreeball,\overline{g})$ bounded by $c_{p,r}$ is contained in $B_{2r}(p)$.

Consider sequences $(p_m)_{m\in\mathbb{N}}\in\mathbb{S}^2$ and $(r_m)_{m\in\mathbb{N}}>0$ converging respectively to $p_\infty$ and $0$. In order to simplify notation in what follows, we will suppose that, for all $m$, $p_m=p_\infty=:p$. For all $m$, denote $c_m:=c_{p,r_m}$, and let $D_m$ be a minimal disk in $(\closedthreeball,\overline{g})$ bounded by $c_m$. Upon extracting a subsequence, we suppose that $(D_m)_{m\in\mathbb{N}}$ Hausdorff converges towards some limit set $D_\infty$, say, in $\closedthreeball$. Note that $D_\infty$ is connected and contains $p$.

We will show that $D_\infty=\{p\}$. To this end, we apply the theory of varifolds introduced  in \S \ref{ss.gmt_varifouille}. For all finite $m$, let $V_m$ denote the associated varifold of $D_m$ and let $M(V_m)$ denote its mass. By Proposition \ref{prop:area}, for all $m$,
\begin{equation*}
M(V_m) \leq K_m-2\pi\ ,
\end{equation*}
where $K_m$ here denotes the total curvature of $c_m$. Hence
\begin{equation*}
\limsup_{m\rightarrow\infty}M(V_m) \leq 2\pi + \limsup_{m\rightarrow\infty} K_m < \infty\ .
\end{equation*}
The sequence $(V_m)_{m\in\mathbb{N}}$ thus has uniformly bounded mass, and we may therefore assume that it converges weakly to some limit $V_\infty$, say.

We now claim that $V_\infty$ is stationary with respect to $\overline{g}$. Indeed, for any vector field $X$ the first variation of $V_\infty$ with respect to $X$ satisfies (see \S \ref{ss.gmt_varifouille})
\begin{equation*}
\delta V_\infty(X) = \lim_{m\rightarrow\infty}\delta V_m(X)\ .
\end{equation*}
However by minimality, for all $m$, the first variation formula \eqref{eqn:FirstVariationOfDisk} yields
\begin{equation*}
\delta V_m(X) = \int_{c_m}\langle X,n_m\rangle\dd\ell\ ,
\end{equation*}
where here $n_m$ denotes the outward-pointing, unit conormal vector field over $c_m=\partial D_m$. Hence
\begin{equation*}
\left|\delta V_\infty(X)\right| = \lim_{m\rightarrow\infty}\left|\delta V_m(X)\right| \leq \limsup_{m\rightarrow\infty}\opLength(c_m)\cdot\|X\|_{C^0} = 0\ ,
\end{equation*}
so that $V_\infty$ is stationary, as asserted.

Since $(c_m)_{m\in\mathbb{N}}$ converges in the Hausdorff sense to $\{p\}$, it follows by the monotonicity formula that
\begin{equation*}
\opSupp(V_\infty)\setminus\{p\} = D_\infty\setminus\{p\}\ .
\end{equation*}
Thus, if $D_\infty\neq\{p\}$ then $V_\infty$ is a non-trivial stationary varifold which is an interior tangent to $\mathbb{S}^2$ at $p$. Since $\mathbb{S}^2$ is mean convex, and since, in particular, $V_\infty$ is stationary with respect to every \emph{admissible} vector field, this is absurd by White's geometric maximum principle (Theorem \ref{thm:White}).

Finally, let $d:\closedthreeball\rightarrow\Bbb{R}$ denote the distance to $p$ with respect to $\overline{g}$, and let $r_0>0$ be such that $d$ is convex over $B_{r_0}(p)$. By the above discussion, for sufficiently large $m$, $D_m\subseteq B_{r_0}(p)$. As in the proof of Lemma \ref{lemma:MinimalPerturbationExistence}, the restriction of any convex function to any minimal surface is subharmonic, and it follows by the maximum principle that $d$ attains its maximum over $D_m$ along its boundary $\partial D_m=c_m$. However, for sufficiently large $m$, $c_m\subseteq B_{2r_m}(p)$, so that $D_m\subseteq B_{2r_m}(p)$, and this completes the proof.
\end{proof}

\begin{proof}[Proof of Theorem \ref{thm:MainResultA}]It suffices to apply the $\mathbb{Z}$-valued topological mapping degree developed by White in \cite{White1987}. We first recall its construction. Let $\mathcal{C}$ denote the set of $C^{2,\alpha}$ simple closed curves in $\sphere^2$, let $\mathcal{D}$ denote the space of $C^{2,\alpha}$ embedded minimal disks in $\closedthreeball$ with boundary in $\sphere^2$, and let $\partial:\mathcal{D}\rightarrow\mathcal{C}$ denote the map which sends each disk to its boundary.

We first show that $\partial$ is proper. To this end, let $X$ be a compact subset of $\mathcal{C}$. By Proposition \ref{prop:area}, there exists $C>0$ such that, for all $D\in\partial^{-1}(X)$,
\begin{equation*}
\opArea(D) \leq C\ .
\end{equation*}
It follows by Theorem $0$ of \cite{White1987} that $\partial^{-1}(X)$ is compact, and properness of $\partial$ follows.

Since $\partial$ is proper, it follows by \cite{White1991} that it has a well-defined $\mathbb{Z}$-valued differential topological mapping degree which is characterized as follows. Regular points of $\partial$ are embedded minimal disks with invertible Jacobi operators, and regular values are thus curves $c\in\mathcal{C}$ which only bound such embedded minimal disks. By invertibility, for any regular value $c$, $\partial^{-1}(\{c\})$ is discrete and, by compactness, it is finite. Finally, given an embedded minimal disk $D\subseteq(\closedthreeball,\overline{g})$, we define its signature by
\begin{equation*}
\label{eqn:ContributionToDegree}
\sigma(D) := (-1)^{\opMI(D)}\ ,
\end{equation*}
where here $\opMI(D)$ denotes the Morse Index of its Jacobi operator. The degree of $\partial$ then satisfies
\begin{equation*}
\opDeg(\partial) = \sum_{D\in\partial^{-1}(c)}\sigma(D)\ .
\end{equation*}
The content of \cite{White1991} is to show that this degree does not depend on the regular value chosen.

By Lemma \ref{lemma:DegreeEqualsOne}, for all $p$, and for all sufficiently small $r$, the geodesic circle $c_{p,r}$ is a regular value of $\partial$, and $\partial^{-1}(\{c_{p,r}\})$ consists of a single element which, in addition, is strictly stable. It follows that
\begin{equation*}
\opDeg(\partial) = 1\ .
\end{equation*}
Now let $c$ be a smooth, simple, closed curve in $\sphere^2$ such that
\begin{equation*}
K_c \leq 4\pi\ .
\end{equation*}
By Theorem \ref{thm:Nitsche}, every minimal disk $D$ bounded by $c$ is strictly stable and satisfies
\begin{equation*}
\sigma(D) = 1\ .
\end{equation*}
It follows that $c$ is a regular value of $\partial$ and, furthermore,
\begin{equation*}
\#\partial^{-1}(c) = \sum_{D\in\partial^{-1}(\{c\})}1 = \opDeg(\partial) = 1\ .
\end{equation*}
Existence and uniqueness follows, and this completes the proof.
\end{proof}

\bibliographystyle{alpha}
\bibliography{references}

\begin{thebibliography}{EWW02}

\bibitem[ALLS26]{AlvarezLefeuvreLoweSmith}
S.~Alvarez, T.~Lefeuvre, B.~Lowe, and G.~Smith.
\newblock Foliated plateau problems, surface radon transforms, and boundary
  area rigidity in dimension three.
\newblock {\em Preprint}, 2026.

\bibitem[AM10]{alexakis2010renormalized}
S.~Alexakis and R.~Mazzeo.
\newblock Renormalized area and properly embedded minimal surfaces in
  hyperbolic 3-manifolds.
\newblock {\em Comm. Math. Phys.}, 297(3):621--651, 2010.

\bibitem[And83]{Anderson83}
M.~Anderson.
\newblock Complete minimal hypersurfaces in hyperbolic {$n$}-manifolds.
\newblock {\em Comment. Math. Helv.}, 58(2):264--290, 1983.

\bibitem[B\"82]{Bohme82}
R.~B\"ohme.
\newblock New results on the classical problem of {P}lateau. {O}n the existence
  of many solutions.
\newblock In {\em Bourbaki {S}eminar, {V}ol. 1981/1982}, volume 92-93 of {\em
  Ast\'erisque}, pages 1--20. Soc. Math. France, Paris, 1982.

\bibitem[BdC80]{BarbosaDoCarmo1980}
J.~L. Barbosa and M.~do~Carmo.
\newblock Stability of minimal surfaces and eigenvalues of the laplacian.
\newblock {\em Math. Z.}, 173:13--28, 1980.

\bibitem[Bes87]{Besse1987}
A.~Besse.
\newblock {\em Einstein Manifolds}, volume~10 of {\em Ergebnisse der Mathematik
  und ihrer Grenzgebiete}.
\newblock Springer-Verlag, 1987.

\bibitem[CE08]{CheegerEbin2008}
J.~Cheeger and D.~G. Ebin.
\newblock {\em Comparison theorems in {R}iemannian geometry}.
\newblock AMS Chelsea Publishing, Providence, RI, 2008.
\newblock Revised reprint of the 1975 original.

\bibitem[CGH00]{Calderbank_Gauduchon_Herzlich}
D.~Calderbank, P.~Gauduchon, and M.~Herzlich.
\newblock Refined {K}ato inequalities and conformal weights in {R}iemannian
  geometry.
\newblock {\em J. Funct. Anal.}, 173(1):214--255, 2000.

\bibitem[CM11]{ColdingMinicozzi2011}
T.~H. Colding and W.~P. Minicozzi.
\newblock {\em A Course in Minimal Surfaces}.
\newblock Number 121 in Graduate Studies in Mathematics. American Mathematical
  Society, 2011.

\bibitem[EWW02]{EkholmWhiteWienholtz2002}
T.~Ekholm, B.~White, and D.~Wienholtz.
\newblock Embeddedness of minimal surfaces with total boundary curvature at
  most {$4\pi$}.
\newblock {\em Ann. of Math. (2)}, 155(1):209--234, 2002.

\bibitem[Gro91a]{Gromov-91-1}
M.~Gromov.
\newblock Foliated {P}lateau problem. {I}. {M}inimal varieties.
\newblock {\em Geom. Funct. Anal.}, 1(1):14--79, 1991.

\bibitem[Gro91b]{Gromov-91-2}
M.~Gromov.
\newblock Foliated {P}lateau problem. {II}. {H}armonic maps of foliations.
\newblock {\em Geom. Funct. Anal.}, 1(3):253--320, 1991.

\bibitem[GT01]{GilbargTrudinger2001}
D.~Gilbarg and N.~S. Trudinger.
\newblock {\em Elliptic partial differential equations of second order}.
\newblock Classics in Mathematics. Springer-Verlag, Berlin, 2001.
\newblock Reprint of the 1998 edition.

\bibitem[GW79]{Greene_Wu_Ann.ENS}
R.~E. Greene and H.~Wu.
\newblock {$C\sp{\infty }$}\ approximations of convex, subharmonic, and
  plurisubharmonic functions.
\newblock {\em Ann. Sci. \'Ecole Norm. Sup. (4)}, 12(1):47--84, 1979.

\bibitem[GW73]{Greene_Wu_Indiana}
R.~E. Greene and H.~Wu.
\newblock On the subharmonicity and plurisubharmonicity of geodesically convex
  functions.
\newblock {\em Indiana Univ. Math. J.}, 22:641--653, 1972/73.

\bibitem[HLS26]{HuangLoweSeppi26}
Z.~Huang, B.~Lowe, and A.~Seppi.
\newblock Uniqueness and non-uniqueness for the asymptotic {P}lateau problem in
  hyperbolic space.
\newblock {\em Proc. Lond. Math. Soc. (3)}, 132(1):Paper No. e70121, 31, 2026.

\bibitem[Jos17]{Jost2017}
J.~Jost.
\newblock {\em Riemannian Geometry and Geometric Analysis}.
\newblock Universitext. Springer, Cham, 7th edition, 2017.

\bibitem[Kat72]{Kato}
T.~Kato.
\newblock Schr\"odinger operators with singular potentials.
\newblock {\em Israel J. Math.}, 13:135--148, 1972.

\bibitem[LJ94]{Li-Jost}
X.~Li-Jost.
\newblock Uniqueness of minimal surfaces in {E}uclidean and hyperbolic
  {$3$}-space.
\newblock {\em Math. Z.}, 217(2):275--285, 1994.

\bibitem[Low21]{lowe2021deformations}
B.~Lowe.
\newblock Deformations of totally geodesic foliations and minimal surfaces in
  negatively curved 3-manifolds.
\newblock {\em Geom. Funct. Anal.}, 31(4):895--929, 2021.

\bibitem[MK25]{marx2024inverse}
J.~Marx-Kuo.
\newblock An inverse problem for renormalized area: determining the bulk metric
  with minimal surfaces.
\newblock {\em New Zealand J. Math.}, 56:69--124, 2025.

\bibitem[MY82]{MeeksYau1982}
W.~M. Meeks and S.-T. Yau.
\newblock The existence of embedded minimal surfaces and the problem of
  uniqueness.
\newblock {\em Math. Z.}, 179:151--168, 1982.

\bibitem[Nit68]{Nitsche68}
J.~C.~C. Nitsche.
\newblock Contours bounding at least three solutions of {P}lateau's problem.
\newblock {\em Arch. Rational Mech. Anal.}, 30:1--11, 1968.

\bibitem[Nit73]{Nitsche1973}
J.~C.~C. Nitsche.
\newblock A new uniqueness theorem for minimal surfaces.
\newblock {\em Arch. Rational Mech. Anal.}, 52:319--329, 1973.

\bibitem[Nit89]{Nitsche_book}
J.~C.~C. Nitsche.
\newblock {\em Lectures on minimal surfaces. {V}ol. 1}.
\newblock Cambridge University Press, Cambridge, 1989.
\newblock Introduction, fundamentals, geometry and basic boundary value
  problems, Translated from the German by Jerry M. Feinberg, With a German
  foreword.

\bibitem[Ros93]{Rosenberg1993}
H.~Rosenberg.
\newblock Hypersurfaces of constant curvature in space forms.
\newblock {\em Bull. Sci. Math.}, 117(2):211--239, 1993.

\bibitem[Sep16]{Seppi16}
A.~Seppi.
\newblock Minimal discs in hyperbolic space bounded by a quasicircle at
  infinity.
\newblock {\em Comment. Math. Helv.}, 91(4):807--839, 2016.

\bibitem[Sim68]{Simons68}
J.~Simons.
\newblock Minimal varieties in riemannian manifolds.
\newblock {\em Ann. of Math. (2)}, 88:62--105, 1968.

\bibitem[Sim83]{simon1983lectures}
L.~Simon.
\newblock {\em Lectures on geometric measure theory}, volume~3 of {\em
  Proceedings of the Centre for Mathematical Analysis, Australian National
  University}.
\newblock Australian National University, Centre for Mathematical Analysis,
  Canberra, 1983.

\bibitem[Som04]{Som04}
T.~Soma.
\newblock Existence of least area planes in hyperbolic 3-space with co-compact
  metric.
\newblock {\em Topology}, 43(3):705--716, 2004.

\bibitem[Som05]{soma05}
T.~Soma.
\newblock Least area planes in gromov hyperbolic 3-spaces with co-compact
  metric.
\newblock {\em Geometriae Dedicata}, 112(1):123--128, 2005.

\bibitem[Spi79]{spivak1979comprehensive3}
M.~Spivak.
\newblock {\em A Comprehensive Introduction to Differential Geometry, Volume
  III}, volume~3.
\newblock Publish or Perish, Inc., Wilmington, Delaware, 2nd edition, 1979.

\bibitem[Uhl83]{Uhlenbeck83}
K.~Uhlenbeck.
\newblock Closed minimal surfaces in hyperbolic {$3$}-manifolds.
\newblock In {\em Seminar on minimal submanifolds}, volume 103 of {\em Ann. of
  Math. Stud.}, pages 147--168. Princeton Univ. Press, Princeton, NJ, 1983.

\bibitem[Whi87]{White1987}
B.~White.
\newblock Curvature estimates and compactness theorems in $3$-manifolds for
  surfaces that are stationary for parametric elliptic functionals.
\newblock {\em Invent. Math.}, 88:243--256, 1987.

\bibitem[Whi91]{White1991}
B.~White.
\newblock The space of minimal submanifolds for varying riemannian metrics.
\newblock {\em Indiana Univ. Math. J.}, 40(1):161--200, 1991.

\bibitem[Whi10]{white2010maximum}
B.~White.
\newblock The maximum principle for minimal varieties of arbitrary codimension.
\newblock {\em Comm. Anal. Geom.}, 18(3):421--432, 2010.

\end{thebibliography}
\end{document}